\def\H{\widetilde{H}}
\def\R{{\mathbb R}}
\def\N{{\mathbb N}}
\def\ba{\boldsymbol{a}}
\def\bb{\boldsymbol{b}}
\def\bc{\boldsymbol{c}}
\def\bv{\boldsymbol{v}}
\def\bu{\boldsymbol{u}}
\def\bE{\boldsymbol{E}}
\def\bw{\boldsymbol{w}}
\def\bm{\boldsymbol{m}}
\def\bxi{\boldsymbol{\xi}}
\def\bH{\boldsymbol{H}}
\def\bphi{\boldsymbol{\phi}}
\def\bpsi{\boldsymbol{\psi}}
\def\bn{\boldsymbol{n}}
\def\btau{\boldsymbol{\tau}}
\def\NN{{\mathcal N}}
\def\DD{{\mathcal D}}
\def\KK{{\mathcal K}}
\def\PP{{\mathcal P}}
\def\SS{{\mathcal S}}
\def\TT{{\mathcal T}}
\def\XX{{\mathcal X}}
\def\Ltwo#1{{\mathbb L}^{2}{(#1})}
\def\L{{\mathbb{L}}}
\def\H{{\mathbb{H}}}
\def\Hone#1{{\mathbb H}^{1}{(#1})}
\def\Hcurl#1{{\mathbb H}({\rm curl},#1)}
\def\dive{{\rm div}}
\def\norm#1#2{\|#1\|_{#2}}
\def\set#1#2{\big\{#1\,:\,#2\big\}}
\def\eps{\varepsilon}
\def\slp{\mathfrak{V}} 
\def\dlp{\mathfrak{K}} 
\def\hyp{\mathfrak{W}} 
\def\dtn{\mathfrak{S}} 
\def\mfrac#1#2{\mbox{$\frac{#1}{#2}$}}
\def\normL2#1#2{\|#1\|_{L^2(#2)}}
\newcommand{\dual}[3][]{#1\langle#2\,,\,#3#1\rangle}
\newtheorem{theorem}{Theorem}
\newtheorem{lemma}[theorem]{Lemma}
\newtheorem{algorithm}[theorem]{Algorithm}
\newtheorem{definition}[theorem]{Definition}
\newtheorem{remark}[theorem]{Remark}
\numberwithin{equation}{section}
\begin{document}
\title{
The Eddy Current--LLG Equations--Part I: FEM-BEM Coupling
}
\author{Michael Feischl}
\address{School of Mathematics and Statistics,
         The University of New South Wales,
         Sydney 2052, Australia}
\email{m.feischl@unsw.edu.au}

\author{Thanh Tran}
\address{School of Mathematics and Statistics,
         The University of New South Wales,
         Sydney 2052, Australia}
\email{thanh.tran@unsw.edu.au}
\thanks{Supported by the Australian Research Council under
grant numbers DP120101886 and DP160101755}

\subjclass[2000]{Primary 35Q40, 35K55, 35R60, 60H15, 65L60,
65L20, 65C30; Secondary 82D45}
\keywords{Landau--Lifshitz--Gilbert equation, eddy current,
finite element, boundary element, coupling, a priori error
estimates, ferromagnetism}

\date{\today}

\begin{abstract}
We analyse a numerical method for the coupled
system of the eddy current equations in $\R^3$ with the
Landau-Lifshitz-Gilbert equation in a bounded domain.
The unbounded domain is discretised by means of
finite-element/boundary-element coupling. Even though the
considered problem is strongly nonlinear, the numerical approach is
constructed such that only two linear systems per time step have to
be solved. In this first part of the paper,
we prove unconditional weak convergence (of a
subsequence) of the finite-element solutions
towards a weak solution. A priori error estimates 
will be presented in the second part.
\end{abstract}
\maketitle
\section{Introduction}
This paper deals with the coupling of finite element and boundary
element methods to solve the system of the eddy current
equations in
the whole 3D spatial space and the Landau-Lifshitz-Gilbert equation
(LLG), the so-called ELLG system or equations. The system is also called the
quasi-static Maxwell-LLG (MLLG) system.

The LLG is widely considered as a valid model of micromagnetic
phenomena occurring in, e.g., magnetic sensors, recording heads,
and magneto-resistive storage device~\cite{Gil,LL,prohl2001}. 
Classical results concerning existence and non-uniqueness of solutions can
be found in~\cite{as,vis}. In a ferro-magnetic material, magnetisation is
created or affected by external electro-magnetic fields. It is therefore
necessary to augment the Maxwell system with the LLG, which describes the
influence of ferromagnet; see e.g.~\cite{cim, KruzikProhl06, vis}.
Existence, regularity and local uniqueness for the MLLG equations are studied
in~\cite{CimExist}.

Throughout the literature, there are various works on numerical
approximation methods for the LLG, ELLG, and MLLG 
equations~\cite{alouges,alok,bako,bapr,cim,ellg,thanh} (the list is not
exhausted),
and even with the full Maxwell system on bounded
domains~\cite{BBP,MLLG}, and in the whole~$\R^3$~\cite{CarFab98}.
Originating from the seminal
work~\cite{alouges}, the recent works~\cite{ellg,thanh} consider a
similar numeric integrator for a bounded domain.

This work studies the ELLG equations where we 
consider the electromagnetic field on the whole $\R^3$ and do not need to
introduce artificial boundaries. 
Differently from~\cite{CarFab98} where the Faedo-Galerkin method is
used to prove existence of weak solutions,
we extend the analysis for the integrator used in~\cite{alouges, ellg,
thanh} to a
finite-element/boundary-element (FEM/BEM) discretisation of the
eddy current part on $\R^3$. This is inspired by the FEM/BEM coupling
approach designed for the pure eddy current problem
in~\cite{dual}, which
allows to treat unbounded domains without introducing artificial
boundaries. Two approaches are proposed in~\cite{dual}: the
so-called ``magnetic (or $\bH$-based)
approach'' which eliminates the electric
field, retaining only the magnetic field as the unknown in the
system, and the ``electric (or $\bE$-based)
approach'' which considers a
primitive of the electric field as the only unknown. The
coupling of the eddy-current system with the LLG dictates that
the first approach is more appropriate; see~\eqref{eq:strong}.

The main result of this first part is the weak convergence of the 
discrete approximation towards a weak solution without any
condition on the space and time discretisation. This also proves
the existence of weak solutions.

The remainder of this part is organised as follows.
Section~\ref{section:model} introduces the coupled problem and the
notation, presents the numerical algorithm, and states the main
result of this part of
the paper. Section~\ref{sec:pro} is devoted to the proof of
this main result. Numerical results are presented in
Section~\ref{section:numerics}. The second part of this paper~\cite{FeiTraII} proves a~priori estimates for the proposed algorithm.

 \section{Model Problem \& Main Result}\label{section:model}
 \subsection{The problem}\label{subsec:pro}
 Consider a bounded Lipschitz domain $D\subset \R^3$ with connected
boundary $\Gamma$ having the outward normal vector $\bn$.
We define $D^\ast:=\R^3\setminus\overline D$,
$D_T:=(0,T)\times D$, $\Gamma_T := (0,T)\times\Gamma$, 
$D_T^{\ast}:=(0,T)\times D^\ast$,
and $\R^3_T := (0,T)\times\R^3$ for $T>0$.
 We start with the quasi-static approximation of the full Maxwell-LLG system from~\cite{vis} which reads as
\begin{subequations}\label{eq:strong}
 \begin{alignat}{2}
  \bm_t - \alpha\bm\times\bm_t &= -\bm \times \bH_{\rm eff}
 &&\quad\text{in }D_T,\label{eq:llg}\\
\sigma\bE -\nabla\times\bH&=0&&\quad\text{in }\R^3_T,\label{eq:MLLG1}\\
\mu_0\bH_t +\nabla \times\bE &=-\mu_0\widetilde\bm_t&&\quad\text{in }\R^3_T,\label{eq:MLLG2}\\
{\rm div}(\bH+\widetilde\bm)&=0 &&\quad\text{in }\R^3_T,\label{eq:MLLG3}\\
{\rm div}(\bE)&=0&&\quad\text{in }D^\ast_T,
\end{alignat}
\end{subequations}
where $\widetilde \bm$ is
the zero extension of $\bm$ to $\R^3$ 
and $\bH_{\rm eff}$ is the
effective field defined by $\bH_{\rm eff}= C_e\Delta\bm+\bH$ for
some constant $C_e>0$.
Here the parameter $\alpha>0$ and permability $\mu_0\geq0$ are
constants, whereas the conductivity $\sigma$ takes a constant
positive value in
$D$ and the zero value in $D^\ast$.
Equation~\eqref{eq:MLLG3} is understood in the
distributional sense because there is a jump of~$\widetilde\bm$
across~$\Gamma$.

It follows from~\eqref{eq:llg} that $|\bm|$ is constant. We
follow the usual practice to normalise~$|\bm|$ (and thus
the same condition is required for $|\bm^0|$).
The following conditions are imposed on the solutions
of~\eqref{eq:strong}:
\begin{subequations}\label{eq:con}
\begin{alignat}{2}
\partial_n\bm&=0
&& \quad\text{on }\Gamma_T,\label{eq:con1} 
\\
|\bm| &=1
&& \quad\text{in } D_T, \label{eq:con2}
\\
\bm(0,\cdot) &= \bm^0
&& \quad\text{in } D, \label{eq:con3}
\\
\bH(0,\cdot) &= \bH^0
&& \quad\text{in } \R^3,
\\
\bE(0,\cdot) &= \bE^0
&& \quad\text{in } \R^3,
\\
|\bH(t,x)|&=\mathcal{O}(|x|^{-1})
&& \quad\text{as }|x|\to \infty,
\end{alignat}
\end{subequations}
where $\partial_n$ denotes the normal derivative.
The initial data~$\bm^0$ and~$\bH^0$ satisfy~$|\bm^0|=1$ in~$D$ and
\begin{align}\label{eq:ini}
\begin{split}
 {\rm div}(\bH^0 + \widetilde \bm^0)&=0\quad\text{in }\R^3.
\end{split}
\end{align}


Below, we focus on an $\bH$-based formulation of the problem. 
It is possible to recover $\bE$ once $\bH$ and $\bm$ are known;
see~\eqref{eq:el}

\subsection{Function spaces and notations}\label{subsec:fun spa}
Before introducing the concept of weak solutions
to problem~\eqref{eq:strong}--\eqref{eq:con} 
we need the following definitions of function
spaces. Let $\Ltwo{D}:=L^2(D;\R^3)$ and $\Hcurl{D}:=\set{\bw\in
\Ltwo{D}}{\nabla\times\bw\in\Ltwo{D}}$.
We define $H^{1/2}(\Gamma)$ as the usual trace space of $H^1(D)$ and define its dual space $H^{-1/2}(\Gamma)$ by extending
the $L^2$-inner product on $\Gamma$.
For convenience we denote
\[
\XX:=\set{(\bxi,\zeta)\in\Hcurl{D}\times
H^{1/2}(\Gamma)}{\bn\times\bxi|_\Gamma =\bn\times \nabla_\Gamma\zeta
\text{ in the sense of traces}}.
\]
Recall that~$\bn\times\bxi|_\Gamma$ is the tangential trace (or
twisted tangential trace) of~$\bxi$, and $\nabla_\Gamma\zeta$ is the
surface gradient of~$\zeta$. Their definitions and properties can
be found in~\cite{buffa, buffa2}.

Finally, if $X$ is a normed vector space then $L^2(0,T;X)$,
$H^m(0,T;X)$, and $W^{m,p}(0,T;X)$ 
denote the usual corresponding Lebesgues and Sobolev spaces of functions
defined on $(0,T)$ and taking values in $X$.

We finish this subsection with the clarification of the meaning of the
cross product between different mathematical objects.
For any vector functions $\bu, \bv, \bw$ 
we denote
\begin{gather*}
\bu\times\nabla\bv
:=
\left(
\bu\times\frac{\partial\bv}{\partial x_1},
\bu\times\frac{\partial\bv}{\partial x_2},
\bu\times\frac{\partial\bv}{\partial x_3}
\right), 
\quad
\nabla\bu\times\nabla\bv
:=
\sum_{i=1}^3
\frac{\partial\bu}{\partial x_i}
\times
\frac{\partial\bv}{\partial x_i} 
\\
\intertext{and}
(\bu\times\nabla\bv)\cdot \nabla\bw
:=
\sum_{i=1}^3
\left(
\bu\times\frac{\partial\bv}{\partial x_i}
\right)
\cdot \frac{\partial\bw}{\partial x_i}.
\end{gather*}

\subsection{Weak solutions}\label{subsec:wea sol}
A weak formulation for~\eqref{eq:llg} is well-known, see
e.g.~\cite{alouges, thanh}. 
Indeed, by multiplying~\eqref{eq:llg} by~$\bphi\in
C^\infty(D_T;\R^3)$, using~$|\bm|=1$ and integration by parts, we
deduce
\[
\alpha\dual{\bm_t}{\bm\times\bphi}_{D_T} 
+
\dual{\bm\times\bm_t}{\bm\times\bphi}_{D_T}+C_e 
\dual{\nabla\bm}{\nabla(\bm\times\bphi)}_{D_T}
=
\dual{\bH}{\bm\times\bphi}_{D_T}.
\]

To tackle the eddy current equations on $\R^3$, we aim to employ
FE/BE coupling methods.
To that end, we employ the \emph{magnetic} approach
from~\cite{dual}, which eventually results in a variant of the
\emph{Trifou}-discretisation of the eddy-current Maxwell equations.
The magnetic approach is more or less mandatory in our case, since
the coupling with the LLG equation requires the magnetic field
rather than the electric field. 

Multiplying~\eqref{eq:MLLG2} by~$\bxi\in C^{\infty}(D,\R^3)$
satisfying~$\nabla\times\bxi=0$ in~$D^\ast$, integrating over~$\R^3$,
and using integration by parts, we obtain for almost
all~$t\in[0,T]$
\[
\mu_0
\dual{\bH_t(t)}{\bxi}_{\R^3}
+
\dual{\bE(t)}{\nabla\times\bxi}_{\R^3}
=
-\mu_0
\dual{\bm_t(t)}{\bxi}_{D}.
\]
Using~$\nabla\times\bxi=0$ in~$D^\ast$ and~\eqref{eq:MLLG1} we deduce
\[
\mu_0
\dual{\bH_t(t)}{\bxi}_{\R^3}
+
\sigma^{-1}\dual{\nabla\times\bH(t)}{\nabla\times\bxi}_{D}
=
-\mu_0
\dual{\bm_t(t)}{\bxi}_{D}.
\]
Since~$\nabla\times\bH=\nabla\times\bxi=0$ in~$D^\ast$, 
there exists~$\varphi$ and~$\zeta$ such
that~$\bH=\nabla\varphi$ and~$\bxi=\nabla\zeta$ in~$D^\ast$. 
Therefore, the above equation can be rewritten as
\[
\mu_0
\dual{\bH_t(t)}{\bxi}_{D}
+
\mu_0
\dual{\nabla\varphi_t(t)}{\nabla\zeta}_{D^\ast}
+
\sigma^{-1}\dual{\nabla\times\bH(t)}{\nabla\times\bxi}_{D}
=
-\mu_0
\dual{\bm_t(t)}{\bxi}_{D}.
\]
Since~\eqref{eq:MLLG3} implies~$\dive(\bH)=0$ in~$D^\ast$,
we have~$\Delta\varphi=0$ in~$D^\ast$, so that
(formally)~$\Delta\varphi_t=0$ in~$D^\ast$. Hence
integration by parts yields
\begin{equation}\label{eq:Ht}
\mu_0
\dual{\bH_t(t)}{\bxi}_{D}
-
\mu_0
\dual{\partial_n^+\varphi_t(t)}{\zeta}_{\Gamma}
+
\sigma^{-1}\dual{\nabla\times\bH(t)}{\nabla\times\bxi}_{D}
=
-\mu_0
\dual{\bm_t(t)}{\bxi}_{D},
\end{equation}
where~$\partial_n^+$ is the exterior Neumann trace operator with
the limit taken from~$D^\ast$. The advantage of the above
formulation is that no integration over the unbounded domain~$D^\ast$ is
required.
The exterior Neumann trace~$\partial_n^+\varphi_t$ can be computed
from the exterior Dirichlet trace~$\lambda$ of~$\varphi$ by using the
Dirichlet-to-Neumann operator~$\dtn$, which is defined as follows.

Let $\gamma^-$ be the interior Dirichlet trace
operator and $\partial_n^-$ be the interior normal derivative
or Neumann trace operator.
(The $-$ sign indicates the trace is taken from $D$.)
Recalling the fundamental solution of the Laplacian
$G(x,y):=1/(4\pi|x-y|)$, we introduce the following integral operators
defined formally on $\Gamma$ as 
\begin{align*}
\slp(\lambda):=\gamma^-\overline\slp(\lambda),
\quad
\dlp(\lambda):=\gamma^-\overline\dlp(\lambda)+\mfrac12,
\quad\text{and}\quad
\hyp(\lambda):=-\partial_n^-\overline\dlp(\lambda),
\end{align*}
where, for $x\notin\Gamma$,
\begin{align*}
\overline\slp(\lambda)(x)
:=
\int_{\Gamma} G(x,y) \lambda(y)\,ds_y
\quad\text{and}\quad 
\overline\dlp(\lambda)(x)
:=\int_{\Gamma} \partial_{n(y)}G(x,y)\lambda(y)\,ds_y.
\end{align*}
Moreover, let $\dlp^\prime$ denote the adjoint operator of $\dlp$
with respect to the extended $L^2$-inner product.
Then the exterior Dirichlet-to-Neumann map $\dtn\colon
H^{1/2}(\Gamma)\to H^{-1/2}(\Gamma)$ can be represented as
\begin{equation}\label{eq:dtn}
\dtn 
= 
- \slp^{-1}(1/2-\dlp).
\end{equation}
Another more symmetric representation is
\begin{equation}\label{eq:dtn2}
\dtn 
=
-(1/2-\dlp^\prime) \slp^{-1}(1/2-\dlp)-\hyp.
\end{equation}

Recall that~$\varphi$ satisfies~$\bH=\nabla\varphi$ in~$D^\ast$.
We can choose~$\varphi$ satisfying~$\varphi(x)=O(|x|^{-1})$ as
$|x|\to\infty$.
Now if~$\lambda=\gamma^+\varphi$ 
then~$\lambda_t=\gamma^+\varphi_t$.
Since~$\Delta\varphi=\Delta\varphi_t=0$ in~$D^\ast$, and since the
exterior Laplace problem has a unique solution 
we
have~$\dtn\lambda=\partial_n^+\varphi$
and~$\dtn\lambda_t=\partial_n^+\varphi_t$.
Hence~\eqref{eq:Ht} can be rewritten as
\begin{equation}\label{eq:H lam t}
\dual{\bH_t(t)}{\bxi}_{D}
-
\dual{\dtn\lambda_t(t)}{\zeta}_{\Gamma}
+
\mu_0^{-1}
\sigma^{-1}\dual{\nabla\times\bH(t)}{\nabla\times\bxi}_{D}
=
-
\dual{\bm_t(t)}{\bxi}_{D}.
\end{equation}
We remark that if~$\nabla_\Gamma$ denotes the surface gradient
operator on~$\Gamma$ then it is well-known that
$
\nabla_\Gamma\lambda
=
(\nabla\varphi)|_{\Gamma}
-
(\partial_n^+\varphi)\bn
=
\bH|_{\Gamma}
-
(\partial_n^+\varphi)\bn;
$
see e.g.~\cite[Section~3.4]{Monk03}. Hence~$\bn\times\nabla_\Gamma\lambda =
\bn\times\bH|_{\Gamma}$.

The above analysis prompts us to define the following
weak formulation.

\begin{definition}\label{def:fembemllg}
A triple $(\bm,\bH,\lambda)$ satisfying
\begin{align*}
\bm &\in \Hone{D_T}
\quad\text{and}\quad
\bm_t|_{\Gamma_T} \in L^2(0,T;H^{-1/2}(\Gamma)),
\\
\bH &\in L^2(0,T;\Hcurl{D})\cap H^1(0,T;\Ltwo{D}), 
\\
\lambda &\in H^1(0,T;H^{1/2}(\Gamma))
\end{align*}
is called a weak solution to~\eqref{eq:strong}--\eqref{eq:con}
if the following statements hold 
 \begin{enumerate}
  \item $|\bm|=1$ almost everywhere in $D_T$; \label{ite:1}

  \item $\bm(0,\cdot)=\bm^0$, $\bH(0,\cdot)=\bH^0$, and 
$\lambda(0,\cdot)=\gamma^+ \varphi^0$ where~$\varphi^0$ is a scalar
function satisfies $\bH^0=\nabla\varphi^0$ in~$D^\ast$ (the
assumption~\eqref{eq:ini} ensures the existence
of~$\varphi^0$); \label{ite:2}

  \item For all $\bphi\in C^\infty(D_T;\R^3)$
\label{ite:3}
  \begin{subequations}\label{eq:wssymm}
  \begin{align}
\alpha\dual{\bm_t}{\bm\times\bphi}_{D_T} 
&+
\dual{\bm\times\bm_t}{\bm\times\bphi}_{D_T}+C_e 
\dual{\nabla\bm}{\nabla(\bm\times\bphi)}_{D_T}
\nonumber
\\
&=
\dual{\bH}{\bm\times\bphi}_{D_T};
  \label{eq:wssymm1}
\end{align}

\item There holds $\bn\times \nabla_\Gamma\lambda = \bn\times
\bH|_{\Gamma}$ in the sense of traces; \label{ite:4}

\item For $\bxi\in C^\infty(D;\R^3)$ and $\zeta\in
C^\infty(\Gamma)$ satisfying
$\bn\times\bxi|_{\Gamma}=\bn\times\nabla_\Gamma\zeta$ in the sense of
traces \label{ite:5}
\begin{align}\label{eq:wssymm2}
\dual{\bH_t}{\bxi}_{D_T}
-\dual{\dtn\lambda_t}{\zeta}_{\Gamma_T}
+
\sigma^{-1}\mu_0^{-1}\dual{\nabla\times\bH}{\nabla\times\bxi}_{D_T}
&=-\dual{\bm_t}{\bxi}_{D_T};
\end{align}
\end{subequations}
\item For almost all $t\in[0,T]$
\label{ite:6}
\begin{gather}
 \norm{\nabla \bm(t)}{\Ltwo{D}}^2 
+
 \norm{\bH(t)}{\Hcurl{D}}^2
+
\norm{\lambda(t)}{H^{1/2}(\Gamma)}^2
\nonumber
\\
+ 
 \norm{\bm_t}{\Ltwo{D_t}}^2
+
 \norm{\bH_t}{\Ltwo{D_t}}^2
+
 \norm{\lambda_t}{H^{1/2}(\Gamma_t)}^2 \leq C,
\label{eq:energybound2}
 \end{gather}
where the constant $C>0$ is independent of $t$.
\end{enumerate}
\end{definition}

The reason we integrate over~$[0,T]$ in~\eqref{eq:H lam t} to
have~\eqref{eq:wssymm2} is to facilitate the passing to the
limit in the proof of the main theorem.
The following lemma justifies the above definition.
\begin{lemma}\label{lem:equidef}
Let $(\bm,\bH,\bE)$ be a strong solution 
of~\eqref{eq:strong}--\eqref{eq:con}. If $\varphi\in
H(0,T;H^1(D^\ast))$
satisfies $\nabla\varphi=\bH|_{D^\ast_T}$, and if
$\lambda:=\gamma^+\varphi$, then the triple
$(\bm,\bH|_{D_T},\lambda)$
is a weak solution in the sense of Definition~\ref{def:fembemllg}. 

Conversely, let $(\bm,\bH,\lambda)$ be a sufficiently smooth
solution in the sense of Definition~\ref{def:fembemllg}, 
and let~$\varphi$ be the solution of
\begin{equation}\label{eq:var phi}
\Delta\varphi = 0
\text{ in } D^\ast,
\quad
\varphi = \lambda
\text{ on } \Gamma,
\quad
\varphi(x) = O(|x|^{-1})
\text{ as } |x|\to\infty.
\end{equation}
Then $(\bm,\overline\bH,\bE)$ is a strong solution
to~\eqref{eq:strong}--\eqref{eq:con}, where~$\overline\bH$ is 
defined by
\begin{align}\label{eq:repform}
\overline\bH:=\begin{cases}
\bH &\quad\text{in }D_T,\\
\nabla\varphi
&\quad\text{in } D_T^\ast,
\end{cases}
\end{align}
and~$\bE$ is reconstructed by letting
$\bE=\sigma^{-1}(\nabla\times\bH)$ in $D_T$ and by solving
\begin{subequations}\label{eq:el}
\begin{alignat}{2}
\nabla\times\bE &= -\mu_0\overline\bH_t
&&\quad\text{in } D_T^\ast,
\\
{\rm div}(\bE) &=0
&&\quad\text{in }D_T^\ast,
\\
\bn\times \bE|_{D_T^\ast} &= \bn\times \bE|_{D_T}
&&\quad\text{on }\Gamma_T.
\end{alignat}
\end{subequations}
 \end{lemma}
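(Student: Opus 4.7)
The proof is by direct verification in each direction, largely by reversing the formal calculations that motivated Definition~\ref{def:fembemllg}. I handle the forward implication first, and then the converse.

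For the forward direction (strong to weak), items~\ref{ite:1}--\ref{ite:2} are built into the definition of a strong solution and into the construction of $\lambda$. Equation~\eqref{eq:wssymm1} is obtained by multiplying~\eqref{eq:llg} by $\bm\times\bphi$, using $|\bm|=1$ and the Neumann condition~\eqref{eq:con1}---exactly the computation already sketched at the start of Subsection~\ref{subsec:wea sol}. The tangential trace identity in item~\ref{ite:4} is the formula $\nabla_\Gamma\lambda=\bH|_\Gamma-(\partial_n^+\varphi)\bn$ recalled after~\eqref{eq:H lam t}, crossed with $\bn$. Equation~\eqref{eq:wssymm2} is the pointwise-in-$t$ identity~\eqref{eq:H lam t} integrated on $[0,T]$. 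The energy bound~\eqref{eq:energybound2} follows from standard a~priori estimates obtained by testing~\eqref{eq:llg} against $\bm_t$ and~\eqref{eq:MLLG1}--\eqref{eq:MLLG2} against $\bH$ and $\bE$.

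For the converse direction (smooth weak to strong), the key preliminary observation is that item~\ref{ite:4}, together with $\bn\times\nabla_\Gamma\lambda=\bn\times(\nabla\varphi)|_\Gamma$ coming from $\varphi=\lambda$ on $\Gamma$, ensures that $\overline\bH$ defined by~\eqref{eq:repform} has matching tangential traces across $\Gamma$, so that $\nabla\times\overline\bH$ is a globally defined field. Then: (i) Equation~\eqref{eq:llg} is recovered from~\eqref{eq:wssymm1} by the standard reverse argument---pick $\bphi=\bm\times\bpsi$ with $\bpsi\in C^\infty_c(D_T;\R^3)$ and use $\bm\cdot\bm_t=0$ together with $|\bm|=1$---and varying $\bphi$ with nonzero boundary trace yields~\eqref{eq:con1}. (ii) Equation~\eqref{eq:MLLG1} holds in $D$ by the prescribed definition $\bE=\sigma^{-1}\nabla\times\bH$ and in $D^\ast$ since $\sigma=0$ and $\nabla\times\nabla\varphi=0$. (iii) The interior part of~\eqref{eq:MLLG2} is obtained by testing~\eqref{eq:wssymm2} with $\bxi\in C^\infty_c(D;\R^3)$ and $\zeta=0$; its exterior part and the tangential trace matching of $\bE$ across $\Gamma$ are imposed directly in~\eqref{eq:el}. (iv) Equation~\eqref{eq:MLLG3} is obtained by choosing $\bxi=\nabla\psi$ and $\zeta=\psi|_\Gamma$ in~\eqref{eq:wssymm2} for arbitrary $\psi\in C^\infty_c(\R^3)$ (the compatibility of item~\ref{ite:5} is then automatic), integrating by parts, and integrating in time using~\eqref{eq:ini}. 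The remaining items of~\eqref{eq:con} follow from items~\ref{ite:1}--\ref{ite:2} and the decay imposed in~\eqref{eq:var phi}.

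The main obstacle is step~(iv): because $\widetilde\bm$ has a jump across $\Gamma$, the distributional divergence of $\overline\bH+\widetilde\bm$ carries a surface contribution on $\Gamma$ that must be cancelled by the DtN boundary term $\dual{\dtn\lambda_t}{\psi|_\Gamma}_\Gamma$. This cancellation relies precisely on the identification $\dtn\lambda_t=\partial_n^+\varphi_t$, which is guaranteed by the unique solvability of the exterior Dirichlet problem~\eqref{eq:var phi}. Once this interface bookkeeping is in place, together with the analogous tangential trace matching of $\bE$ built into~\eqref{eq:el}, every remaining step reduces to standard integration by parts.
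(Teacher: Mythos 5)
Your proposal is correct and follows essentially the same route as the paper: the forward direction is exactly the motivational derivation preceding Definition~\ref{def:fembemllg} (the paper's proof literally says "also hold due to the analysis above Definition~\ref{def:fembemllg}"), and the converse reverses those computations. The paper's own proof is deliberately terse and, for the converse, defers entirely to the well-posedness statement in~\cite[Equation~(15)]{dual}, whereas you verify the converse directly and correctly isolate the one genuinely delicate point: the distributional identity ${\rm div}(\overline\bH+\widetilde\bm)=0$ hinges on the surface contribution from the jump of~$\widetilde\bm$ being cancelled by the DtN boundary term via the identification $\dtn\lambda_t=\partial_n^+\varphi_t$ from the exterior problem~\eqref{eq:var phi}, together with the tangential trace matching of $\overline\bH$ guaranteed by item~\ref{ite:4}. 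The only ingredient you implicitly take for granted (as does the lemma's statement) is the solvability of the exterior div--curl system~\eqref{eq:el}, which is exactly what the paper's citation of~\cite{dual} supplies.
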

\begin{proof}
We follow~\cite{dual}.
Assume that $(\bm,\bH,\bE)$
satisfies~\eqref{eq:strong}--\eqref{eq:con}.
Then clearly Statements~\eqref{ite:1}, \eqref{ite:2} and~\eqref{ite:6} in
Definition~\ref{def:fembemllg} hold, noting~\eqref{eq:ini}. 
Statements~\eqref{ite:3}, \eqref{ite:4}
and~\eqref{ite:5} also hold due to the analysis above
Definition~\ref{def:fembemllg}.
The converse is also true due to the
well-posedness of~\eqref{eq:el} as stated
in~\cite[Equation~(15)]{dual}. 
\end{proof}
\begin{remark}
The solution~$\varphi$ to~\eqref{eq:var phi} can be represented
as
$
\varphi
=
(1/2+\dlp)\lambda - \slp\dtn\lambda.
$
\end{remark}

The next subsection defines the spaces and functions to be used
in the approximation of the weak solution
the sense of Definition~\ref{def:fembemllg}.
\subsection{Discrete spaces and functions}\label{subsec:dis spa}
For time discretisation, we use a uniform partition $0\leq t_i\leq
T$, $i=0,\ldots,N$ with $t_i:=ik$ and $k:=T/N$. The spatial
discretisation is determined by a (shape) regular triangulation
$\TT_h$ of $D$ into compact tetrahedra $T\in\TT_h$ with diameter
$h_T/C\leq h\leq Ch_T$ for some uniform constant $C>0$.
Denoting by~$\NN_h$ the set of nodes of~$\TT_h$, 
we define the following spaces
\begin{align*}
 \SS^1(\TT_h)&:=\set{\phi_h\in C(D)}{\phi_h|T \in \PP^1(T)\text{ for all } T\in\TT_h},\\
 \KK_{\bphi_h}&:=\set{\bpsi_h\in\SS^1(\TT_h)^3}{\bpsi_h(z)\cdot\bphi_h(z)=0\text{
for all }z\in\NN_h},
\quad\bphi_h\in\SS^1(\TT_h)^3,
\end{align*}
where $\PP^1(T)$ is the space of polynomials of degree at most~1
on~$T$.

For the discretisation of~\eqref{eq:wssymm2}, we employ the
space $\NN\DD^1(\TT_h)$ of
first order N\'ed\'elec (edge) elements for $\bH$ and 
and the space $\SS^1(\TT_h|_\Gamma)$ for $\lambda$.
Here $\TT_h|_{\Gamma}$ denotes the restriction of the
triangulation to the boundary~$\Gamma$.
It follows from Statement~\ref{ite:4} in
Definition~\ref{def:fembemllg} that for each~$t\in[0,T]$, the
pair~$(\bH(t),\lambda(t))\in\XX$. We approximate the space~$\XX$ by
\begin{align*}
\XX_h:=\set{(\bxi,\zeta)\in \NN\DD^1(\TT_h)\times
\SS^1(\TT_h|_\Gamma)}{\bn\times \nabla_\Gamma\zeta =
\bn\times\bxi|_\Gamma}.
\end{align*}
To ensure the condition $\bn\times \nabla_\Gamma\zeta = \bn\times
\bxi|_{\Gamma}$, we observe the following.
For any~$\zeta\in \SS^1(\TT_h|_\Gamma)$, if $e$ denotes an edge of
$\TT_h$
on $\Gamma$, then $\int_e \bxi\cdot \btau\,ds = 
\int_e \nabla \zeta \cdot \btau\,ds = \zeta(z_0)-\zeta(z_1)$, where
$\btau$ is the unit direction vector on~$e$, and
$z_0,z_1$ are the endpoints of $e$.
Thus, taking as degrees of freedom all interior edges of $\TT_h$
(i.e. $\int_{e_i}\bxi\cdot\btau \,ds$) as well as all nodes of
$\TT_h|_\Gamma$ (i.e.~$\zeta(z_i)$), we fully determine
a function pair $(\bxi,\zeta)\in\XX_h $.
Due to the considerations above, it is clear that the above
space can be implemented directly without use of Lagrange
multipliers or other extra equations. 

The density properties of the finite element
spaces~$\{\XX_h\}_{h>0}$
are shown in Subsection~\ref{subsec:som lem}; see 
Lemma~\ref{lem:den pro}.

Given functions 
$\bw_h^i\colon D\to \R^d$, $d\in\N$,
for all $i=0,\ldots,N$ we define for all $t\in [t_i,t_{i+1}]$
\begin{align*}
\bw_{hk}(t):=\frac{t_{i+1}-t}{k}\bw_h^i + \frac{t-t_i}{k}\bw_h^{i+1},
\quad
\bw_{hk}^-(t):=\bw_h^i,
\quad
\bw_{hk}^+(t):=\bw_h^{i+1} .
\end{align*}
Moreover, we define
\begin{equation}\label{eq:dt}
d_t\bw_h^{i+1}:=\frac{\bw_h^{i+1}-\bw_h^i}{k}\quad\text{for all }
i=0, \ldots, N-1.
\end{equation}

Finally, we denote by $\Pi_{\SS}$ the usual interpolation operator on
$\SS^1(\TT_h)$.
We are now ready to present the algorithm to compute approximate
solutions to problem~\eqref{eq:strong}--\eqref{eq:con}.

\subsection{Numerical algorithm}\label{section:alg}
In the sequel, when there is no confusion we use the same
notation $\bH$ for 
the restriction of $\bH\colon \R^3_T\to \R^3$ to the domain $D_T$.
\begin{algorithm}\label{algorithm}
\mbox{}

 \textbf{Input:} Initial data $\bm^0_h\in\SS^1(\TT_h)^3$, 
$(\bH^0_h,\lambda_h^0)\in\XX_h$, 
and parameter $\theta\in[0,1]$.

 \textbf{For} $i=0,\ldots,N-1$ \textbf{do:}
 \begin{enumerate}
  \item Compute the unique function $\bv^i_h\in \KK_{\bm_h^i}$
satisfying for all $\bphi_h\in \KK_{\bm_h^i}$
  \begin{align}\label{eq:dllg}
  \begin{split}
   \alpha\dual{\bv_h^i}{\bphi_h}_D 
&+ 
\dual{\bm_h^i\times \bv_h^i}{\bphi_h}_D
+ 
C_e\theta k \dual{\nabla \bv_h^i}{\nabla \bphi_h}_D
\\
&=
-C_e \dual{\nabla \bm_h^i}{\nabla \bphi_h}_D 
+
\dual{\bH_h^i}{\bphi_h}_D.
  \end{split}
  \end{align}
  \item Define $\bm_h^{i+1}\in \SS^1(\TT_h)^3$ nodewise by 
\begin{equation}\label{eq:mhip1}
\bm_h^{i+1}(z) =\bm_h^i(z) + k\bv_h^i(z) \quad\text{for all } z\in\NN_h.
\end{equation}
\item Compute the unique functions
$(\bH_h^{i+1},\lambda_h^{i+1})\in\XX_h$
 satisfying for all
$(\bxi_h,\zeta_h)\in \XX_h$
\begin{align}\label{eq:dsymm}
\dual{d_t\bH_h^{i+1}}{\bxi_h}_{D}
&
-\dual{d_t\dtn_h\lambda_h^{i+1}}{\zeta_h}_\Gamma
+
\sigma^{-1}\mu_0^{-1}\dual{\nabla\times\bH_h^{i+1}}{\nabla\times\bxi_h}_{D}
\nonumber
\\
&=
-\dual{\bv_h^i}{\bxi_h}_{D},
\end{align}
where $\dtn_h\colon H^{1/2}(\Gamma)\to \SS^1(\TT_h|_\Gamma)$ 
is the discrete Dirichlet-to-Neumann operator to be defined
later.
\end{enumerate}

\textbf{Output:} Approximations $(\bm_h^i,\bH_h^i,\lambda_h^i)$
for all $i=0,\ldots,N$.
\end{algorithm}

The linear formula~\eqref{eq:mhip1} was introduced
in~\cite{bartels} and used in~\cite{Abert_etal}.
Equation~\eqref{eq:dsymm} requires the computation
of~$\dtn_h\lambda$ for any~$\lambda\in H^{1/2}(\Gamma)$.
This is done by use of
the boundary element method. 
Let~$\mu\in H^{-1/2}(\Gamma)$ 
and~$\mu_h\in\PP^0(\TT_h|_\Gamma)$
be, respectively, the solution of
\begin{align}\label{eq:bem}
\slp\mu= (\dlp-1/2)\lambda
\quad\text{and}\quad
\dual{\slp \mu_h}{\nu_h}_\Gamma =
\dual{(\dlp-1/2)\lambda}{\nu_h}_\Gamma\quad\forall\nu_h\in
 \PP^0(\TT_h|_\Gamma),
\end{align}
where $\PP^0(\TT_h|_\Gamma)$ is the space of 
piecewise-constant functions on~$\TT_h|_\Gamma$.

If the representation~\eqref{eq:dtn} of~$\dtn$ is used, 
then~$\dtn\lambda=\mu$, and
we can uniquely define~$\dtn_h\lambda$
by solving
\begin{equation}\label{eq:bem1}
\dual{\dtn_h\lambda}{\zeta_h}_\Gamma
=
\dual{\mu_h}{\zeta_h}_\Gamma
\quad\forall\zeta_h\in\SS^1(\TT_h|_\Gamma).
\end{equation}
This is known as the Johnson-N\'ed\'elec coupling. 

If we use the representation~\eqref{eq:dtn2} for~$\dtn\lambda$
then $\dtn\lambda = (1/2-\dlp^\prime)\mu-\hyp\lambda$.
In this case we can uniquely define~$\dtn_h\lambda$ by solving
\begin{align}\label{eq:bem2}
\dual{\dtn_h\lambda}{\zeta_h}_\Gamma 
= 
\dual{(1/2-\dlp^\prime)\mu_h}{\zeta_h}_\Gamma 
-
\dual{\hyp \lambda}{\zeta_h}_\Gamma
\quad\forall\zeta_h\in \SS^1(\TT_h|_\Gamma).
\end{align}
This approach yields an (almost) symmetric system and is called
Costabel's coupling.

In practice,~\eqref{eq:dsymm} only requires the computation
of~$\dual{\dtn_h\lambda_h}{\zeta_h}_\Gamma$ for
any~$\lambda_h, \zeta_h\in\SS^1(\TT_h|_\Gamma)$. 
So in the implementation,
neither~\eqref{eq:bem1} nor~\eqref{eq:bem2} has to be solved. 
It suffices to solve the second equation in~\eqref{eq:bem}
and compute the right-hand side of either~\eqref{eq:bem1}
or~\eqref{eq:bem2}.

It is proved in~\cite[Appendix~A]{afembem} that Costabel's coupling
results in a discrete operator which is uniformly elliptic and
continuous:
\begin{align}\label{eq:dtnelliptic}
\begin{split}
-\dual{\dtn_h \zeta_h}{\zeta_h}_\Gamma&\geq
C_\dtn^{-1}\norm{\zeta_h}{H^{1/2}(\Gamma)}^2\quad\text{for all
}\zeta_h\in \SS^1(\TT_h|_\Gamma),\\
\norm{\dtn_h\zeta}{H^{-1/2}(\Gamma)}^2&\leq C_\dtn
\norm{\zeta}{H^{1/2}(\Gamma)}^2\quad\text{for all }\zeta\in H^{1/2}(\Gamma),
\end{split}
\end{align}
for some constant $C_\dtn>0$ which depends only on $\Gamma$. 
Even though the remainder of the analysis works analogously for both
approaches,
we are not aware of an ellipticity result of the 
form~\eqref{eq:dtnelliptic} for the Johnson-N\'ed\'elec
approach. Thus, from now on $\dtn_h$ is understood to be defined
by~\eqref{eq:bem2}.

\subsection{Main result}
Before stating the main result of this part of the paper, 
we first state some general
assumptions. Firstly, the weak convergence of approximate solutions
requires the following
conditions on $h$ and $k$, depending on the value of the parameter~$\theta$
in~\eqref{eq:dllg}:
\begin{equation}\label{eq:hk 12}
\begin{cases}
k = o(h^2) \quad & \text{when } 0 \le \theta < 1/2, \\
k = o(h)   \quad & \text{when } \theta = 1/2, \\
\text{no condition} & \text{when } 1/2 < \theta \le 1.
\end{cases}
\end{equation}
Some supporting lemmas which have their own interests do not require any
condition when $\theta=1/2$. For those results, a slightly different
condition is required, namely
 \begin{equation}\label{eq:hk con}
\begin{cases}
k = o(h^2) \quad & \text{when } 0 \le \theta < 1/2, \\
\text{no condition} & \text{when } 1/2 \le \theta \le 1.
\end{cases}
\end{equation}
The initial data are assumed to satisfy
\begin{equation}\label{eq:mh0 Hh0}
\sup_{h>0}
\left(
\norm{\bm_h^0}{H^1(D)}
+
\norm{\bH_h^0}{\Hcurl{D}}
+
\norm{\lambda_h^0}{H^{1/2}(\Gamma)}
\right)
<\infty
\quad\text{and}\quad
\lim_{h\to0}
\norm{\bm_h^0-\bm^0}{\Ltwo{D}}
= 0.
\end{equation}

We are now ready to state the main result of this part of the paper.
\begin{theorem}[Existence of solutions]\label{thm:weakconv}
Under the assumptions~\eqref{eq:hk 12} and~\eqref{eq:mh0
Hh0}, the problem~\eqref{eq:strong}--\eqref{eq:con} has a 
solution~$(\bm,\bH,\lambda)$ in the sense of
Definition~\ref{def:fembemllg}.
\end{theorem}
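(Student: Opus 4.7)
The plan is to follow the standard three-step programme for nonlinear parabolic discretisations: derive a discrete energy estimate for Algorithm~\ref{algorithm}, extract a weakly convergent subsequence via Banach--Alaoglu together with Aubin--Lions compactness for the magnetisation, and then pass to the limit in~\eqref{eq:dllg}--\eqref{eq:dsymm} to recover the weak formulation~\eqref{eq:wssymm1}--\eqref{eq:wssymm2}. The CFL-type restriction~\eqref{eq:hk 12} will enter only at the very end, to propagate the discrete unit-length constraint into the limit.

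First I would observe that Algorithm~\ref{algorithm} is well-posed: the form appearing in~\eqref{eq:dllg} is coercive on $\KK_{\bm_h^i}$ by the skew-symmetry of $\bu\mapsto\bm_h^i\times\bu$, and the form in~\eqref{eq:dsymm} is coercive on $\XX_h$ by~\eqref{eq:dtnelliptic}. To obtain the energy estimate I would test~\eqref{eq:dllg} with $\bphi_h=\bv_h^i$ (which lies in $\KK_{\bm_h^i}$) and test~\eqref{eq:dsymm} with $(\bxi_h,\zeta_h)=(\bH_h^{i+1},\lambda_h^{i+1})$, then apply the identity $2(a-b)\cdot a=|a|^2-|b|^2+|a-b|^2$ to the discrete time differences and use~\eqref{eq:mhip1} to make the $C_e\|\nabla\bm_h\|^2$-term telescope. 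The LLG block and the eddy-current block couple only through $\dual{\bH_h^i}{\bv_h^i}_D$ and its counterpart $\dual{\bv_h^i}{\bH_h^{i+1}}_D$; rewriting the latter as $\dual{\bv_h^i}{\bH_h^i}_D+k\dual{\bv_h^i}{d_t\bH_h^{i+1}}_D$ produces a time-shift residue that is absorbed by Young's inequality and a discrete Gronwall lemma. A further residue of sign $(\theta-\tfrac12)k\|\nabla\bv_h^i\|^2_{\Ltwo{D}}$ is the source of the case distinction in~\eqref{eq:hk 12}. Combined with the nodewise identity $|\bm_h^{i+1}(z)|^2=|\bm_h^i(z)|^2+k^2|\bv_h^i(z)|^2$, this yields the discrete counterpart of~\eqref{eq:energybound2}.

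With these uniform bounds in hand I would extract a subsequence such that $\bm_{hk}\rightharpoonup\bm$ weakly-$*$ in $L^\infty(0,T;\Hone{D})$, $\bm_{hk,t}\rightharpoonup\bm_t$ weakly in $\Ltwo{D_T}$, $\bH_{hk}\rightharpoonup\bH$ weakly-$*$ in $L^\infty(0,T;\Hcurl{D})$ with $\bH_{hk,t}\rightharpoonup\bH_t$ weakly in $\Ltwo{D_T}$, and analogously for $\lambda_{hk}$ in $H^{1/2}(\Gamma)$. Aubin--Lions then yields strong convergence of $\bm_{hk}$ in $\Ltwo{D_T}$ and, on a further subsequence, pointwise almost everywhere. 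The CFL condition~\eqref{eq:hk 12} together with the nodewise identity above and the $\Ltwo{D_T}$-bound on $\bv_{hk}$ forces $|\bm_{hk}|\to 1$, giving property~\eqref{ite:1} of Definition~\ref{def:fembemllg}. For~\eqref{eq:wssymm1} I would insert $\bphi_h=\Pi_{\SS}(\bm_h^i\times\bphi(t_i,\cdot))$ into~\eqref{eq:dllg} and sum in $i$; the strong convergence of $\bm_{hk}$ combined with the uniform $H^1$-bound handles every nonlinear term, while standard interpolation estimates control the discrepancy between $\Pi_{\SS}(\bm_h^i\times\bphi)$ and $\bm\times\bphi$. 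For~\eqref{eq:wssymm2} I would invoke the density of $\XX_h$ in $\XX$ (Lemma~\ref{lem:den pro}) to approximate any smooth test pair $(\bxi,\zeta)\in\XX$ in the $\Hcurl{D}\times H^{1/2}(\Gamma)$ norm, and pass to the limit using the weak convergence of the discrete time derivatives together with~\eqref{eq:dtnelliptic} and the operator convergence $\dtn_h\zeta_h\to\dtn\zeta$ on smooth data inherited from the Galerkin quasi-optimality of~\eqref{eq:bem}.

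The principal obstacle is closing the discrete energy estimate: the magnetisation step and the eddy-current step are coupled across shifted time levels ($\bH_h^i$ drives $\bv_h^i$ in~\eqref{eq:dllg}, but $\bv_h^i$ feeds $\bH_h^{i+1}$ in~\eqref{eq:dsymm}), so the natural energy identity picks up a cross term involving $d_t\bH_h^{i+1}$ that has to be absorbed, and this is precisely what forces the scaling in~\eqref{eq:hk 12}. A secondary technical point is the passage to the limit in $\dual{d_t\dtn_h\lambda_h^{i+1}}{\zeta_h}_{\Gamma}$, which couples a weakly convergent factor with a mesh-dependent operator; this will be handled by exploiting the symmetry of $\dtn_h$ inherent in Costabel's coupling to transfer the discrete time derivative onto the fixed smooth test function via Abel summation, after which the operator convergence of $\dtn_h$ on smooth data suffices.
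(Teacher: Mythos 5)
Your overall skeleton -- discrete energy estimate, Banach--Alaoglu plus Aubin--Lions, limit passage with $\bphi_{hk}=\Pi_\SS(\bm_{hk}^-\times\bphi)$ and density of $\XX_h$ -- matches the paper (Lemmas~\ref{lem:denergygen}--\ref{lem:wea con} plus Section~\ref{section:weak}), and your Abel--summation idea for the $\dtn_h$ term is a legitimate alternative to the paper's Lemma~\ref{lem:wea con} since the test pair $(\bxi_h,\zeta_h)$ is time-independent. However, there is one genuine error of attribution and one glossed-over step.

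First, you claim that~\eqref{eq:hk 12} \emph{only} enters to propagate $|\bm_{hk}|\to1$. That is not where the condition is used. The unit-length argument relies on the nodewise identity $|\bm_h^j(z)|^2=|\bm_h^0(z)|^2+k^2\sum_{i<j}|\bv_h^i(z)|^2$, the mass-lumping equivalence~\eqref{eq:LT13}, the energy bound $k\sum_i\norm{\bv_h^i}{\Ltwo{D}}^2\leq C$, and the strong $\Ltwo{D}$-convergence of $\bm_h^0$ from~\eqref{eq:mh0 Hh0}; none of this needs the sharper $k=o(h)$ at $\theta=1/2$, and the paper proves it under the weaker~\eqref{eq:hk con}. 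The place where~\eqref{eq:hk 12} is actually indispensable is twofold: (i) the limit $C_e\theta k\dual{\nabla\bv_{hk}^-}{\nabla\bphi_{hk}}_{D_T}\to0$, where for $\theta\leq1/2$ the inverse estimate gives only $\norm{\nabla\bv_{hk}^-}{\Ltwo{D_T}}\lesssim h^{-1}$ so the whole term is $O(kh^{-1})$ and you need $k=o(h)$; and (ii) the strong convergence $\norm{\bm_{hk}^--\bm}{L^2(0,T;\H^{1/2}(D))}\to0$ of Lemma~\ref{lem:str con}, which again requires $\sum_i k^3\norm{\bv_h^i}{\Hone{D}}^2\to0$ and hence the same $k=o(h)$ for $\theta\leq1/2$. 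You never mention step~(i), which would silently fail for $\theta=1/2$ without the condition.

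Second, the assertion that ``the strong convergence of $\bm_{hk}$ combined with the uniform $H^1$-bound handles every nonlinear term'' is correct in spirit but hides the decisive estimate. The term $\dual{\bm_{hk}^-\times\bv_{hk}^-}{\bphi_{hk}}_{D_T}$ has $\bv_{hk}^-$ only \emph{weakly} convergent in $\Ltwo{D_T}$, so after the cyclic rewriting~\eqref{eq:abc} you must show $\bphi_{hk}\times\bm_{hk}^-\to(\bm\times\bphi)\times\bm$ \emph{strongly} in $\Ltwo{D_T}$. Strong $\Ltwo{D_T}$-convergence of both factors together with an $L^\infty(0,T;\Hone{D})$-bound is not enough by itself: you need the interpolation $\norm{f}{L^3}\leq\norm{f}{L^2}^{1/2}\norm{f}{L^6}^{1/2}$ (equivalently the embedding of Lemma~\ref{lem:h}, $\norm{\bw_0\times\bw_1}{\Ltwo{D}}\lesssim\norm{\bw_0}{\H^{1/2}(D)}\norm{\bw_1}{\Hone{D}}$) combined with the $L^2(0,T;\H^{1/2}(D))$ strong convergence of Lemma~\ref{lem:str con}. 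Spelling out this $L^3\times L^6\to L^2$ pairing is precisely the content of the paper's Lemmas~\ref{lem:str con} and~\ref{lem:h}; without it the passage to the limit in the precession term does not close. Finally, a minor remark: the Young/Abel argument in Lemma~\ref{lem:denergygen} already absorbs the cross term $-k\dual{\bv_h^i}{\bH_h^{i+1}-\bH_h^i}_D$ into $\tfrac12\sum a_h(A_h^{i+1}-A_h^i,A_h^{i+1}-A_h^i)$ for $k<2\alpha$, so no Gronwall lemma is required; invoking one would still give a finite-$T$ bound but with an avoidable exponential factor.
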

\section{Proofs of the main result}\label{sec:pro}
\subsection{Some lemmas}\label{subsec:som lem}
In this subsection we prove all important lemmas which are directly related
to the proofs of the theorem.
The first lemma proves density properties of the discrete spaces.
\begin{lemma}\label{lem:den pro}
Provided that the meshes $\{\TT_h\}_{h>0}$ are regular, 
the union~$\bigcup_{h>0}\XX_h$ is dense in~$\XX$.
Moreover, there exists an interpolation operator
$\Pi_\XX:=(\Pi_{\XX,D},\Pi_{\XX,\Gamma})\colon 
\big(\H^2(D)\times H^2(\Gamma)\big)
\cap \XX\to \XX_h$ which satisfies
\begin{align}\label{eq:int}
\norm{(1-\Pi_\XX)(\bxi,\zeta)}{\Hcurl{D}\times H^{1/2}(\Gamma) }
&\leq C_{\XX} h( \norm{\bxi}{\H^2(D)}+ h^{1/2}\norm{\zeta}{H^2(\Gamma)}),
\end{align}
where $C_\XX>0$ depends only on $D$, $\Gamma$, and the shape
regularity of $\TT_h$.
\end{lemma}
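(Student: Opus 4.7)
The plan is to define $\Pi_\XX$ as a pair of classical local interpolants, check that the discrete coupling $\bn\times\bxi|_\Gamma=\bn\times\nabla_\Gamma\zeta$ is preserved, read off \eqref{eq:int} from standard FEM estimates, and then derive density by combining the estimate with a smoothing argument in $\XX$.

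For the construction I would take $\Pi_{\XX,D}$ to be the first-order N\'ed\'elec edge interpolant (determined by the edge moments $\int_e\bxi\cdot\btau\,ds$) and $\Pi_{\XX,\Gamma}$ to be the nodal Lagrange interpolant onto $\SS^1(\TT_h|_\Gamma)$; both are well-defined on $\H^2(D)\times H^2(\Gamma)$ by Sobolev embedding. Compatibility $\Pi_\XX(\bxi,\zeta)\in\XX_h$ is essentially the edge-moment argument already used to characterise $\XX_h$ in Subsection~\ref{subsec:dis spa}: for each boundary edge $e$ with endpoints $z_0,z_1$ the constraint on $\XX$ gives
\[
\int_e\Pi_{\XX,D}\bxi\cdot\btau\,ds
=\int_e\bxi\cdot\btau\,ds
=\int_e\nabla_\Gamma\zeta\cdot\btau\,ds
=\zeta(z_1)-\zeta(z_0)
=\int_e\nabla_\Gamma\Pi_{\XX,\Gamma}\zeta\cdot\btau\,ds,
\]
which is the discrete coupling. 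For \eqref{eq:int}, the classical N\'ed\'elec estimate yields $\|\bxi-\Pi_{\XX,D}\bxi\|_{\Hcurl{D}}\le Ch\|\bxi\|_{\H^2(D)}$, while the standard Bramble-Hilbert bounds $\|\zeta-\Pi_{\XX,\Gamma}\zeta\|_{L^2(\Gamma)}\le Ch^2\|\zeta\|_{H^2(\Gamma)}$ and $\|\zeta-\Pi_{\XX,\Gamma}\zeta\|_{H^1(\Gamma)}\le Ch\|\zeta\|_{H^2(\Gamma)}$ interpolate to $\|\zeta-\Pi_{\XX,\Gamma}\zeta\|_{H^{1/2}(\Gamma)}\le Ch^{3/2}\|\zeta\|_{H^2(\Gamma)}$, which accounts for the factor $h\cdot h^{1/2}$ on the $\zeta$ term in \eqref{eq:int}.

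For density, given $(\bxi,\zeta)\in\XX$ I would construct $(\bxi_n,\zeta_n)\in(\H^2(D)\times H^2(\Gamma))\cap\XX$ with $(\bxi_n,\zeta_n)\to(\bxi,\zeta)$ in $\Hcurl{D}\times H^{1/2}(\Gamma)$, and then pick $h_n\to 0$ so small that $\Pi_{\XX,h_n}(\bxi_n,\zeta_n)$ also converges by \eqref{eq:int}. The smoothing itself would proceed in three steps: (i) surface-mollify $\zeta$ into $\zeta_n\in H^2(\Gamma)$ with $\zeta_n\to\zeta$ in $H^{1/2}(\Gamma)$; (ii) use continuity of a right inverse of the tangential trace from \cite{buffa,buffa2} to pick a lift $\bL_n$ of $\bn\times\nabla_\Gamma(\zeta_n-\zeta)$ with $\|\bL_n\|_{\Hcurl{D}}\to 0$, so that $(\bxi+\bL_n,\zeta_n)\in\XX$; (iii) subtract a smooth $\H^2(D)$-lift $\bT_n$ of $\bn\times\nabla_\Gamma\zeta_n$ to pass to the homogeneous tangential-trace space, mollify the residual $\bxi+\bL_n-\bT_n\in\set{\bu\in\Hcurl{D}}{\bn\times\bu|_\Gamma=0}$ via the standard density of $C_c^\infty(D)^3$, and finally add $\bT_n$ back. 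This produces $\bxi_n\in\H^2(D)$ with $\bn\times\bxi_n|_\Gamma=\bn\times\nabla_\Gamma\zeta_n$, as required.

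The main obstacle is step (iii): smoothing an $\Hcurl{D}$-field while preserving an inhomogeneous tangential trace on a merely Lipschitz surface. The key idea, which I would use, is to convert the affine approximation problem to a linear one by subtracting an explicit smooth lift of the target trace, invoke the classical density of $C_c^\infty(D)^3$ in the space of $\Hcurl{D}$-fields with vanishing tangential trace, and then restore the trace by adding the lift back; the delicate technical point is controlling the $\Hcurl{D}$-norm of all three lifts uniformly in $n$, which again relies on the tangential trace theory of \cite{buffa,buffa2}.
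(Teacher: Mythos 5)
Your construction of $\Pi_\XX$ (N\'ed\'elec interior DOFs, nodal Lagrange boundary DOFs, compatibility via the edge-moment identity) and the derivation of the interpolation estimate are exactly the paper's proof. For the density claim the paper is much terser: it simply asserts that $\big(\H^2(D)\times H^2(\Gamma)\big)\cap\XX$ is dense in $\XX$ and then invokes~\eqref{eq:int}; your mollify-and-lift sketch is additional supporting detail in the same spirit rather than a different route, and, as you yourself flag, the delicate point there (producing $\H^2(D)$ lifts with controlled $\Hcurl{D}$ norms on a merely Lipschitz boundary) is left unresolved in both your proposal and the paper.
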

\begin{proof}
The interpolation operator
$\Pi_\XX:=(\Pi_{\XX,D},\Pi_{\XX,\Gamma})\colon 
\big(\H^2(D)\times H^2(\Gamma)\big)
\cap \XX\to \XX_h$ is constructed as follows.
The interior degrees of freedom (edges) of $\Pi_\XX(\bxi,\zeta)$
are equal to the interior degrees of freedom of
$\Pi_{\NN\DD}\bxi\in\NN\DD^1(\TT_h)$, where $\Pi_{\NN\DD}$ is
the usual interpolation operator onto $\NN\DD^1(\TT_h)$.
The degrees of freedom of $\Pi_\XX(\bxi,\zeta)$ which lie on
$\Gamma$ (nodes) are equal to $\Pi_\SS\zeta$.
By the definition of $\XX_h$, this fully determines $\Pi_\XX$.
Particularly, since $\bn\times
\bxi|_\Gamma=\bn\times\nabla_\Gamma\zeta$, there holds
$\Pi_{\NN\DD}\bxi|_\Gamma = \Pi_{\XX,\Gamma}(\bxi,\zeta)$.
Hence, the interpolation error can be bounded by
\begin{align*}
\norm{(1-\Pi_\XX)(\bxi,\zeta)}{\Hcurl{D}\times H^{1/2}(\Gamma) }&\leq \norm{(1-\Pi_{\NN\DD})\bxi}{\Hcurl{D}}+\norm{(1-\Pi_\SS)\zeta}{H^{1/2}(\Gamma)}\\
&\lesssim h( \norm{\bxi}{\H^2(D)}+ h^{1/2}\norm{\zeta}{H^2(\Gamma)}).
\end{align*}
Since $\big(\H^2(D)\times H^2(\Gamma)\big)\cap \XX$ is dense in
$\XX$, this concludes the proof.
\end{proof}

The following lemma gives an equivalent form
to~\eqref{eq:wssymm2} and shows that 
Algorithm~\ref{algorithm} is well-defined.
\begin{lemma}\label{lem:bil for}
Let $a(\cdot,\cdot)\colon \XX\times\XX\to \R$, $a_h(\cdot,\cdot)\colon \XX_h\times\XX_h\to \R$, and
$b(\cdot,\cdot)\colon \Hcurl{D}\times\Hcurl{D}\to \R$ be
bilinear forms defined by
\begin{align*}
a(A,B)&:=\dual{\bpsi}{\bxi}_D -\dual{\dtn \eta}{\zeta}_\Gamma,
\\
a_h(A_h,B_h)
&:=
\dual{\bpsi_h}{\bxi_h}_D
-
\dual{\dtn_h\eta_h}{\zeta_h}_{\Gamma},
\\
b(\bpsi,\bxi)&:=
\sigma^{-1}\mu_0^{-1}
\dual{\nabla\times \bpsi}{\nabla\times \bxi}_\Gamma,
\end{align*}
for all $\bpsi, \bxi\in\Hcurl{D}$, $A:=(\bpsi,\eta)$, $B:=(\bxi,\zeta)\in\XX$, $A_h = (\bpsi_h,\eta_h), B_h = (\bxi_h,\zeta_h) \in \XX_h$.
Then 
\begin{enumerate}
\item
The bilinear forms satisfy,
for all 
$A=(\bpsi,\eta)\in\XX$ and
$A_h=(\bpsi_h,\eta_h)\in\XX_h$,
\begin{align}\label{eq:elliptic}
\begin{split}
a(A,A)&\geq C_{\rm ell} 
\big(
\norm{\bpsi}{\Ltwo{D}}^2+\norm{\eta}{H^{1/2}(\Gamma)}^2
\big),
\\
a_h(A_h,A_h)
&\geq 
C_{\rm ell}
\big(
\norm{\bpsi_h}{\Ltwo{D}}^2+\norm{\eta_h}{H^{1/2}(\Gamma)}^2
\big),
\\
b(\bpsi,\bpsi)
&\geq C_{\rm ell}\norm{\nabla\times
\bpsi}{\Ltwo{D}}^2.
\end{split}
\end{align}
\item
Equation~\eqref{eq:wssymm2} is equivalent to
\begin{equation}\label{eq:bil for}
\int_0^T
a(A_t(t),B) \, dt
+
\int_0^T
b(\bH(t),\bxi) \, dt
=
-
\dual{\bm_t}{\bxi}_{D_T}
\end{equation}
for all~$B=(\bxi,\zeta)\in \XX$, where~$A=(\bH,\lambda)$.
\item
Equation~\eqref{eq:dsymm} is of the form
\begin{align}\label{eq:eddygeneral}
a_h(d_t A_h^{i+1},B_h) + b(\bH_h^{i+1},\bxi_h)
=
-\dual{\bv_h^i}{\bxi_h}_\Gamma
\end{align}
where $A_h^{i+1}:=(\bH_h^{i+1},\lambda_h^{i+1})$ and
$B_h:=(\bxi_h,\zeta_h)$.
\item
Algorithm~\ref{algorithm} is well-defined in the sense
that~\eqref{eq:dllg} and~\eqref{eq:dsymm} have unique solutions.
\end{enumerate}
\end{lemma}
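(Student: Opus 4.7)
\textbf{Part (1): Ellipticity.} For $b$ the claim is immediate from the definition since $b(\bpsi,\bpsi)=\sigma^{-1}\mu_0^{-1}\|\nabla\times\bpsi\|_{\Ltwo{D}}^2$. For $a_h$ we expand
\[
a_h(A_h,A_h)=\|\bpsi_h\|_{\Ltwo{D}}^2-\dual{\dtn_h\eta_h}{\eta_h}_\Gamma
\]
and invoke the Costabel-coupling ellipticity estimate~\eqref{eq:dtnelliptic}, which gives the $H^{1/2}(\Gamma)$ bound on $\eta_h$. For $a$ the argument is the same once we recall that the exterior Dirichlet-to-Neumann map associated with the decaying harmonic extension is coercive, i.e.\ $-\dual{\dtn\eta}{\eta}_\Gamma\gtrsim\|\eta\|_{H^{1/2}(\Gamma)}^2$ (this is classical and follows from the representation~\eqref{eq:dtn2} together with the coercivity of $\hyp$ and positivity of $\slp$). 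No step here is deep; we just collect known facts.

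\textbf{Part (2): Equivalence of \eqref{eq:wssymm2} and \eqref{eq:bil for}.} Given the regularity of $(\bm,\bH,\lambda)$ postulated in Definition~\ref{def:fembemllg}, the left-hand side of~\eqref{eq:wssymm2} is continuous with respect to $(\bxi,\zeta)$ in the $\Hcurl{D}\times H^{1/2}(\Gamma)$-topology (using the continuity bound on $\dtn$ built into~\eqref{eq:dtnelliptic} for the continuous level, or a standard mapping property of $\dtn$). Test functions in~\eqref{eq:wssymm2} are smooth and time-independent. Writing~\eqref{eq:wssymm2} as $\int_0^T [a(A_t(t),B)+b(\bH(t),B)]\,dt=-\dual{\bm_t}{\bxi}_{D_T}$ for smooth $B=(\bxi,\zeta)$ satisfying the tangential trace compatibility, the identity extends to every $B\in\XX$ by density. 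Density of smooth compatible pairs in $\XX$ follows from Lemma~\ref{lem:den pro}: each $\XX_h\subset(\H^2(D)\cap\ldots)\times H^2(\Gamma)$ intersected with $\XX$ is a subset of the smooth compatible pairs, and $\bigcup_{h>0}\XX_h$ is dense in $\XX$. The converse direction (from~\eqref{eq:bil for} to~\eqref{eq:wssymm2}) is trivial since smooth compatible pairs are admissible test functions in~\eqref{eq:bil for}.

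\textbf{Part (3): Reformulation of \eqref{eq:dsymm}.} This is a direct substitution: the discrete time derivative $d_t A_h^{i+1}=(d_t\bH_h^{i+1},d_t\lambda_h^{i+1})$ plugged into $a_h$, combined with the linearity of $\dtn_h$ (so that $d_t\dtn_h\lambda_h^{i+1}=\dtn_h d_t\lambda_h^{i+1}$), yields~\eqref{eq:eddygeneral} verbatim.

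\textbf{Part (4): Well-posedness of the algorithm.} For~\eqref{eq:dllg}, set $\beta_i(\bv,\bphi):=\alpha\dual{\bv}{\bphi}_D+\dual{\bm_h^i\times\bv}{\bphi}_D+C_e\theta k\dual{\nabla\bv}{\nabla\bphi}_D$ on the finite-dimensional subspace $\KK_{\bm_h^i}\subset\SS^1(\TT_h)^3$. The cross-product term is skew-symmetric, so $\beta_i(\bv,\bv)\geq\alpha\|\bv\|_{\Ltwo{D}}^2$, which suffices on the finite-dimensional space $\KK_{\bm_h^i}$ to conclude unique solvability by the Lax--Milgram lemma (or by the equivalence of injectivity and surjectivity in finite dimensions). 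For~\eqref{eq:dsymm}, rewrite~\eqref{eq:eddygeneral} as
\[
a_h(A_h^{i+1},B_h)+k\,b(\bH_h^{i+1},\bxi_h)=a_h(A_h^{i},B_h)-k\dual{\bv_h^i}{\bxi_h}_D.
\]
Part~(1) shows that the bilinear form $(A,B)\mapsto a_h(A,B)+k\,b(\bpsi,\bxi)$ is coercive on $\XX_h$ with respect to the norm $\|\bpsi\|_{\Ltwo{D}}^2+\|\eta\|_{H^{1/2}(\Gamma)}^2+k\|\nabla\times\bpsi\|_{\Ltwo{D}}^2$; since continuity on the finite-dimensional space $\XX_h$ is automatic, Lax--Milgram delivers $A_h^{i+1}$ uniquely.

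The only mildly delicate step is Part~(2); concretely, one must verify that the density of $\bigcup_h\XX_h$ supplied by Lemma~\ref{lem:den pro} indeed yields density of \emph{smooth} $\XX$-compatible pairs, since the weak formulation~\eqref{eq:wssymm2} only tests against $C^\infty$ functions. Everything else reduces to bookkeeping.
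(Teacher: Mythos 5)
The paper's own proof of this lemma is extremely terse: it only addresses Part~(4), stating that~\eqref{eq:dsymm} is solvable by ellipticity of $a_h$ and $b$ and that~\eqref{eq:dllg} is solvable by positive definiteness, linearity, and finite dimensionality. Parts~(1)--(3) are treated as immediate from the definitions and the Costabel-coupling estimate~\eqref{eq:dtnelliptic}. Your proposal covers the same ground, adds more detail, and in Parts~(1), (3) and~(4) is both correct and in line with what the paper intends.

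However, there is a genuine flaw in your argument for Part~(2). You claim that $\bigcup_h\XX_h$ ``is a subset of the smooth compatible pairs'' and hence density of smooth compatible pairs in $\XX$ follows from Lemma~\ref{lem:den pro}. This is false: $\XX_h$ pairs first-order N\'ed\'elec edge functions on $D$ with continuous piecewise-linear functions on $\Gamma$, and neither component is $C^\infty$, nor even $\H^2(D)$ resp.\ $H^2(\Gamma)$ (N\'ed\'elec elements are merely $\Hcurl{D}$-conforming; tangential components jump across element faces). So Lemma~\ref{lem:den pro} tells you that \emph{piecewise-polynomial} compatible pairs are dense in $\XX$, which is not the same as density of $C^\infty$ compatible pairs. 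Density of smooth compatible pairs in $\XX$ is plausible and standard, but it needs a separate mollification/regularisation argument that respects the trace-compatibility constraint $\bn\times\bxi|_\Gamma=\bn\times\nabla_\Gamma\zeta$; this is not supplied. (To be fair, the paper itself asserts Part~(2) without any argument, so the gap is latent there too --- your attempt to fill it is commendable, but the specific citation does not do the job.) Your converse direction and the continuity of the left-hand side of~\eqref{eq:wssymm2} with respect to $B\in\XX$, via the boundedness of $\dtn$, are fine. Finally, a small point in Part~(1): ellipticity of the continuous $-\dtn$ does not follow from ellipticity of $\hyp$ alone, since $\hyp$ is only semi-elliptic (constants in its kernel); one should instead invoke the ellipticity of the exterior Steklov--Poincar\'e operator directly, or use the Green's-identity representation $-\dual{\dtn\eta}{\eta}_\Gamma=\int_{D^\ast}|\nabla u|^2\,dx$ together with a trace estimate for exterior harmonic functions decaying at infinity.
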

\begin{proof}
The unique solvability of~\eqref{eq:dsymm} follows immediately from the continuity and ellipticity of the bilinear forms $a_h(\cdot,\cdot)$ 
and $b(\cdot,\cdot)$.

The unique solvability of~\eqref{eq:dllg} follows from the positive
definiteness of the left-hand side, the linearity of the right-hand side,
and the finite space dimension.
\end{proof}

The following lemma establishes an energy bound for the discrete
solutions.
\begin{lemma}\label{lem:denergygen}
Under the assumptions~\eqref{eq:hk con} and~\eqref{eq:mh0
Hh0}, there holds
for all $k<2\alpha$ and $j=1,\ldots,N$
\begin{align}\label{eq:denergy}
\sum_{i=0}^{j-1}&
\left(
\norm{\bH_h^{i+1}-\bH_h^i}{\Ltwo{D}}^2+\norm{\lambda_h^{i+1}-\lambda_h^i}{H^{1/2}(\Gamma)}^2
\right)\notag
\\
&+
k
\sum_{i=0}^{j-1}
\norm{\nabla\times \bH_h^{i+1}}{\Ltwo{D}}^2
+ \norm{\bH_h^j}{\Hcurl{D}}^2
+ \norm{\lambda_h^{j}}{H^{1/2}(\Gamma)}^2
+
\norm{\nabla \bm_h^{j}}{\Ltwo{D}}^2\notag
\\
&+
\max\{2\theta-1,0\}k^2
\sum_{i=0}^{j-1}
\norm{\nabla\bv_h^i}{\Ltwo{D}}^2
+
k
\sum_{i=0}^{j-1}
\norm{\bv_h^i}{\Ltwo{D}}^2
\\
&+k \sum_{i=0}^{j-1} (\norm{d_t\bH_h^{i+1}}{\Ltwo{D}}^2 +\norm{d_t\lambda_h^{i+1}}{H^{1/2}(\Gamma)}^2)+\sum_{i=0}^{j-1} 
\norm{\nabla\times (\bH^{i+1}_h-\bH^{i}_h)}{\Ltwo{D}}^2
\leq C_{\rm ener}.\notag
\end{align}
\end{lemma}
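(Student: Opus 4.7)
The plan is to combine separate energy identities for the LLG update \eqref{eq:dllg} and for the FEM/BEM eddy-current update \eqref{eq:dsymm}, then absorb the cross terms via Young's inequality and a discrete Gronwall step; the conditions on $h,k$ in \eqref{eq:hk con} enter only to control a negative term appearing when $\theta<1/2$.

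First, I would test \eqref{eq:dllg} with the admissible function $\bphi_h=\bv_h^i\in\KK_{\bm_h^i}$. Since $(\bm_h^i\times\bv_h^i)\cdot\bv_h^i=0$ pointwise, the cross-product term drops, and using the nodewise update \eqref{eq:mhip1} together with $\bv_h^i(z)\cdot\bm_h^i(z)=0$ gives the identity
\begin{equation*}
\dual{\nabla\bm_h^i}{\nabla\bv_h^i}_D=\tfrac{1}{2k}\bigl(\norm{\nabla\bm_h^{i+1}}{\Ltwo{D}}^2-\norm{\nabla\bm_h^i}{\Ltwo{D}}^2-k^2\norm{\nabla\bv_h^i}{\Ltwo{D}}^2\bigr).
\end{equation*}
Multiplying \eqref{eq:dllg} by $k$ and substituting yields, for every $i$,
\begin{equation*}
\alpha k\norm{\bv_h^i}{\Ltwo{D}}^2+\tfrac{C_e(2\theta-1)}{2}k^2\norm{\nabla\bv_h^i}{\Ltwo{D}}^2+\tfrac{C_e}{2}\bigl(\norm{\nabla\bm_h^{i+1}}{\Ltwo{D}}^2-\norm{\nabla\bm_h^i}{\Ltwo{D}}^2\bigr)=k\dual{\bH_h^i}{\bv_h^i}_D,
\end{equation*}
which after Young's inequality and summation controls $\norm{\nabla\bm_h^j}{\Ltwo{D}}^2$, $k\sum\norm{\bv_h^i}{\Ltwo{D}}^2$, and (when $\theta\geq 1/2$) the term $k^2\sum\norm{\nabla\bv_h^i}{\Ltwo{D}}^2$, modulo a residual $k\sum\norm{\bH_h^i}{\Ltwo{D}}^2$ on the right.

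For the eddy-current part I would exploit the symmetry and ellipticity \eqref{eq:elliptic} of $a_h$ and $b$ established in Lemma~\ref{lem:bil for}. Testing \eqref{eq:eddygeneral} with $B_h=A_h^{i+1}$ and using the symmetric identity $a_h(A_h^{i+1}-A_h^i,A_h^{i+1})=\tfrac{1}{2}[a_h(A_h^{i+1},A_h^{i+1})-a_h(A_h^i,A_h^i)+a_h(A_h^{i+1}-A_h^i,A_h^{i+1}-A_h^i)]$, then summing, telescopes into bounds on $\norm{\bH_h^j}{\Ltwo{D}}^2+\norm{\lambda_h^j}{H^{1/2}(\Gamma)}^2$, the jump sum $\sum(\norm{\bH_h^{i+1}-\bH_h^i}{\Ltwo{D}}^2+\norm{\lambda_h^{i+1}-\lambda_h^i}{H^{1/2}(\Gamma)}^2)$, and $k\sum\norm{\nabla\times\bH_h^{i+1}}{\Ltwo{D}}^2$, again up to a cross term $k\sum\norm{\bv_h^i}{\Ltwo{D}}^2$ handled by Young's inequality. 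To capture $k\sum(\norm{d_t\bH_h^{i+1}}{\Ltwo{D}}^2+\norm{d_t\lambda_h^{i+1}}{H^{1/2}(\Gamma)}^2)$ and $\sum\norm{\nabla\times(\bH_h^{i+1}-\bH_h^i)}{\Ltwo{D}}^2$, I would test \eqref{eq:eddygeneral} a second time with $B_h=d_tA_h^{i+1}\in\XX_h$, apply ellipticity of $a_h$ to the left, and use the identity $b(\bH_h^{i+1},d_t\bH_h^{i+1})=\tfrac{\sigma^{-1}\mu_0^{-1}}{2k}[\norm{\nabla\times\bH_h^{i+1}}{\Ltwo{D}}^2-\norm{\nabla\times\bH_h^i}{\Ltwo{D}}^2+\norm{\nabla\times(\bH_h^{i+1}-\bH_h^i)}{\Ltwo{D}}^2]$.

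Adding the three estimates with suitable weights, choosing the Young parameters small enough using $k<2\alpha$, and invoking \eqref{eq:mh0 Hh0} for the initial data, one is left with an inequality of the form $E_j\leq C+C k\sum_{i<j}E_i$ where $E_j$ denotes the left-hand side of \eqref{eq:denergy}; a discrete Gronwall then yields the claim. The main obstacle is the term $\tfrac{C_e(2\theta-1)}{2}k^2\norm{\nabla\bv_h^i}{\Ltwo{D}}^2$ when $\theta<1/2$: it has the wrong sign. Here I would move it to the right-hand side and apply the standard inverse estimate $\norm{\nabla\bv_h^i}{\Ltwo{D}}^2\leq C_{\rm inv}h^{-2}\norm{\bv_h^i}{\Ltwo{D}}^2$, which produces $Ck^2h^{-2}\norm{\bv_h^i}{\Ltwo{D}}^2$ and is absorbed into $\alpha k\norm{\bv_h^i}{\Ltwo{D}}^2$ precisely under the assumption $k=o(h^2)$ of \eqref{eq:hk con}; for $\theta\geq 1/2$ the term has the correct sign and no coupling of $h$ and $k$ is needed, which is exactly why $\max\{2\theta-1,0\}$ appears on the left of \eqref{eq:denergy}.
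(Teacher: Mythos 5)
Your proposal would indeed prove the lemma, but it takes a genuinely different route from the paper in one crucial step, and the difference is worth spelling out. Both arguments begin the same way: test \eqref{eq:dllg} with $\bphi_h=\bv_h^i$ to obtain the discrete LLG energy identity
\begin{equation*}
\alpha k\norm{\bv_h^i}{\Ltwo{D}}^2+C_e(\theta-\tfrac12)k^2\norm{\nabla\bv_h^i}{\Ltwo{D}}^2+\tfrac{C_e}{2}\bigl(\norm{\nabla\bm_h^{i+1}}{\Ltwo{D}}^2-\norm{\nabla\bm_h^i}{\Ltwo{D}}^2\bigr)=k\dual{\bH_h^i}{\bv_h^i}_D,
\end{equation*}
test \eqref{eq:eddygeneral} with $B_h=A_h^{i+1}$ and later with $B_h=d_tA_h^{i+1}$, and handle the $\theta<1/2$ case with the inverse estimate and \eqref{eq:hk con}. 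Where you diverge is in the treatment of the cross terms: you apply Young's inequality separately to $k\dual{\bH_h^i}{\bv_h^i}_D$ in the LLG identity and to $-k\dual{\bv_h^i}{\bH_h^{i+1}}_D$ in the eddy-current test, producing residuals $k\sum\norm{\bH_h^i}{\Ltwo{D}}^2$ that must then be swept up by a discrete Gronwall step. The paper instead never applies Young to the full LLG cross term; it solves the LLG identity exactly for $k\dual{\bv_h^i}{\bH_h^i}_D$, splits the eddy-current cross term as $-k\dual{\bv_h^i}{\bH_h^i}_D-k\dual{\bv_h^i}{\bH_h^{i+1}-\bH_h^i}_D$, and substitutes so that the dominant parts cancel identically. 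Young is applied only to the small remainder $-k\dual{\bv_h^i}{\bH_h^{i+1}-\bH_h^i}_D$, and the resulting $\tfrac{k}{2\epsilon}\norm{\bH_h^{i+1}-\bH_h^i}{\Ltwo{D}}^2$ is absorbed by the jump term $\tfrac12\sum a_h(A_h^{i+1}-A_h^i,A_h^{i+1}-A_h^i)$ that Abel summation \eqref{equ:Abe} produces. This avoids Gronwall entirely, gives a constant $C_{\rm ener}$ depending only on the initial data (not exponentially on $T$), and makes the hypothesis $k<2\alpha$ transparent: one needs $k<\epsilon<2\alpha$ to keep both $1-k/\epsilon>0$ and $2\alpha-\epsilon>0$. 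Your Gronwall version remains correct and establishes the stated bound, but with a $T$-dependent constant, and in your setup $k<2\alpha$ plays no essential role (any fixed upper bound on $k$ would do). You might reconsider: adding your two energy identities before applying Young would reveal the same cancellation the paper exploits and eliminate the Gronwall step.
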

\begin{proof}
Choosing $B_h=A_h^{i+1}$ in~\eqref{eq:eddygeneral} and
multiplying the resulting equation by $k$ we obtain
\begin{equation}\label{eq:e1}
a_h(A_h^{i+1}-A_h^i,A_h^{i+1})
+
kb(\bH_h^{i+1},\bH_h^{i+1})
= -k\dual{\bv_h^i}{\bH_h^i}_D
 -k\dual{\bv_h^i}{\bH_h^{i+1}-\bH_h^i}_D.
 \end{equation}
On the other hand, it follows from~\eqref{eq:mhip1}
and~\eqref{eq:dllg} that
\begin{align*}
\begin{split}
\norm{\nabla \bm_h^{i+1}}{\Ltwo{D}}^2&=\norm{\nabla \bm_h^{i}}{\Ltwo{D}}^2+k^2\norm{\nabla \bv_h^{i}}{\Ltwo{D}}^2+2k\dual{\nabla\bm_h^i}{\nabla\bv_h^i}_D
\\
 & = \norm{\nabla\bm_h^i}{\Ltwo{D}}^2-2(\theta-\tfrac12)k^2\norm{\nabla\bv_h^i}{\Ltwo{D}}^2
- \frac{2\alpha k}{C_e}\norm{\bv_h^i}{\Ltwo{D}}^2 +
\frac{2k}{C_e}\dual{\bH_h^i}{\bv_h^i}_D,
\end{split}
 \end{align*}
which implies
\begin{align*}
k \dual{\bv_h^i}{\bH_h^i}_D
&=
\frac{C_e}{2}
\left(
\norm{\nabla\bm_h^{i+1}}{\Ltwo{D}}^2
-
\norm{\nabla\bm_h^{i}}{\Ltwo{D}}^2
\right)
+
(\theta-\tfrac12)k^2C_e \norm{\nabla\bv_h^i}{\Ltwo{D}}^2
+
\alpha k \norm{\bv_h^i}{\Ltwo{D}}^2.
\end{align*}
Inserting this into the first term on the right-hand side 
of~\eqref{eq:e1} and rearranging the resulting
equation yield, for any $\epsilon>0$,
\begin{align*}
&a_h(A_h^{i+1}-A_h^i,A_h^{i+1})
+ 
k
b(\bH_h^{i+1},\bH_h^{i+1})
\nonumber
\\
&+
\frac{C_e}{2}
\left(
\norm{\nabla \bm_h^{i+1}}{\Ltwo{D}}^2
-
\norm{\nabla\bm_h^i}{\Ltwo{D}}^2
\right)
+
(\theta-1/2)k^2 C_e
\norm{\nabla\bv_h^i}{\Ltwo{D}}^2
+
\alpha k
\norm{\bv_h^i}{\Ltwo{D}}^2
\nonumber
\\
&=
-k
\dual{\bv_h^i}{\bH_h^{i+1}-\bH_h^{i}}_D
\\
&\leq
\frac{\epsilon k}{2}
\norm{\bv_h^i}{\Ltwo{D}}^2
+
\frac{k}{2\epsilon}
\norm{\bH_h^{i+1}-\bH_h^i}{\Ltwo{D}}^2
\leq
\frac{\epsilon k}{2}
\norm{\bv_h^i}{\Ltwo{D}}^2
+
\frac{k}{2\epsilon}
a_h(A_h^{i+1}-A_h^i,A_h^{i+1}-A_h^i),
\end{align*}
where in the last step we used the definition of $a_h(\cdot,\cdot)$
and~\eqref{eq:dtnelliptic}.
Rearranging gives
\begin{align*}
a_h(A_h^{i+1}-A_h^i,A_h^{i+1})
&+ 
k
b(\bH_h^{i+1},\bH_h^{i+1})
+
\frac{C_e}{2}
\left(
\norm{\nabla \bm_h^{i+1}}{\Ltwo{D}}^2
-
\norm{\nabla\bm_h^i}{\Ltwo{D}}^2
\right)
\\
&+
(\theta-1/2)k^2 C_e
\norm{\nabla\bv_h^i}{\Ltwo{D}}^2
+
(\alpha-\epsilon /2) k
\norm{\bv_h^i}{\Ltwo{D}}^2
\nonumber
\\
&\leq
\frac{k}{2\epsilon}
a_h(A_h^{i+1}-A_h^i,A_h^{i+1}-A_h^i).
\end{align*}
Summing over $i$ from $0$ to $j-1$ and 
(for the first term on the left-hand side)
applying Abel's summation by parts formula
\begin{equation}\label{equ:Abe}
\sum_{i=0}^{j-1}
(u_{i+1}-u_{i})u_{i+1}
=
\frac{1}{2} |u_j|^2
-
\frac{1}{2} |u_0|^2
+
\frac{1}{2}
\sum_{i=0}^{j-1}
|u_{i+1}-u_{i}|^2,
\end{equation}
we deduce, after multiplying the equation by two and rearranging,
\begin{align*}
&(1-k/\epsilon)
\sum_{i=0}^{j-1}
a_h(A_h^{i+1}-A_h^i,A_h^{i+1}-A_h^i)
+
2k
\sum_{i=0}^{j-1}
b(\bH_h^{i+1},\bH_h^{i+1})
+
a_h(A_h^j,A_h^j)
\\
&
+
 C_e
\norm{\nabla \bm_h^{j}}{\Ltwo{D}}^2
+
(2\theta-1)k^2  C_e
\sum_{i=0}^{j-1}
\norm{\nabla\bv_h^i}{\Ltwo{D}}^2
+
(2\alpha-\epsilon ) k
\sum_{i=0}^{j-1}
\norm{\bv_h^i}{\Ltwo{D}}^2
\\
&
\leq 
 C_e
\norm{\nabla \bm_h^{0}}{\Ltwo{D}}^2
+
a_h(A_h^0,A_h^0).
\end{align*}
Since $k<2\alpha$ we can choose $\eps>0$ such
that $2\alpha-\epsilon >0$ and $1-k/\epsilon>0$. By noting the
ellipticity~\eqref{eq:dtnelliptic}, the bilinear forms
$a_h(\cdot,\cdot)$ and $b(\cdot,\cdot)$ are elliptic in their
respective (semi-)norms. We obtain
\begin{align}\label{eq:e4}
\sum_{i=0}^{j-1}&
\left(
\norm{\bH_h^{i+1}-\bH_h^i}{\Ltwo{D}}^2+\norm{\lambda_h^{i+1}-\lambda_h^i}{H^{1/2}(\Gamma)}^2
\right)
+
k
\sum_{i=0}^{j-1}
\norm{\nabla\times \bH_h^{i+1}}{\Ltwo{D}}^2
+ \norm{\bH_h^j}{\Ltwo{D}}^2
\notag
\\
&
+ \norm{\lambda_h^{j}}{H^{1/2}(\Gamma)}^2
+
\norm{\nabla \bm_h^{j}}{\Ltwo{D}}^2
+
(2\theta-1)k^2
\sum_{i=0}^{j-1}
\norm{\nabla\bv_h^i}{\Ltwo{D}}^2
+
k
\sum_{i=0}^{j-1}
\norm{\bv_h^i}{\Ltwo{D}}^2
\notag
\\
& 
\leq 
C
\left(
\norm{\nabla \bm_h^{0}}{\Ltwo{D}}^2
+
\norm{\bH_h^0}{\Ltwo{D}}^2
+ \norm{\lambda_h^{0}}{H^{1/2}(\Gamma)}^2
\right)
\leq
C,
\end{align}
where in the last step we used~\eqref{eq:mh0 Hh0}.

It remains to consider the last three terms on the
left-hand side of~\eqref{eq:denergy}.
Again, we consider~\eqref{eq:eddygeneral} and select
$B_h=d_tA_h^{i+1}$ 
to obtain after multiplication by~$2k$
\begin{align*}
 \begin{split}
 2k a_h(d_tA_h^{i+1},d_tA_h^{i+1})
&
+ 
2b(\bH_h^{i+1},\bH_h^{i+1}-\bH_h^i)
\\
&= -2k\dual{\bv_h^i}{d_t\bH_h^{i+1}}_D
\le
 k \norm{\bv_h^i}{\Ltwo{D}}^2
+
k \norm{d_t\bH_h^{i+1}}{\Ltwo{D}}^2,
\end{split}
\end{align*}
so that, noting~\eqref{eq:e4} and~\eqref{eq:elliptic},
\begin{align}\label{eq:e5}
\begin{split}
k \sum_{i=0}^{j-1} 
\left(
\norm{d_t\bH_h^{i+1}}{\Ltwo{D}}^2 
\right.
&+
\left.
\norm{d_t\lambda_h^{i+1}}{H^{1/2}(\Gamma)}^2
\right)
+  
2 \sum_{i=0}^{j-1} 
b(\bH_h^{i+1},\bH_h^{i+1}-\bH_h^i) 
\\
&
\lesssim
 k 
\sum_{i=0}^{j-1} 
\norm{\bv_h^i}{\Ltwo{D}}^2
\leq
C.
\end{split}
\end{align}
Using Abel's summation by parts formula~\eqref{equ:Abe} for the 
second sum on the left-hand side, and noting
the ellipticity of 
the bilinear form $b(\cdot,\cdot)$ and~\eqref{eq:mh0 Hh0}, we obtain
together with~\eqref{eq:e4}
\begin{align}\label{eq:d4}
\sum_{i=0}^{j-1}&
(\norm{\bH_h^{i+1}-\bH_h^i}{\Ltwo{D}}^2+\norm{\lambda_h^{i+1}-\lambda_h^i}{H^{1/2}(\Gamma)}^2)\notag
\\
&+
k
\sum_{i=0}^{j-1}
\norm{\nabla\times \bH_h^{i+1}}{\Ltwo{D}}^2
+ \norm{\bH_h^j}{\Hcurl{D}}^2
+ \norm{\lambda_h^{j}}{H^{1/2}(\Gamma)}^2
+
\norm{\nabla \bm_h^{j}}{\Ltwo{D}}^2\notag
\\
&+
(2\theta-1)k^2
\sum_{i=0}^{j-1}
\norm{\nabla\bv_h^i}{\Ltwo{D}}^2
+
k
\sum_{i=0}^{j-1}
\norm{\bv_h^i}{\Ltwo{D}}^2
\\
&+k \sum_{i=0}^{j-1} (\norm{d_t\bH_h^{i+1}}{\Ltwo{D}}^2 +\norm{d_t\lambda_h^{i+1}}{H^{1/2}(\Gamma)}^2)+\sum_{i=0}^{j-1} 
\norm{\nabla\times (\bH^{i+1}_h-\bH^{i}_h)}{\Ltwo{D}}^2
\leq 
C.\notag
\end{align}
Clearly, if $1/2\le\theta\le1$ then~\eqref{eq:d4}
yields~\eqref{eq:denergy}. If $0\le\theta<1/2$ then since the mesh
is regular, the inverse
estimate~$\norm{\nabla\bv_h^i}{\Ltwo{D}}\lesssim
h^{-1}\norm{\bv_h^i}{\Ltwo{D}}$ gives
\begin{align*}
(2\theta-1)k^2
\sum_{i=0}^{j-1}
\norm{\nabla\bv_h^i}{\Ltwo{D}}^2
+
k
\sum_{i=0}^{j-1}
\norm{\bv_h^i}{\Ltwo{D}}^2
&\gtrsim
\left(
1-k^2h^{-1}(1-2\theta)
\right)
k
\sum_{i=0}^{j-1}
\norm{\bv_h^i}{\Ltwo{D}}^2
\\
&\gtrsim
k
\sum_{i=0}^{j-1}
\norm{\bv_h^i}{\Ltwo{D}}^2
\end{align*}
as $k^2h^{-1}\to0$ under the assumption~\eqref{eq:hk con}.
This estimate and~\eqref{eq:d4} give~\eqref{eq:denergy},
completing the proof of the lemma.
\end{proof}

Collecting the above results we obtain the following equations
satisfied by the discrete functions
defined from~$\bm_h^i$, $\bH_h^i$, $\lambda_h^i$, and
$\bv_h^i$.

\begin{lemma}\label{lem:mhk Hhk}
Let $\bm_{hk}^{-}$, $A_{hk}^\pm:=(\bH_{hk}^\pm,\lambda_{hk}^\pm)$, and $\bv_{hk}^{-}$ be
defined from $\bm_h^i$, $\bH_h^i$, $\lambda_h^i$, and $\bv_h^i$ as
described in Subsection~\ref{subsec:dis spa}.
Then 
\begin{subequations}\label{eq:mhk Hhk}
\begin{align}
   \alpha\dual{\bv_{hk}^-}{\bphi_{hk}}_{D_T}
&+ 
\dual{(\bm_{hk}^-\times \bv_{hk}^-)}{\bphi_{hk}}_{D_T}
+ 
C_e \theta k \dual{\nabla \bv_{hk}^-}{\nabla \bphi_{hk}}_{D_T}
\nonumber
\\
&=
-C_e \dual{\nabla \bm_{hk}^-}{\nabla \bphi_{hk}}_{D_T}
+
\dual{\bH_{hk}^-}{\bphi_{hk}}_{D_T}
\label{eq:mhk Hhk1}
\\
\intertext{and with~$\partial_t$ denoting time derivative}
\int_0^T 
a_h(\partial_t A_{hk}(t),B_h)
\,dt
&+
\int_0^T 
b(\bH_{hk}^+(t),\bxi_h)
\,dt
=
-\dual{\bv_{hk}^-}{\bxi_h}_{D_T}
\label{eq:mhk Hhk2}
\end{align}
\end{subequations}
for all $\bphi_{hk}$ and $B_h:=(\bxi_h,\zeta_h)$ satisfying
$\bphi_{hk}(t,\cdot)\in\KK_{\bm_h^i}$ for $t\in[t_i,t_{i+1})$ and
$B_h\in\XX_h$.
\end{lemma}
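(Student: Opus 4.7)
The statement is essentially a bookkeeping identity: it rewrites the pointwise-in-time schemes \eqref{eq:dllg} and \eqref{eq:dsymm} as integral identities for the continuous interpolants defined in Subsection~\ref{subsec:dis spa}. My plan is to handle the two identities in turn, using the definitions of $\bm_{hk}^-$, $\bv_{hk}^-$, $A_{hk}^{\pm}$ and the piecewise-linear interpolant $A_{hk}$.

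For \eqref{eq:mhk Hhk1}, I would first fix $i\in\{0,\dots,N-1\}$ and observe that, by definition, $\bm_{hk}^-(t,\cdot)=\bm_h^i$, $\bv_{hk}^-(t,\cdot)=\bv_h^i$, and $\bH_{hk}^-(t,\cdot)=\bH_h^i$ for every $t\in[t_i,t_{i+1})$. The admissibility assumption on $\bphi_{hk}$ guarantees that $\bphi_{hk}(t,\cdot)\in\KK_{\bm_h^i}$ on that subinterval, so \eqref{eq:dllg} may be tested with $\bphi_{hk}(t,\cdot)$ to give a pointwise-in-$t$ identity. Integrating in $t$ over $[t_i,t_{i+1}]$ and summing over $i=0,\dots,N-1$ produces each of the $D_T$-integrals in \eqref{eq:mhk Hhk1} exactly, since the integrands coincide with those of \eqref{eq:dllg} on each subinterval.

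For \eqref{eq:mhk Hhk2}, I would use that $A_{hk}$ is piecewise affine in time with nodal values $A_h^i$, hence $\partial_t A_{hk}(t)=d_t A_h^{i+1}$ on $(t_i,t_{i+1})$, while $\bH_{hk}^+(t,\cdot)=\bH_h^{i+1}$ and $\bv_{hk}^-(t,\cdot)=\bv_h^i$ there. Because the test pair $B_h=(\bxi_h,\zeta_h)\in\XX_h$ is time-independent, the discrete equation \eqref{eq:dsymm}, equivalently \eqref{eq:eddygeneral}, can be multiplied by the characteristic function of $[t_i,t_{i+1}]$ and integrated in $t$; since each of $a_h(\cdot,B_h)$, $b(\cdot,\bxi_h)$ and $\dual{\cdot}{\bxi_h}_D$ is linear in its first argument, this just multiplies \eqref{eq:dsymm} by $k$. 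Summing $i=0,\dots,N-1$ and using $\sum_{i=0}^{N-1}k\,\psi(A_h^{i+1})=\int_0^T \psi(A_{hk}^+(t))\,dt$ and the analogous identity for $\partial_t A_{hk}$ and $\bv_{hk}^-$ yields \eqref{eq:mhk Hhk2}.

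There is no genuine obstacle here: the only point requiring a moment's care is that the test function in \eqref{eq:mhk Hhk1} is allowed to depend on~$t$ (so linearity in the test function has to be invoked $t$-wise before integrating), whereas the test pair in \eqref{eq:mhk Hhk2} is time-independent (so the argument reduces to multiplication by $k$ and summation). Both facts are immediate from the bilinearity of $a_h$, $b$ and the $L^2$ pairing, and from the definitions of $\bm_{hk}^-,\bv_{hk}^-,A_{hk},A_{hk}^\pm$; no estimates from Lemma~\ref{lem:denergygen} are needed.
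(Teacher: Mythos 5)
Your proposal is correct and matches the paper's approach; the paper simply states that the lemma is a direct consequence of \eqref{eq:dllg} and \eqref{eq:eddygeneral}, and your argument spells out exactly why — piecewise-constancy of $\bm_{hk}^-,\bv_{hk}^-,\bH_{hk}^\pm$ and $\partial_t A_{hk}$ on each subinterval, followed by integration in $t$ and summation over $i$.
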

\begin{proof}
The lemma is a direct consequence of~\eqref{eq:dllg} 
and~\eqref{eq:eddygeneral}.
\end{proof}

The next lemma shows that the functions defined in the above
lemma form sequences which have convergent subsequences.
\begin{lemma}\label{lem:weakconv}
Assume that the assumptions~\eqref{eq:hk con} and~\eqref{eq:mh0
Hh0} hold.
As $h$, $k\to0$, the following limits exist up to extraction of
subsequences
\begin{subequations}\label{eq:weakconv}
\begin{alignat}{2}
 \bm_{hk}&\rightharpoonup\bm\quad&&\text{in }\Hone{D_T},\label{eq:wc1}\\
 \bm_{hk}^\pm&\rightharpoonup\bm\quad&&\text{in }
L^2(0,T;\Hone{D}),\label{eq:wc1a}
\\
 \bm_{hk}^\pm
& \rightarrow\bm
\quad&&
\text{in } \Ltwo{D_T}, 
\label{eq:wc2}\\
  (\bH_{hk},\lambda_{hk})&\rightharpoonup(\bH,\lambda)\quad&&\text{in }
L^2(0,T;\XX),\label{eq:wc3}\\
    (\bH_{hk}^\pm,\lambda_{hk}^\pm)&\rightharpoonup(\bH,\lambda)
\quad&&\text{in } L^2(0,T;\XX),\label{eq:wc4}\\
 (\bH_{hk},\lambda_{hk})&\rightharpoonup (\bH,\lambda) \quad&&\text{in } 
H^1(0,T;\Ltwo{D}\times H^{1/2}(\Gamma)),\label{eq:wc31}\\ 
    \bv_{hk}^- &\rightharpoonup \bm_t\quad&&\text{in }\Ltwo{D_T},\label{eq:wc5}
\end{alignat}
\end{subequations}
for certain functions $\bm$, $\bH$, and $\lambda$ satisfying
$\bm\in \Hone{D_T}$, $\bH\in 
H^1(0,T;\Ltwo{D})$, and $(\bH,\lambda)\in 
L^2(0,T;\XX)$. Here~$\rightharpoonup$ denotes the weak convergence
and~$\to$ denotes the strong convergence in the relevant space.

Moreover, if the assumption~\eqref{eq:mh0 Hh0} holds
then there holds additionally $|\bm|=1$ almost everywhere
in~$D_T$.
\end{lemma}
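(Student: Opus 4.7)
The starting point is the uniform energy bound of Lemma~\ref{lem:denergygen}. Writing it in terms of the time-interpolants defined in Subsection~\ref{subsec:dis spa}, one directly reads off uniform (in $h,k$) bounds
\[
\norm{\bm_{hk}}{H^1(D_T)}
+ \norm{\bm_{hk}^\pm}{L^2(0,T;H^1(D))}
+ \norm{(\bH_{hk}^{(\pm)},\lambda_{hk}^{(\pm)})}{L^2(0,T;\XX)}
+ \norm{(\bH_{hk},\lambda_{hk})}{H^1(0,T;\Ltwo{D}\times H^{1/2}(\Gamma))}
+ \norm{\bv_{hk}^-}{\Ltwo{D_T}}
\le C.
\]
Here the bound on $\bm_{hk}^\pm$ in $L^2(0,T;H^1(D))$ uses the $\ell^\infty$-in-time bound on $\norm{\nabla \bm_h^j}{\Ltwo{D}}$ together with the a priori information $\|\bm_h^j\|_{L^\infty(D)}\le C$ that follows from the nodewise identity derived below. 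Since all spaces in question are reflexive, Banach--Alaoglu yields weakly convergent subsequences of each sequence; the constraint $\bn\times\bxi=\bn\times\nabla_\Gamma\zeta$ defining $\XX$ is linear and therefore passes to the weak limit, so the limit of $(\bH_{hk},\lambda_{hk})$ indeed lies in $L^2(0,T;\XX)$.

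Next I would identify limits. A direct computation using $\bm_{hk}^{+}(t)-\bm_{hk}(t)=(1-(t-t_i)/k)(\bm_h^{i+1}-\bm_h^i)=k(1-(t-t_i)/k)\bv_h^i$ on $[t_i,t_{i+1}]$, combined with the energy bound $k\sum_i\norm{\bv_h^i}{\Ltwo{D}}^2\le C$, gives $\norm{\bm_{hk}^\pm-\bm_{hk}}{\Ltwo{D_T}}^2\lesssim k$. The analogous estimate for $\bH$ and $\lambda$ uses the bounds on $\norm{\bH_h^{i+1}-\bH_h^i}{\Ltwo{D}}$ and $\norm{\lambda_h^{i+1}-\lambda_h^i}{H^{1/2}(\Gamma)}$ in Lemma~\ref{lem:denergygen}. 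Hence $\bm_{hk},\bm_{hk}^\pm$ share a common weak limit $\bm\in \Hone{D_T}$, and similarly for $(\bH_{hk},\lambda_{hk})$ and $(\bH_{hk}^\pm,\lambda_{hk}^\pm)$. The strong convergence \eqref{eq:wc2} follows from the Aubin--Lions compactness lemma applied to the embeddings $H^1(D)\hookrightarrow\hookrightarrow\Ltwo{D}\hookrightarrow\Ltwo{D}$, using the bounds on $\bm_{hk}^\pm$ in $L^2(0,T;H^1(D))$ and on $\partial_t\bm_{hk}=\bv_{hk}^-$ in $\Ltwo{D_T}$.

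For \eqref{eq:wc5}, the key observation is that because both $\bm_h^{i+1}$ and $\bm_h^i+k\bv_h^i$ are piecewise linear with identical nodal values (by \eqref{eq:mhip1}), the identity $\bm_h^{i+1}=\bm_h^i+k\bv_h^i$ actually holds globally on $D$. Consequently $\partial_t\bm_{hk}=\bv_{hk}^-$ on each interval $(t_i,t_{i+1})$, and the weak convergence $\bm_{hk}\rightharpoonup\bm$ in $\Hone{D_T}$ immediately gives $\bv_{hk}^-=\partial_t\bm_{hk}\rightharpoonup\bm_t$ in $\Ltwo{D_T}$.

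The main (and only genuinely nontrivial) obstacle is the pointwise constraint $|\bm|=1$. The discrete sphere property is enforced only at nodes: since $\bv_h^i\in\KK_{\bm_h^i}$, one has $\bv_h^i(z)\cdot\bm_h^i(z)=0$ for every $z\in\NN_h$, which by induction gives
\[
|\bm_h^j(z)|^2 = 1 + k^2\sum_{l=0}^{j-1}|\bv_h^l(z)|^2
\quad\text{for every }z\in\NN_h.
\]
Using norm equivalence on $\SS^1(\TT_h)^3$ together with the nodal interpolation operator $\Pi_\SS$, the $L^1(D)$-norm of $\Pi_\SS(|\bm_h^j|^2-1)$ is controlled by $k^2\sum_{l=0}^{j-1}\norm{\bv_h^l}{\Ltwo{D}}^2\le k\cdot C_{\rm ener}$. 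The remaining interpolation defect $\||\bm_h^j|^2-\Pi_\SS(|\bm_h^j|^2)\|_{L^1(D)}$ is bounded by $O(h^2)$ times a local second derivative, which by the $P^1$ structure reduces to $h\norm{\bm_h^j}{L^\infty(D)}\norm{\nabla\bm_h^j}{\Ltwo{D}}$, and hence is $o(1)$ uniformly in $j$ by the energy bound. Therefore $\||\bm_{hk}^-|^2-1\|_{L^1(D_T)}\to 0$; combined with the strong $\Ltwo{D_T}$ convergence $\bm_{hk}^-\to\bm$, we conclude $|\bm|=1$ almost everywhere in $D_T$, completing the proof.
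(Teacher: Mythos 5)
Your overall structure mirrors the paper's (Banach--Alaoglu from the energy bound, identification of the shared limit via $\norm{\bm_{hk}^\pm-\bm_{hk}}{\Ltwo{D_T}}^2\lesssim k$, passing $\partial_t\bm_{hk}=\bv_{hk}^-$ to the limit, and a nodewise argument for $|\bm|=1$), and your use of Aubin--Lions for \eqref{eq:wc2} is a legitimate alternative to the paper's direct appeal to the compact embedding $\Hone{D_T}\hookrightarrow\hookrightarrow\Ltwo{D_T}$. However, there is a genuine gap: you assert $\|\bm_h^j\|_{L^\infty(D)}\le C$ ``follows from the nodewise identity,'' and you invoke it both to get the $L^2(0,T;\Hone{D})$ bound on $\bm_{hk}^\pm$ and in your interpolation-defect estimate. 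The nodewise identity gives $|\bm_h^j(z)|^2=|\bm_h^0(z)|^2+k^2\sum_{l<j}|\bv_h^l(z)|^2$, which is a \emph{lower} bound at each node; an $L^\infty$ \emph{upper} bound would require controlling $\max_z k^2\sum_l|\bv_h^l(z)|^2$, but the energy bound $k\sum_l\norm{\bv_h^l}{\Ltwo{D}}^2\le C$ together with the discrete norm equivalence~\eqref{eq:LT13} and an inverse inequality only gives $\max_z k^2\sum_l|\bv_h^l(z)|^2\lesssim kh^{-3}$, which is unbounded under~\eqref{eq:hk con}. The paper avoids this by working entirely in $L^2$: combining the nodewise identity with~\eqref{eq:LT13} yields $\bigl|\norm{\bm_h^j}{\Ltwo{D}}^2-\norm{\bm_h^0}{\Ltwo{D}}^2\bigr|\lesssim k$, which is all that is needed for the $\Hone{D}$ bound, and the interpolation defect $\||\bm_h^j|^2-\Pi_\SS(|\bm_h^j|^2)\|_{L^1(D)}$ is correctly bounded by $h^2\norm{\nabla\bm_h^j}{\Ltwo{D}}^2\lesssim h^2$ (since $D^2(|\bm_h^j|^2)=2\nabla\bm_h^j\otimes\nabla\bm_h^j$ is constant on each element), again with no $L^\infty$ input.

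Two smaller points. First, your nodewise identity with the constant $1$ implicitly assumes $|\bm_h^0(z)|=1$ at every node, which is not contained in assumption~\eqref{eq:mh0 Hh0}; the paper's proof writes the identity with $|\bm_h^0(z)|^2$ and handles the initial defect via the convergence $\norm{\bm_h^0-\bm^0}{\Ltwo{D}}\to0$ and the inequality $\norm{1-|\bm_h^0|}{L^2(D)}\le\norm{\bm^0-\bm_h^0}{\Ltwo{D}}$. Second, your observation that the $\XX$-constraint is a closed linear condition (hence weakly closed) is correct and a clean way to justify that the limit pair $(\bH,\lambda)$ lands in $L^2(0,T;\XX)$.
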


\begin{proof}
Note that due to the Banach-Alaoglu Theorem, to show the
existence of a weakly convergent subsequence, it suffices to show
the boundedness of the sequence in the respective norm. Thus in
order to prove~\eqref{eq:wc1} we will prove
that $\norm{\bm_{hk}}{\Hone{D_T}}\le C$ for all~$h,k>0$.

By Step~(3) of Algorithm~\ref{algorithm} and due to an idea from~\cite{bartels}, there holds for all $z\in\NN_h$
\begin{align*}
\begin{split}
 |\bm_h^j(z)|^2 &=|\bm_h^{j-1}(z)|^2+k^2|\bv_h^{j-1}(z)|^2=|\bm_h^{j-2}(z)|^2+k^2|\bv_h^{j-1}(z)|^2+k^2|\bv_h^{j-2}(z)|^2\\
 &= |\bm_h^{0}(z)|^2+k^2\sum_{i=0}^{j-1}|\bv_h^{i}(z)|^2.
 \end{split}
\end{align*}
By using the equivalence (see e.g. \cite[Lemma~3.2]{thanh})
\begin{equation}\label{eq:LT13}
\norm{\bphi}{L^p(D)}^p
\simeq
h^3 \sum_{z\in\NN_h}
|\bphi(z)|^p,
\quad 1 \le p < \infty,
\quad \bphi\in \SS^1(\TT_h)^3,
\end{equation}
we deduce that
\begin{align}\label{eq:const}
\left|
\norm{\bm_h^j}{\Ltwo{D}}^2-\norm{\bm_h^0}{\Ltwo{D}}^2
\right|
&
\simeq
h^3
\sum_{z\in\NN_h}
\left(
|\bm_h^j(z)|^2
-
|\bm_h^0(z)|^2
\right)
=
k^2\sum_{i=0}^{j-1}
h^3
\sum_{z\in\NN_h}
|\bv_h^i(z)|^2
\nonumber
\\
&
\simeq
k^2\sum_{i=0}^{j-1}\norm{\bv_h^{i}}{\Ltwo{D}}^2\leq kC_{\rm ener},
\end{align}
where in the last step we used~\eqref{eq:denergy}.
This proves immediately
 \begin{align*}
\norm{\bm_{hk}}{\Ltwo{D_T}}^2\simeq k\sum_{i=1}^N \norm{\bm^i_h}{\Ltwo{D}}^2\leq
k\sum_{i=1}^N \big(\norm{\bm^0_h}{\Ltwo{D}}^2+kC_{\rm ener}\big) 
\leq C.
 \end{align*}

On the other hand, since $\partial_t\bm_{hk} = (\bm_h^{i+1}-\bm_h^i)/k$ on
$(t_i,t_{i+1})$ for $i=0,\ldots,N-1$
and $\bm_h^{i+1}(z)-\bm_h^i(z) = k\bv_h^i(z)$ for all
$z\in\NN_h$, we have by using~\eqref{eq:denergy} and~\eqref{eq:LT13}
 \begin{align}\label{eq:dt mhk}
  \norm{\partial_t\bm_{hk}}{\Ltwo{D_T}}^2
&
=
\sum_{i=0}^{N-1}
\int_{t_j}^{t_{j+1}}
\norm{\partial_t\bm_{hk}}{\Ltwo{D}}^2 \, dt
=
k^{-1}\sum_{i=0}^{N-1} \norm{\bm^{i+1}_h-\bm^i_h}{\Ltwo{D}}^2
\nonumber
\\
&\simeq 
k^{-1}
\sum_{i=0}^{N-1} 
h^3
\sum_{z\in \NN_h} |\bm^{i+1}_h(z)-\bm^i_h(z)|^2
=
k
\sum_{i=0}^{N-1} 
h^3
\sum_{z\in \NN_h} |\bv^i_h(z)|^2
\nonumber
\\
&\simeq
k
\sum_{i=0}^{N-1} 
\norm{\bv_h^i}{\Ltwo{D}}^2
\leq
C_{\rm ener}.
\end{align}
Finally the gradient $\nabla\bm_{hk}$
is shown to be bounded by using~\eqref{eq:denergy}
again as follows:
\begin{align*}
\norm{\nabla\bm_{hk}}{\Ltwo{D_T}}^2\simeq k\sum_{i=1}^N
\norm{\nabla\bm^i_h}{\Ltwo{D}}^2\leq C_{\rm ener}kN\leq C_{\rm
energy}T.
 \end{align*}
 Altogether, we showed that $\{\bm_{hk}\}$
is a bounded sequence in $\Hone{D_T}$ and thus posesses a weakly convergent
subsequence, i.e., we proved~\eqref{eq:wc1}. 

In particular,~\eqref{eq:mh0 Hh0}, \eqref{eq:denergy},
and~\eqref{eq:const} imply
\begin{equation}\label{eq:mhk pm}
\norm{\bm_{hk}^\pm}{L^2(0,T;\Hone{D})}
\le
\norm{\bm_{hk}^\pm}{L^\infty(0,T;\Hone{D})}
\lesssim
C_{\rm ener},
\end{equation}
yielding~\eqref{eq:wc1a}.

We prove~\eqref{eq:wc2} for $\bm_{hk}^-$ only; similar arguments
hold for $\bm_{hk}^+$. First, we note that the definition
of~$\bm_{hk}$ and~$\bm_{hk}^-$, and the estimate~\eqref{eq:dt mhk}
imply, for all $t\in[t_j,t_{j+1})$,
\begin{align*}
\norm{\bm_{hk}(t,\cdot)-\bm_{hk}^-(t,\cdot)}{\Ltwo{D}}
&=
\norm{(t-t_j)\frac{\bm_{h}^{j+1}-\bm_{h}^j}{k}}{\Ltwo{D}}
\le
k\norm{\partial_t\bm_{hk}(t,\cdot)}{\Ltwo{D}}
\lesssim
k C_{\rm ener}.
\end{align*}
This in turn implies
\[
\norm{\bm_{hk}-\bm_{hk}^-}{\Ltwo{D_T}}
\lesssim
kT C_{\rm ener}
\to 0 \quad\text{as }h,k\to0.
\]
Thus,~\eqref{eq:wc2} follows from the triangle
inequality,~\eqref{eq:wc1}, and the Sobolev embedding.

Statement~\eqref{eq:wc3} follows immediately
from~\eqref{eq:denergy} by noting that
 \begin{align*}
  \norm{(\bH_{hk},\lambda_{hk})}{L^2(0,T;\XX)}^2
\simeq 
k\sum_{i=1}^N 
\big(
\norm{\bH_h^i}{\Hcurl{D}}^2+\norm{\lambda_{h}^i}{H^{1/2}(\Gamma)}^2
\big)
\leq 
kNC_{\rm ener}\leq TC_{\rm ener}.
 \end{align*}
The proof of~\eqref{eq:wc4} follows analogously. Consequently, we
obtain~\eqref{eq:wc31} by using again~\eqref{eq:denergy} and the
above estimate as follows:
\begin{align*}
  \norm{\bH_{hk}}{H^1(0,T;\Ltwo{D})}^2
&\simeq 
\norm{\bH_{hk}}{\Ltwo{D_T}}^2 + k\sum_{i=1}^N\norm{d_t\bH_h^i}{\Ltwo{D}}^2
\leq TC_{\rm ener}+C_{\rm ener}.
\end{align*}
The convergence of $\lambda_{hk}$ in the statement follows analogously.
Finally,~\eqref{eq:wc5} follows from $\partial_t\bm_{hk}(t)=\bv^-_{hk}(t)$ and~\eqref{eq:wc1}. 

To show that $\bm$ satisfies the constraint $|\bm|=1$,
we first note that
 \begin{align*}
  \norm{|\bm|-1}{L^2(D_T)}\leq \norm{\bm-\bm_{hk}}{\Ltwo{D_T}}+\norm{|\bm_{hk}|-1}{L^2(D_T)}.
 \end{align*}
The first term on the right-hand side
converges to zero due to~\eqref{eq:wc1} and the
compact embedding of~$\Hone{D_T}$ in~$\Ltwo{D_T}$. 
For the second term, we note that
\begin{align*}
\norm{1-|\bm_{hk}|}{L^2(D_T)}^2
&\lesssim
k\sum_{j=0}^N\big(\norm{|\bm_{h}^j|-|\bm_h^0|}{L^2(D)}^2
+
\norm{1-|\bm_h^0|}{L^2(D)}^2\big)
\nonumber\\
&\leq  k\sum_{j=0}^N\big(\norm{|\bm_{h}^j|^2
-
|\bm_h^0|^2}{L^1(D)}+\norm{|\bm^0|-|\bm_h^0|}{L^2(D)}^2\big)
\end{align*}
where we used $(x-y)^2\leq |x^2-y^2|$ for all $x,y\ge0$.
Similarly to~\eqref{eq:const} it can be shown that
\begin{equation}\label{eq:mhj mh0}
\norm{|\bm_h^j|^2-|\bm_h^0|^2}{L^1(D)}
\simeq
k^2\sum_{i=0}^{j-1}
\norm{\bv_h^{i}}{\Ltwo{D}}^2
\leq kC_{\rm ener}.
\end{equation}
Hence
\[
\norm{1-|\bm_{hk}|}{L^2(D_T)}^2
\leq kC_{\rm ener} +\norm{\bm^0-\bm_h^0}{\Ltwo{D_T}}^2\to
0\quad\text{as }h,k\to 0.
\]
Altogether, we showed $|\bm|=1$ almost everywhere in~$D_T$,
completing the proof of the lemma.
\end{proof}

We also need the following strong convergence property.
\begin{lemma}\label{lem:str con}
Under the assumptions~\eqref{eq:hk 12} and~\eqref{eq:mh0 Hh0} there holds
\begin{equation}\label{eq:mhk h12}
\norm{\bm_{hk}^--\bm}{L^2(0,T;\H^{1/2}(D))}
\to 0
\quad\text{as } h,k\to 0.
\end{equation}
\end{lemma}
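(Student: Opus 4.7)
The plan is to upgrade the $\Ltwo{D_T}$ strong convergence of $\bm_{hk}^-$ already established in Lemma~\ref{lem:weakconv} to strong convergence in $L^2(0,T;\H^{1/2}(D))$ by interpolating between $\Ltwo{D}$ and $\Hone{D}$. The starting point is the standard interpolation inequality
\[
\|u\|_{\H^{1/2}(D)} \le C \, \|u\|_{\Ltwo{D}}^{1/2} \, \|u\|_{\Hone{D}}^{1/2}
\qquad\text{for all } u \in \Hone{D},
\]
which is valid on bounded Lipschitz domains. I apply this pointwise in $t$ to the difference $u = \bm_{hk}^-(t,\cdot) - \bm(t,\cdot)$, which lies in $\Hone{D}$ for a.e.~$t$.

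Squaring, integrating over $(0,T)$, and applying Cauchy--Schwarz in time gives
\[
\|\bm_{hk}^- - \bm\|_{L^2(0,T;\H^{1/2}(D))}^2
\le C\,
\|\bm_{hk}^- - \bm\|_{L^2(0,T;\Ltwo{D})} \;
\|\bm_{hk}^- - \bm\|_{L^2(0,T;\Hone{D})}.
\]
The first factor tends to zero as $h,k\to 0$ by the strong convergence~\eqref{eq:wc2} from Lemma~\ref{lem:weakconv}, which holds already under the weaker assumption~\eqref{eq:hk con}. For the second factor, the bound~\eqref{eq:mhk pm} established in the proof of Lemma~\ref{lem:weakconv} shows that $\{\bm_{hk}^-\}$ is uniformly bounded in $L^\infty(0,T;\Hone{D})$, hence a fortiori in $L^2(0,T;\Hone{D})$; the weak limit $\bm$ lies in $\Hone{D_T}$ by~\eqref{eq:wc1} and in particular in $L^2(0,T;\Hone{D})$, so $\|\bm_{hk}^- - \bm\|_{L^2(0,T;\Hone{D})}$ is uniformly bounded by a constant depending only on $C_{\rm ener}$ and the initial data.

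Combining these two facts, the product on the right-hand side tends to zero as $h,k\to 0$, which yields~\eqref{eq:mhk h12}.

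The main conceptual point is simply that interpolation lets us trade a half derivative of regularity for half a power of the (vanishing) $L^2$ error. No obstacle arises in the argument itself; the stronger coupling~\eqref{eq:hk 12} between $h$ and $k$ that appears in the hypothesis is inherited from its role elsewhere (it will be needed when identifying the weak limit of the nonlinear term $\bm_{hk}^-\times\bv_{hk}^-$ and when passing to the limit in the LLG equation), but the interpolation step above only requires the strong $\Ltwo{D_T}$ convergence of $\bm_{hk}^-$ and the uniform $\Hone{D}$ bound, both already available.
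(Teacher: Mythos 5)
Your proof is correct but genuinely different from the paper's. The paper first splits via the triangle inequality into $\bm_{hk}^--\bm_{hk}$ and $\bm_{hk}-\bm$; the second piece is handled by the Aubin--Lions compactness $\Hone{D_T}\hookrightarrow L^2(0,T;\H^{1/2}(D))$, while the first piece is estimated directly from the discrete energy bound, using an inverse inequality and the step-size coupling~\eqref{eq:hk 12} when $0\le\theta\le 1/2$ to control $\sum_i k^3\norm{\bv_h^i}{\Hone{D}}^2$. Your argument instead interpolates the target $\H^{1/2}$-norm between $\Ltwo{D}$ and $\Hone{D}$ and feeds in the already-established $\Ltwo{D_T}$ strong convergence~\eqref{eq:wc2} together with the uniform $\Hone{D}$-bound~\eqref{eq:mhk pm}; Cauchy--Schwarz in time then closes the estimate. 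What your route buys: it never touches the intermediate quantity $\bm_{hk}^--\bm_{hk}$, it needs no inverse inequality, it avoids invoking Aubin--Lions into $\H^{1/2}$ (only the weaker compactness that already underlies~\eqref{eq:wc2}), and---as you correctly point out---it only requires the weaker assumption~\eqref{eq:hk con} rather than~\eqref{eq:hk 12}, because both ingredients~\eqref{eq:wc2} and~\eqref{eq:mhk pm} are proved under~\eqref{eq:hk con}. In this sense your proof sharpens the hypothesis at $\theta=1/2$, where~\eqref{eq:hk 12} demands $k=o(h)$ and~\eqref{eq:hk con} demands nothing. The interpolation inequality itself is legitimate on Lipschitz domains, consistent with the identification $[\Ltwo{D},\Hone{D}]_{1/2}=\H^{1/2}(D)$ already used by the paper in Lemma~\ref{lem:h}.
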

\begin{proof}
It follows from the triangle inequality and the definitions of~$\bm_{hk}$
and~$\bm_{hk}^-$ that
\begin{align*}
 \norm{\bm_{hk}^--\bm}{L^2(0,T;\H^{1/2}(D))}^2
&\lesssim 
\norm{\bm_{hk}^--\bm_{hk}}{L^2(0,T;\H^{1/2}(D))}^2
+
\norm{\bm_{hk}-\bm}{L^2(0,T;\H^{1/2}(D))}^2 \\
&\leq
\sum_{i=0}^{N-1}
k^3\norm{\bv_h^i}{\H^{1/2}(D)}^2
+
\norm{\bm_{hk}-\bm}{L^2(0,T;\H^{1/2}(D))}^2 \\
&\leq 
\sum_{i=0}^{N-1}k^3\norm{\bv_h^i}{\Hone{D}}^2
+
\norm{\bm_{hk}-\bm}{L^2(0,T;\H^{1/2}(D))}^2.
\end{align*}
The second term on the right-hand side converges to zero due
to~\eqref{eq:wc1} and the compact embedding of
\[
\Hone{D_T}
\simeq
\{\bv \, | \,
\bv\in L^2(0,T;\Hone{D}), \, \bv_t\in L^2(0,T;\Ltwo{D}) \}
\]
into $L^2(0,T;\H^{1/2}(D))$; see \cite[Theorem 5.1]{Lio69}.
For the first term on the right-hand side,
when $\theta>1/2$,~\eqref{eq:denergy} implies
$
\sum_{i=0}^{N-1}k^3\norm{\bv_h^i}{\Hone{D}}^2
\lesssim k \to 0.
$
When $0\le\theta\leq 1/2$, a standard inverse inequality,~\eqref{eq:denergy}
and~\eqref{eq:hk 12} yield
\[
\sum_{i=0}^{N-1}k^3\norm{\bv_h^i}{\Hone{D}}^2
\lesssim 
\sum_{i=0}^{N-1}h^{-2}k^3\norm{\bv_h^i}{\Ltwo{D}}^2
\lesssim 
h^{-2}k^2 \to 0,
\]
completing the proof of the lemma.
\end{proof}

The following lemma involving the $\L^2$-norm of the cross product
of two vector-valued functions will be used when passing
to the limit of equation~\eqref{eq:mhk Hhk1}.
\begin{lemma}\label{lem:h}
There exists a constant $C_{\rm sob}>0$ which depends only on
$D$ such that
\begin{equation}
\label{eq:l2}
  \norm{\bw_{0}\times\bw_1}{\Ltwo{D}}
\leq 
C_{\rm sob}
\norm{\bw_{0}}{\H^{1/2}(D)}\norm{\bw_1}{\Hone{D}}.
 \end{equation}
 for all $\bw_0\in\H^{1/2}(D)$ and $\bw_{1}\in\Hone{D}$.
 \end{lemma}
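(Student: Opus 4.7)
The plan is to derive the inequality by combining a pointwise bound on the cross product with Hölder's inequality and the Sobolev embedding theorem on the three-dimensional domain $D$.

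First I would use the pointwise estimate $|\bw_0(x)\times\bw_1(x)|\le|\bw_0(x)||\bw_1(x)|$ valid for all $x\in D$, which reduces the question to bounding $\||\bw_0||\bw_1|\|_{L^2(D)}$. Next I would apply Hölder's inequality with the exponent pair $(3,6)$, noting that $1/3+1/6=1/2$, to obtain
\begin{equation*}
\norm{\bw_0\times\bw_1}{\Ltwo{D}}
\le
\norm{\bw_0}{\L^3(D)}\,\norm{\bw_1}{\L^6(D)}.
\end{equation*}

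The final step invokes two Sobolev embeddings specific to dimension three: since $D\subset\R^3$, the continuous embeddings $\H^{1/2}(D)\hookrightarrow\L^3(D)$ (critical exponent $1/3=1/2-(1/2)/3$) and $\Hone{D}\hookrightarrow\L^6(D)$ (critical exponent $1/6=1/2-1/3$) yield
\begin{equation*}
\norm{\bw_0}{\L^3(D)}\lesssim\norm{\bw_0}{\H^{1/2}(D)}
\quad\text{and}\quad
\norm{\bw_1}{\L^6(D)}\lesssim\norm{\bw_1}{\Hone{D}},
\end{equation*}
where the hidden constants depend only on $D$. Multiplying these bounds and combining with the Hölder estimate gives the desired inequality with $C_{\rm sob}$ as the product of the two embedding constants.

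There is no real obstacle here; the whole argument is a direct application of standard tools. The only thing to keep in mind is the dimensional restriction: the choice of the exponent pair $(3,6)$ and both Sobolev embeddings are sharp precisely because $\dim D=3$, so the proof uses in an essential way that $D\subset\R^3$ (which is the setting of the paper). Both embeddings are valid for bounded Lipschitz domains, which matches the assumptions on $D$ from Subsection~\ref{subsec:pro}.
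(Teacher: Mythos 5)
Your proof is essentially the same as the paper's: both reduce the estimate to H\"older's inequality with the exponent pair $(3,6)$ and the embeddings $\H^{1/2}(D)\hookrightarrow\L^3(D)$ and $\Hone{D}\hookrightarrow\L^6(D)$. The only difference is cosmetic: you cite the fractional Sobolev embedding for $\H^{1/2}(D)\hookrightarrow\L^3(D)$ directly, whereas the paper derives it by real interpolation between $\Ltwo{D}\hookrightarrow\Ltwo{D}$ and $\Hone{D}\hookrightarrow\L^6(D)$, using the identifications $[\Ltwo{D},\Hone{D}]_{1/2}=\H^{1/2}(D)$ and $[\Ltwo{D},\L^6(D)]_{1/2}=\L^3(D)$; both justifications are valid for bounded Lipschitz domains.
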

\begin{proof}
It is shown in~\cite[Theorem~5.4, Part~I]{adams} that
the embedding $\iota\colon\Hone{D}\to\L^6(D)$ is continuous.
Obviously, the identity $\iota\colon \Ltwo{D}\to\Ltwo{D}$ is
continous. By real interpolation, we find that $\iota\colon
[\Ltwo{D},\Hone{D}]_{1/2}\to [\Ltwo{D},\L^6(D)]_{1/2}$ is
continuous. Well-known results in interpolation theory show
$
[\Ltwo{D},\Hone{D}]_{1/2}= \H^{1/2}(D)
$
and
$
[\Ltwo{D},\L^6(D)]_{1/2}=\L^3(D)
$
with equivalent norms; see e.g.~\cite[Theorem~5.2.1]{BL}.
By using H\"older's inequality, we deduce
\begin{align*}
 \norm{\bw_{0}\times\bw_{1}}{\Ltwo{D}}\leq
\norm{\bw_{0}}{\L^3(D)}\norm{\bw_{1}}{\L^6(D)}
\lesssim
\norm{\bw_{0}}{\H^{1/2}(D)}\norm{\bw_1}{\Hone{D}},
\end{align*}
proving the lemma.
\end{proof}

Finally, to pass to the limit in equation~\eqref{eq:mhk Hhk2} we
need the following result.
\begin{lemma}\label{lem:wea con}
For any sequence~$\{\lambda_h\}\subset H^{1/2}(\Gamma)$ 
and any function~$\lambda\in H^{1/2}(\Gamma)$, if
\begin{equation}\label{eq:zet con}
\lim_{h\to0}
\dual{\lambda_h}{\nu}_{\Gamma}
=
\dual{\lambda}{\nu}_{\Gamma}
\quad\forall\nu\in H^{-1/2}(\Gamma)
\end{equation}
then
\begin{equation}\label{eq:dtn zet con}
\lim_{h\to0}
\dual{\dtn_h\lambda_h}{\zeta}_{\Gamma}
=
\dual{\dtn\lambda}{\zeta}_{\Gamma}
\quad\forall\zeta\in H^{1/2}(\Gamma).
\end{equation}
\end{lemma}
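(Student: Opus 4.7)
The plan is to reduce the claim to two ingredients: (a) the symmetry of~$\dtn_h$ under Costabel's coupling, and (b) the pointwise strong convergence $\dtn_h\zeta\to\dtn\zeta$ in $H^{-1/2}(\Gamma)$ for each fixed $\zeta\in H^{1/2}(\Gamma)$. The Banach--Steinhaus principle applied to~\eqref{eq:zet con} yields $\sup_h\|\lambda_h\|_{H^{1/2}(\Gamma)}<\infty$, which I shall use to absorb the error between $\dtn$ and $\dtn_h$.

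For~(a), write $\mu_h(\lambda),\mu_h(\zeta)\in\PP^0(\TT_h|_\Gamma)$ for the Galerkin solutions of the second equation in~\eqref{eq:bem} with data $\lambda$ and $\zeta$ respectively. Testing each Galerkin equation with the other discrete solution and using symmetry of~$\slp$ gives $\dual{\mu_h(\lambda)}{(1/2-\dlp)\zeta}_\Gamma = \dual{\mu_h(\zeta)}{(1/2-\dlp)\lambda}_\Gamma$; combined with symmetry of~$\hyp$, this produces $\dual{\dtn_h\lambda}{\zeta}_\Gamma=\dual{\dtn_h\zeta}{\lambda}_\Gamma$ via the representation~\eqref{eq:bem2}. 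For~(b), setting $\mu(\zeta):=\slp^{-1}(\dlp-1/2)\zeta$, the identity $\dtn_h\zeta-\dtn\zeta=(1/2-\dlp^\prime)\bigl(\mu_h(\zeta)-\mu(\zeta)\bigr)$, continuity of $\dlp^\prime$ on $H^{-1/2}(\Gamma)$, and C\'ea's lemma (based on $H^{-1/2}$-ellipticity of~$\slp$) reduce the task to $\inf_{\nu_h\in\PP^0(\TT_h|_\Gamma)}\|\mu(\zeta)-\nu_h\|_{H^{-1/2}(\Gamma)}\to 0$, which is the density of piecewise constants in $H^{-1/2}(\Gamma)$.

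Equipped with (a) and (b), I decompose $\dual{\dtn_h\lambda_h}{\zeta}_\Gamma = \dual{\lambda_h}{\dtn_h\zeta}_\Gamma = \dual{\lambda_h}{\dtn\zeta}_\Gamma + \dual{\lambda_h}{(\dtn_h-\dtn)\zeta}_\Gamma$. The first summand converges to $\dual{\lambda}{\dtn\zeta}_\Gamma=\dual{\dtn\lambda}{\zeta}_\Gamma$ by~\eqref{eq:zet con} applied with the test function $\dtn\zeta\in H^{-1/2}(\Gamma)$ and the symmetry of the continuous operator~$\dtn$. The second summand is bounded by $\|\lambda_h\|_{H^{1/2}(\Gamma)}\|(\dtn_h-\dtn)\zeta\|_{H^{-1/2}(\Gamma)}$ and therefore vanishes by~(b) together with the Banach--Steinhaus boundedness. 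The principal difficulty is ingredient~(a): without the symmetry of~$\dtn_h$ there is no direct mechanism to transfer the weak convergence hypothesis on~$\lambda_h$ onto $\dtn_h\lambda_h$, since BEM error estimates alone would only yield uniform boundedness of the sequence $\dtn_h\lambda_h$ in $H^{-1/2}(\Gamma)$, not its weak convergence against $H^{1/2}$-functions.
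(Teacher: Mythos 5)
Your strategy---transpose $\dtn_h$ onto the test function using symmetry, then invoke pointwise consistency $\dtn_h\zeta\to\dtn\zeta$ plus Banach--Steinhaus---is a different route from the paper's, and the underlying idea is sound, but the symmetry identity on which it rests fails for the operator the paper actually defines. The paper specifies $\dtn_h\colon H^{1/2}(\Gamma)\to\SS^1(\TT_h|_\Gamma)$: equation~\eqref{eq:bem2} determines $\dtn_h\lambda$ as an element of the \emph{discrete} space $\SS^1(\TT_h|_\Gamma)$, by requiring the stated identity only against test functions $\zeta_h\in\SS^1(\TT_h|_\Gamma)$. Consequently, for a generic $\zeta\in H^{1/2}(\Gamma)$ the pairing $\dual{\dtn_h\lambda}{\zeta}_\Gamma$ equals $s_h(\lambda,P_h\zeta)$ and not $s_h(\lambda,\zeta)$, where $P_h$ is the $L^2$-projection onto $\SS^1(\TT_h|_\Gamma)$ and $s_h(\lambda,\zeta):=\dual{(1/2-\dlp^\prime)\mu_h(\lambda)}{\zeta}_\Gamma-\dual{\hyp\lambda}{\zeta}_\Gamma$. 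Your Galerkin--reciprocity computation correctly proves $s_h$ is symmetric, but then $\dual{\dtn_h\lambda}{\zeta}_\Gamma=s_h(\lambda,P_h\zeta)$ while $\dual{\dtn_h\zeta}{\lambda}_\Gamma=s_h(\zeta,P_h\lambda)$, and these coincide only if both arguments already lie in $\SS^1(\TT_h|_\Gamma)$---which the lemma does not assume, since $\zeta\in H^{1/2}(\Gamma)$ is arbitrary. The same hidden projection also invalidates your identity $\dtn_h\zeta-\dtn\zeta=(1/2-\dlp^\prime)\bigl(\mu_h(\zeta)-\mu(\zeta)\bigr)$: its left side lives in $\SS^1(\TT_h|_\Gamma)$ whereas its right side does not, so the claimed strong $H^{-1/2}$-convergence of $\dtn_h\zeta$ does not follow from C\'ea's lemma alone.

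The paper avoids the issue by never applying the representation~\eqref{eq:bem2} to a non-discrete test function: it introduces $\zeta_h\in\SS^1(\TT_h|_\Gamma)$ with $\zeta_h\to\zeta$ in $H^{1/2}(\Gamma)$, expands $\dual{\dtn_h\lambda_h-\dtn\lambda}{\zeta_h}_\Gamma$ using~\eqref{eq:bem2} and~\eqref{eq:dtn2} (legitimate since $\zeta_h$ is discrete), and controls the remainder $\dual{\dtn_h\lambda_h-\dtn\lambda}{\zeta-\zeta_h}_\Gamma$ by the uniform bound~\eqref{eq:dtnelliptic}; the core term $\dual{(1/2-\dlp^\prime)(\mu_h-\mu)}{\zeta_h}_\Gamma$ is then driven to zero by a second Galerkin-duality argument on $\slp$. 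Your proposal can be repaired by inserting the same discrete approximant $\zeta_h$ before you invoke the bilinear-form symmetry, absorbing the $\zeta-\zeta_h$ error via~\eqref{eq:dtnelliptic}, and tracking the $h$-dependence of the data $\zeta_h$ through the C\'ea estimate for $\mu_h(\zeta_h)$---but once those repairs are made the argument is essentially the paper's, with the symmetry of $s_h$ playing the role that the paper's two explicit dualizations of $\slp$ and $\hyp$ play.
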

\begin{proof}
Let~$\mu$ and~$\mu_h$ be defined
by~\eqref{eq:bem} with~$\lambda$ in the second equation replaced 
by~$\lambda_h$. Then (recalling that Costabel's symmetric
coupling is used) $\dtn\lambda$ and~$\dtn_h\lambda_h$ are
defined via~$\mu$ and~$\mu_h$ by~\eqref{eq:dtn2} 
and~\eqref{eq:bem2}, respectively, namely,
$\dtn\lambda = (1/2-\dlp^\prime)\mu - \hyp\lambda$ and
$
\dual{\dtn_h\lambda_h}{\zeta_h}_\Gamma 
= 
\dual{(1/2-\dlp^\prime)\mu_h}{\zeta_h}_\Gamma 
-
\dual{\hyp \lambda_h}{\zeta_h}_\Gamma
$
for all~$\zeta_h\in \SS^1(\TT_h|_\Gamma)$.
For any~$\zeta\in H^{1/2}(\Gamma)$, 
let~$\{\zeta_h\}$ be a sequence
in~$\SS^1(\TT_h|_\Gamma)$ 
satisfying~$\lim_{h\to0}\norm{\zeta_h-\zeta}{H^{1/2}(\Gamma)}=0$.
By using the triangle inequality and the above representations
of~$\dtn\lambda$ and~$\dtn_h\lambda_h$ we deduce
\begin{align}\label{eq:dtn dtn}
\big|
\dual{\dtn_h\lambda_h}{\zeta}
-
\dual{\dtn\lambda}{\zeta}_\Gamma
\big| 
&\leq 
\big|\dual{\dtn_h\lambda_h-\dtn\lambda}{\zeta_h}_\Gamma\big|
+
\big|\dual{\dtn_h\lambda_h-\dtn\lambda}{\zeta-\zeta_h}_\Gamma\big|
\notag
\\ 
&\le
\big|
\dual{(\tfrac12-\dlp^\prime)(\mu_h-\mu)}{\zeta_h}_\Gamma 
\big|
+
\big|
\dual{\hyp (\lambda_h-\lambda)}{\zeta_h}_\Gamma
\big|
\notag
\\
&\quad
+
\big|\dual{\dtn_h\lambda_h-\dtn\lambda}{\zeta-\zeta_h}_\Gamma\big|
\notag
\\
&\le
\big|
\dual{(\tfrac12-\dlp^\prime)(\mu_h-\mu)}{\zeta_h}_\Gamma 
\big|
+
\big|
\dual{\hyp (\lambda_h-\lambda)}{\zeta}_\Gamma
\big|
\notag
\\
&\quad
+
\big|
\dual{\hyp (\lambda_h-\lambda)}{\zeta_h-\zeta}_\Gamma
\big|
+
\big|\dual{\dtn_h\lambda_h-\dtn\lambda}{\zeta-\zeta_h}_\Gamma\big|.
\end{align}
The second term on the right-hand side of~\eqref{eq:dtn dtn}
goes to zero as~$h\to0$ due to~\eqref{eq:zet con} and the
self-adjointness of~$\hyp$. The third term converges to zero
due to the strong convergence~$\zeta_h\to\zeta$
in~$H^{1/2}(\Gamma)$ and the boundedness of~$\{\lambda_h\}$
in~$H^{1/2}(\Gamma)$, which is a consequence of~\eqref{eq:zet
con} and the Banach-Steinhaus Theorem. The last term tends to
zero due to the convergence of~$\{\zeta_h\}$ and the boundedness
of~$\{\dtn_h\lambda_h\}$; see~\eqref{eq:dtnelliptic}.
Hence~\eqref{eq:dtn zet con} is proved if we prove
\begin{equation}\label{eq:muh mu}
\lim_{h\to0}
\dual{(1/2-\dlp^\prime)(\mu_h-\mu)}{\zeta_h}_{\Gamma}
= 0.
\end{equation}
We have
\begin{align}\label{eq:muh mu3}
\dual{(\tfrac12-\dlp^\prime)(\mu_h-\mu)}{\zeta_h}_\Gamma 
=
\dual{\mu_h-\mu}{(\tfrac12-\dlp)\zeta}_\Gamma 
+
\dual{\mu_h-\mu}{(\tfrac12-\dlp)(\zeta_h-\zeta)}_\Gamma. 
\end{align}
The definition of~$\mu_h$ implies
$
\norm{\mu_h}{H^{-1/2}(\Gamma)}
\lesssim
\norm{\lambda_h}{H^{1/2}(\Gamma)}
\lesssim
1,
$
and therefore the second term on the right-hand side
of~\eqref{eq:muh mu3} goes to zero.
Hence it suffices to prove
\begin{equation}\label{eq:muh mu2}
\lim_{h\to0}
\dual{\mu_h-\mu}{\eta}_\Gamma
= 0
\quad\forall\eta\in H^{1/2}(\Gamma).
\end{equation}
Since~$\slp : H^{-1/2}(\Gamma) \to
H^{1/2}(\Gamma)$ is bijective and self-adjoint, 
for any~$\eta\in H^{1/2}(\Gamma)$
there exists~$\nu\in H^{-1/2}(\Gamma)$ such that
\[
\dual{\mu_h-\mu}{\eta}_{\Gamma}
=
\dual{\mu_h-\mu}{\slp\nu}_{\Gamma}
=
\dual{\slp(\mu_h-\mu)}{\nu}_{\Gamma}
=
\dual{\slp(\mu_h-\mu)}{\nu_h}_{\Gamma}
+
\dual{\slp(\mu_h-\mu)}{\nu-\nu_h}_{\Gamma},
\]
where~$\{\nu_h\}\subset\PP^0(\TT_h|\Gamma)$ is a sequence
satisfying~$\norm{\nu_h-\nu}{H^{-1/2}(\Gamma)}\to0$. 
The definitions of~$\mu_h$ and~$\mu$, and the above equation imply
\begin{align*}
\dual{\mu_h-\mu}{\eta}_{\Gamma}
&=
\dual{(\dlp-\tfrac12)(\lambda_h-\lambda)}{\nu_h}_{\Gamma}
+
\dual{\slp(\mu_h-\mu)}{\nu-\nu_h}_{\Gamma}
\\
&=
\dual{\lambda_h-\lambda}{(\dlp^\prime-\tfrac12)\nu_h}_{\Gamma}
+
\dual{\slp(\mu_h-\mu)}{\nu-\nu_h}_{\Gamma}
\\
&=
\dual{\lambda_h-\lambda}{(\dlp^\prime-\tfrac12)\nu}_{\Gamma}
+
\dual{\lambda_h-\lambda}{(\dlp^\prime-\tfrac12)(\nu_h-\nu)}_{\Gamma}
+
\dual{\slp(\mu_h-\mu)}{\nu-\nu_h}_{\Gamma}.
\end{align*}
The first two terms on the right-hand side go to zero due to the
convergence of~$\{\lambda_h\}$ and~$\{\nu_h\}$. The last term
also approaches zero if we note the boundedness of~$\{\mu_h\}$.
This proves~\eqref{eq:muh mu2} and completes the proof of the lemma.
\end{proof}

\subsection{Proof of Theorem~\ref{thm:weakconv}}\label{section:weak}
We are now ready to prove that the
problem~\eqref{eq:strong}--\eqref{eq:con} has a weak solution.
\begin{proof}
We recall from~\eqref{eq:wc1}--\eqref{eq:wc5} that $\bm\in \Hone{D_T}$, 
$(\bH,\lambda)\in L^2(0,T;\XX)$ and $\bH\in H^1(0,T;\Ltwo{D})$.
By virtue of Lemma~\ref{lem:bil for} it suffices to prove that
$(\bm,\bH,\lambda)$ satisfies~\eqref{eq:wssymm1} 
and~\eqref{eq:bil for}.

Let $\bphi\in C^\infty(D_T)$ and $B:=(\bxi,\zeta)\in L^2(0,T;\XX)$.
On the one hand, we define the test function
$\bphi_{hk}:=\Pi_{\SS}(\bm_{hk}^-\times\bphi)$
as the usual interpolant of~$\bm_{hk}^-\times\bphi$ into
$\SS^1(\TT_h)^3$. 
By definition, $\bphi_{hk}(t,\cdot) \in\KK_{\bm_h^j}$ for all
$t\in[t_j,t_{j+1})$. 
On the other hand, it follows from Lemma~\ref{lem:den pro}
that there exists~$B_h:=(\bxi_h,\zeta_h)\in\XX_h$ converging
to~$B\in\XX$.
Equations~\eqref{eq:mhk Hhk} hold with these
test functions. The main idea of the proof is to
pass to the limit in~\eqref{eq:mhk Hhk1} and~\eqref{eq:mhk Hhk2} to
obtain~\eqref{eq:wssymm1} and~\eqref{eq:bil for}, respectively.

In order to prove that~\eqref{eq:mhk Hhk1}
implies~\eqref{eq:wssymm1} we will prove that as~$h,k\to0$
\begin{subequations}\label{eq:conv}
\begin{align}
\dual{\bv_{hk}^-}{\bphi_{hk}}_{D_T} 
&\to
\dual{\bm_t}{ \bm\times\bphi}_{D_T},
\label{eq:conv1}
\\
\dual{\bm_{hk}^-\times\bv_{hk}^-}{\bphi_{hk}}_{D_T}
&\to
\dual{\bm\times\bm_t}{\bm\times\bphi}_{D_T},
\label{eq:conv2}
\\
k\dual{\nabla\bv_{hk}^-}{\nabla\bphi_{hk}}_{D_T}
&\to0,
\label{eq:conv3}
\\
\dual{\nabla\bm_{hk}^-}{\nabla\bphi_{hk}}_{D_T} 
&\to
\dual{\nabla\bm}{\nabla(\bm\times\bphi)}_{D_T},
\label{eq:conv4}
\\
\dual{\bH_{hk}^-}{\bphi_{hk}}_{D_T} 
&\to
\dual{\bH}{ \bm\times\bphi}_{D_T}.
\label{eq:conv5}
\end{align}
\end{subequations}

Firstly, it can be easily shown that (see~\cite{alouges})
\begin{equation}\label{eq:phi mhk}
\norm{\bphi_{hk}-\bm_{hk}^-\times\bphi}{L^2(0,T;\Hone{D})}
\lesssim
h
\norm{\bm_{hk}^-}{L^2(0,T;\Hone{D})}
\norm{\bphi}{{\mathbb W}^{2,\infty}(D_T)}
\lesssim
h
\norm{\bphi}{{\mathbb W}^{2,\infty}(D_T)}
\end{equation}
and
\begin{align}\label{eq:phiinfty}
 \norm{\bphi_{hk}-\bm_{hk}^-\times\bphi}{L^\infty(0,T;\Hone{D})}
\lesssim
h
\norm{\bm_{hk}^-}{L^\infty(0,T;\Hone{D})}
\norm{\bphi}{{\mathbb W}^{2,\infty}(D_T)}
\lesssim
h
\norm{\bphi}{{\mathbb W}^{2,\infty}(D_T)},
\end{align}
where we used~\eqref{eq:mhk pm}.
In particular, we have
\begin{equation}\label{eq:phi hk inf}
\norm{\bphi_{hk}}{L^\infty(0,T;\Hone{D})}
\lesssim
1.
\end{equation}
We now prove~\eqref{eq:conv1} and~\eqref{eq:conv5}.
With~\eqref{eq:phi mhk}, there holds for $h,k\to0$,
\begin{align}\label{eq:phihk mhk}
\norm{\bphi_{hk}-\bm\times\bphi}{\Ltwo{D_T}}
&\le
\norm{\bphi_{hk}-\bm_{hk}^-\times\bphi}{\Ltwo{D_T}}
+
\norm{(\bm_{hk}^--\bm)\times\bphi}{\Ltwo{D_T}}
\nonumber
\\
&\lesssim
\big(h + \norm{\bm_{hk}^--\bm}{\Ltwo{D_T}}\big)
\norm{\bphi}{{\mathbb W}^{2,\infty}(D_T)}
\to0 
\end{align}
due to~\eqref{eq:wc2}. Consequently, with the
help of~\eqref{eq:wc31} and~\eqref{eq:wc5} we
obtain~\eqref{eq:conv1} and~\eqref{eq:conv5}.

In order to prove~\eqref{eq:conv2} we note that
the elementary identity
\begin{equation}\label{eq:abc}
\ba\cdot(\bb\times\bc)= \bb\cdot(\bc\times\ba)=
\bc\cdot(\ba\times\bb)
\quad\forall\ba,\bb,\bc\in\R^3
\end{equation}
yields
\begin{align}\label{eq:idid}
\dual{\bm_{hk}^-\times\bv_{hk}^-}{\bphi_{hk}}_{D_T}
=
\dual{\bv_{hk}^-}{\bphi_{hk}\times\bm_{hk}^-}_{D_T}.
\end{align}
It follows successively from the triangle inequality,
~\eqref{eq:l2} and~\eqref{eq:phi hk inf} that
\begin{align*}
 \norm{\bphi_{hk}&\times\bm_{hk}^{-}
-
(\bm\times\bphi)\times\bm}{\Ltwo{D_T}}\\
 &\leq
 \norm{\bphi_{hk}\times(\bm_{hk}^--\bm)}{\Ltwo{D_T}}+
 \norm{(\bphi_{hk}-(\bm\times\bphi))\times\bm}{\Ltwo{D_T}}\\
 &\lesssim
\Big(\int_0^T\norm{\bphi_{hk}(t)}{\Hone{D}}^2\norm{\bm_{hk}^-(t)-\bm(t)}{\H^{1/2}(D)}^2\,dt\Big)^{1/2}+
\norm{\bphi_{hk}-(\bm\times\bphi)}{\Ltwo{D_T}}\\
&\leq 
\norm{\bphi_{hk}}{L^\infty(0,T;\Hone{D})}
\norm{\bm_{hk}^--\bm}{L^2(0,T;\H^{1/2}(D))}
+
\norm{\bphi_{hk}-(\bm\times\bphi)}{\Ltwo{D_T}} \\
&\lesssim
\norm{\bm_{hk}^--\bm}{L^2(0,T;\H^{1/2}(D))}
+
\norm{\bphi_{hk}-(\bm\times\bphi)}{\Ltwo{D_T}}.
\end{align*}
Thus~\eqref{eq:mhk h12} and~\eqref{eq:phihk mhk} imply
$\bphi_{hk}\times\bm_{hk}^-\to
(\bm\times\bphi)\times\bm$ in $\Ltwo{D_T}$.
This together with~\eqref{eq:wc5} and~\eqref{eq:idid} implies
\[
\dual{\bm_{hk}^-\times\bv_{hk}^-}{\bphi_{hk}}_{D_T}
\to
\dual{\bm_t}{(\bm\times\bphi)\times\bm}_{D_T},
\]
which is indeed~\eqref{eq:conv2} by invoking~\eqref{eq:abc}.

Statement~\eqref{eq:conv4} follows from~\eqref{eq:phi mhk}, 
\eqref{eq:wc1a}, and~\eqref{eq:wc2} as follows:
As $h,k\to 0$,
\begin{align*}
\dual{\nabla\bm_{hk}^-}{\nabla\bphi_{hk}}_{D_T} 
&=
\dual{\nabla\bm_{hk}^-}{\nabla(\bphi_{hk}-\bm_{hk}^-\times\bphi)}_{D_T} 
+
\dual{\nabla\bm_{hk}^-}{\nabla(\bm_{hk}^-\times\bphi)}_{D_T} 
\\
&=
\dual{\nabla\bm_{hk}^-}{\nabla(\bphi_{hk}-\bm_{hk}^-\times\bphi)}_{D_T} 
+
\dual{\nabla\bm_{hk}^-}{\bm_{hk}^-\times\nabla\bphi}_{D_T} 
\\
&\longrightarrow
\dual{\nabla\bm}{0}_{D_T} +
\dual{\nabla\bm}{\bm\times\nabla\bphi}_{D_T} 
=
\dual{\nabla\bm}{\nabla(\bm\times\bphi)}_{D_T} .
\end{align*}

Finally, in order to prove~\eqref{eq:conv3} we first note that 
\eqref{eq:phi mhk} and the boundedness 
of the sequence~$\{\norm{\bm_{hk}^-}{L^2(0,T;\Hone{D})}\}$,
see~\eqref{eq:mhk pm},
give the boundedness
of~$\{\norm{\bphi_{hk}}{L^2(0,T;\Hone{D}}\}$, and thus
of~$\{\norm{\nabla\bphi_{hk}}{\Ltwo{D_T}}\}$. 
On the other hand, 
\begin{equation}\label{eq:vhk}
\norm{\nabla\bv_{hk}^-}{\Ltwo{D_T}}^2
=
k
\sum_{i=0}^{N-1}
\norm{\nabla\bv_{h}^i}{\Ltwo{D}}^2.
\end{equation}
If $1/2<\theta\le1$ then~\eqref{eq:denergy} and~\eqref{eq:vhk}
yield the boundedness of $\{\norm{\nabla\bv_{hk}^-}{\Ltwo{D_T}}\}$.
Hence
\[
k\dual{\nabla\bv_{hk}^-}{\nabla\bphi_{hk}}_{D_T}
\to0
\quad\text{as }h,k\to 0.
\]
If $0\le\theta\le1/2$ then the inverse estimate,~\eqref{eq:vhk},
and~\eqref{eq:denergy} yield
\[
\norm{\nabla\bv_{hk}^-}{\Ltwo{D_T}}^2
\lesssim
kh^{-2}
\sum_{i=0}^{N-1}
\norm{\bv_{h}^i}{\Ltwo{D}}^2
\lesssim h^{-2},
\]
so that
$
\left|
k\dual{\nabla\bv_{hk}^-}{\nabla\bphi_{hk}}_{D_T}
\right|
\lesssim
kh^{-1}.
$
This goes to 0 under the assumption~~\eqref{eq:hk 12}.
Altogether, we obtain~\eqref{eq:wssymm1} when passing to the limit
in~\eqref{eq:mhk Hhk1}.

Next, recalling that~$B_h\to B$ in~$\XX$
we prove that~\eqref{eq:mhk Hhk2}
implies~\eqref{eq:bil for} by proving
\begin{subequations}\label{eq:convb}
\begin{align}
\dual{\partial_t\bH_{hk}}{\bxi_h}_{D_T}
&\to
\dual{\bH_t}{\bxi}_{D_T},
\label{eq:convb1} 
\\
\dual{\dtn_h\partial_t\lambda_{hk}}{\zeta_h}_{\Gamma_T}
&\to
\dual{\dtn\lambda_t}{\zeta}_{\Gamma_T}, \label{eq:spec}
\\
\dual{\nabla\times\bH_{hk}^+}{\nabla\times\bxi_h}_{D_T}
&\to
\dual{\nabla\times\bH}{\nabla\times\bxi}_{D_T},
\\
\dual{\bv_{hk}^-}{\bxi_h}_{D_T}
&\to
\dual{\bv}{\bxi}_{D_T}.
\end{align}
\end{subequations}
The proof is similar to that of~\eqref{eq:conv} (where we
use Lemma~\ref{lem:wea con}
for the proof of~\eqref{eq:spec}) and is
therefore omitted. This proves~(3) and~(5) of
Definition~\ref{def:fembemllg}.

Finally, we obtain $\bm(0,\cdot)=\bm^0$, $\bH(0,\cdot)=\bH^0$, and
$\lambda(0,\cdot)=\lambda^0$ from the weak convergence and the
continuity of the trace operator.
This and $|\bm|=1$ yield Statements~(1)--(2) of
Definition~\ref{def:fembemllg}. To obtain~(4), note that
$\nabla_\Gamma\colon H^{1/2}(\Gamma)\to \H_\perp^{-1/2}(\Gamma)$ and
$\bn\times(\bn\times(\cdot))\colon \Hcurl{D}\to  \H_\perp^{-1/2}(\Gamma)$ are bounded
linear operators; see~\cite[Section~4.2]{buffa2} for exact definition of the spaces and the result. 
Weak convergence then proves~(4) of
Definition~\ref{def:fembemllg}.
Estimate~\eqref{eq:energybound2} follows by weak
lower-semicontinuity and the energy bound~\eqref{eq:denergy}.
This completes the proof of the theorem.
\end{proof}

\section{Numerical experiment}\label{section:numerics}
The following numerical experiment is carried out by use of the
FEM toolbox FEniCS~\cite{fenics} (\texttt{fenicsproject.org})
and the BEM toolbox BEM++~\cite{bempp} (\texttt{bempp.org}).
We use GMRES to solve the linear systems and blockwise diagonal
scaling as preconditioners.

The values of the constants in this example are taken from the
standard problem \#1 proposed by the Micromagnetic Modelling Activity
Group at the National Institute of Standards and
Technology~\cite{mumag}.
As domain serves the unit cube $D=[0,1]^3$ with initial conditions
\begin{align*}
 \bm^0(x_1,x_2,x_3):=\begin{cases} (0,0,-1)&\text{for } d(x)\geq 1/4,\\
                      (2Ax_1,2Ax_2,A^2-d(x))/(A^2+d(x))&\text{for }d(x)<1/4,
                     \end{cases}
\end{align*}
where $d(x):= |x_1-0.5|^2+|x_2-0.5|^2$ and $A:=(1-2\sqrt{d(x)})^4/4$ and
\begin{align*}
 \bH^0= \begin{cases} (0,0,2)&\text{in } D,\\
         (0,0,2)-\bm^0&\text{in } D^\ast.
        \end{cases}
\end{align*}
We choose the constants
\begin{align*}
 \alpha=0.5,\quad \sigma=\begin{cases}1&\text{in }D,\\ 0& \text{in }D^\ast,\end{cases}\quad \mu_0=1.25667\times 10^{-6},\quad C_e=\frac{2.6\times 10^{-11}}{\mu_0 \,6.4\times 10^{11}}.
\end{align*}
For time and space discretisation of $D_T:= [0,5]\times D$, we apply a
uniform partition in space ($h=0.1$) and time ($k=0.002$).
Figure~\ref{fig:en} plots the corresponding energies over time.
Figure~\ref{fig:m} shows a series of magnetizations $\bm(t_i)$ at certain times $t_i\in[0,5]$. 
Figure~\ref{fig:h} shows that same for the magnetic field $\bH(t_i)$.
\begin{figure}
\psfrag{energy}{\tiny energy}
\psfrag{time}{\tiny time $t$}
\psfrag{menergy}{\tiny magnetization energy}
\psfrag{henergy}{\tiny magnetic energy}
\psfrag{sum}{\tiny total energy}
\includegraphics[width=0.6\textwidth]{pics/energy2.eps}
\caption{The magnetization engergy
$\norm{\nabla\bm_{hk}(t)}{\Ltwo{D}}$ and the energy of the magnetic
field $\norm{\bH_{hk}(t)}{\Hcurl{D}}$ plotted over the time.}
\label{fig:en}
\end{figure}

\begin{figure}
\includegraphics[width=0.24\textwidth]{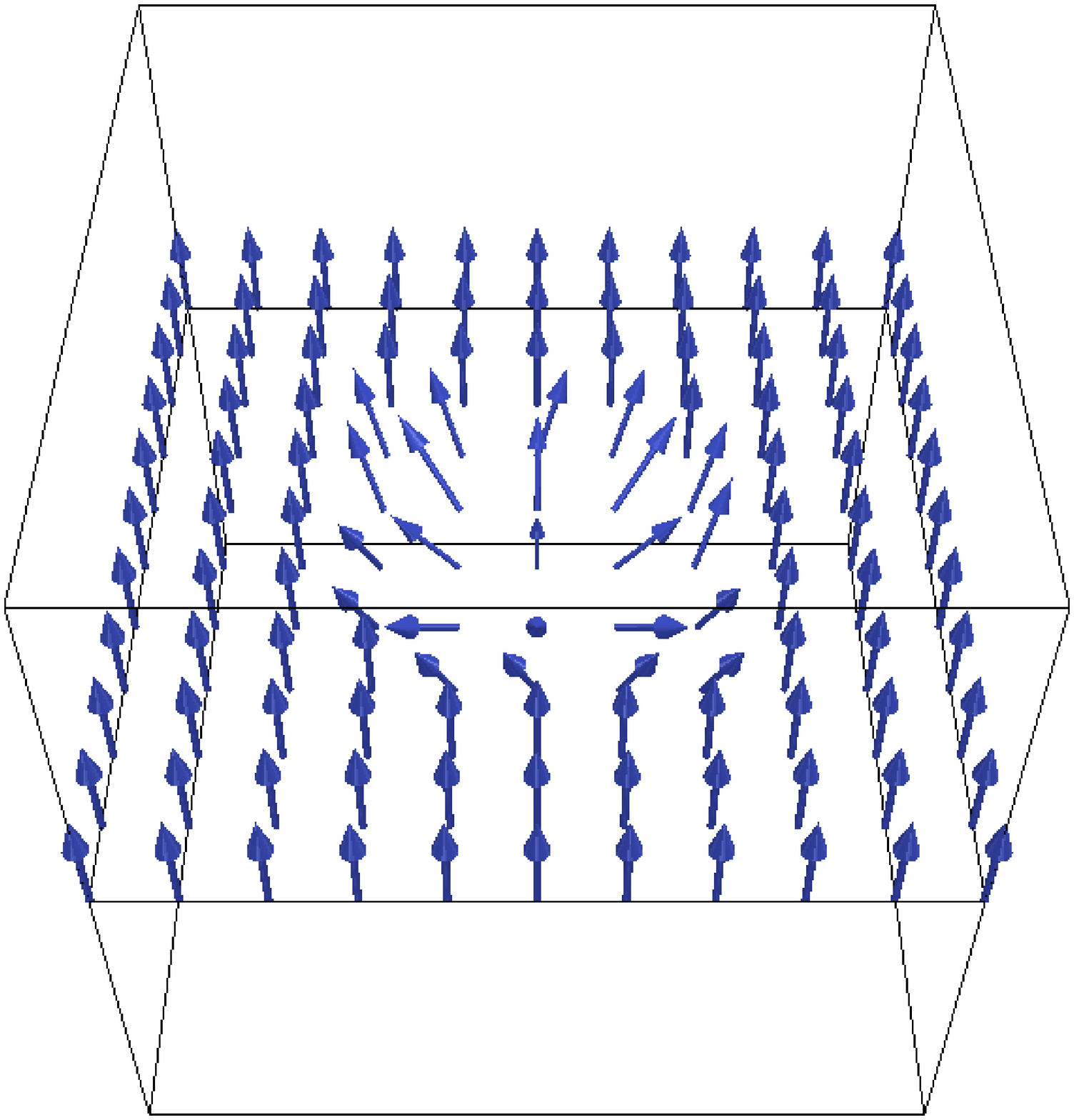}
\includegraphics[width=0.24\textwidth]{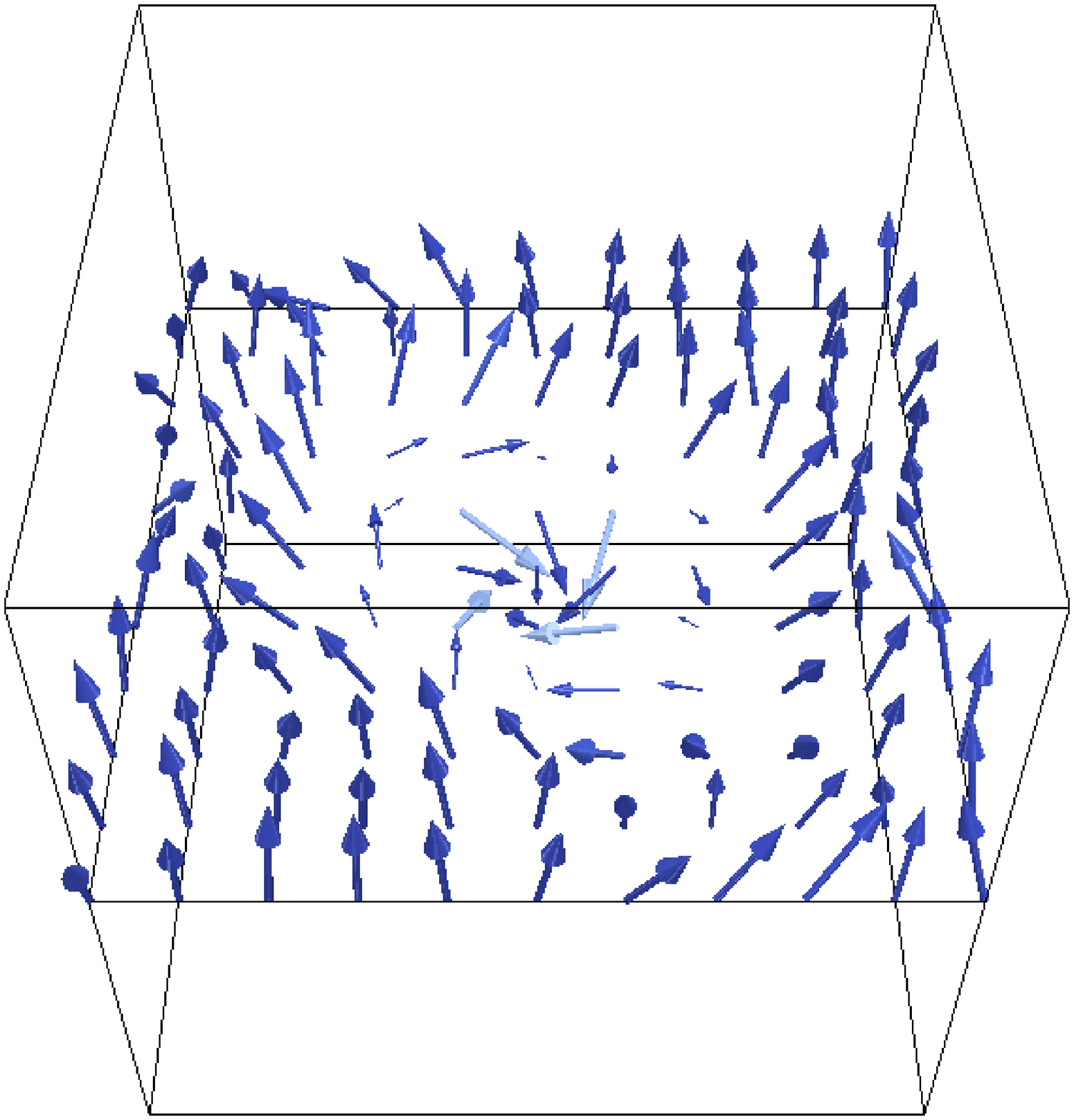}
\includegraphics[width=0.24\textwidth]{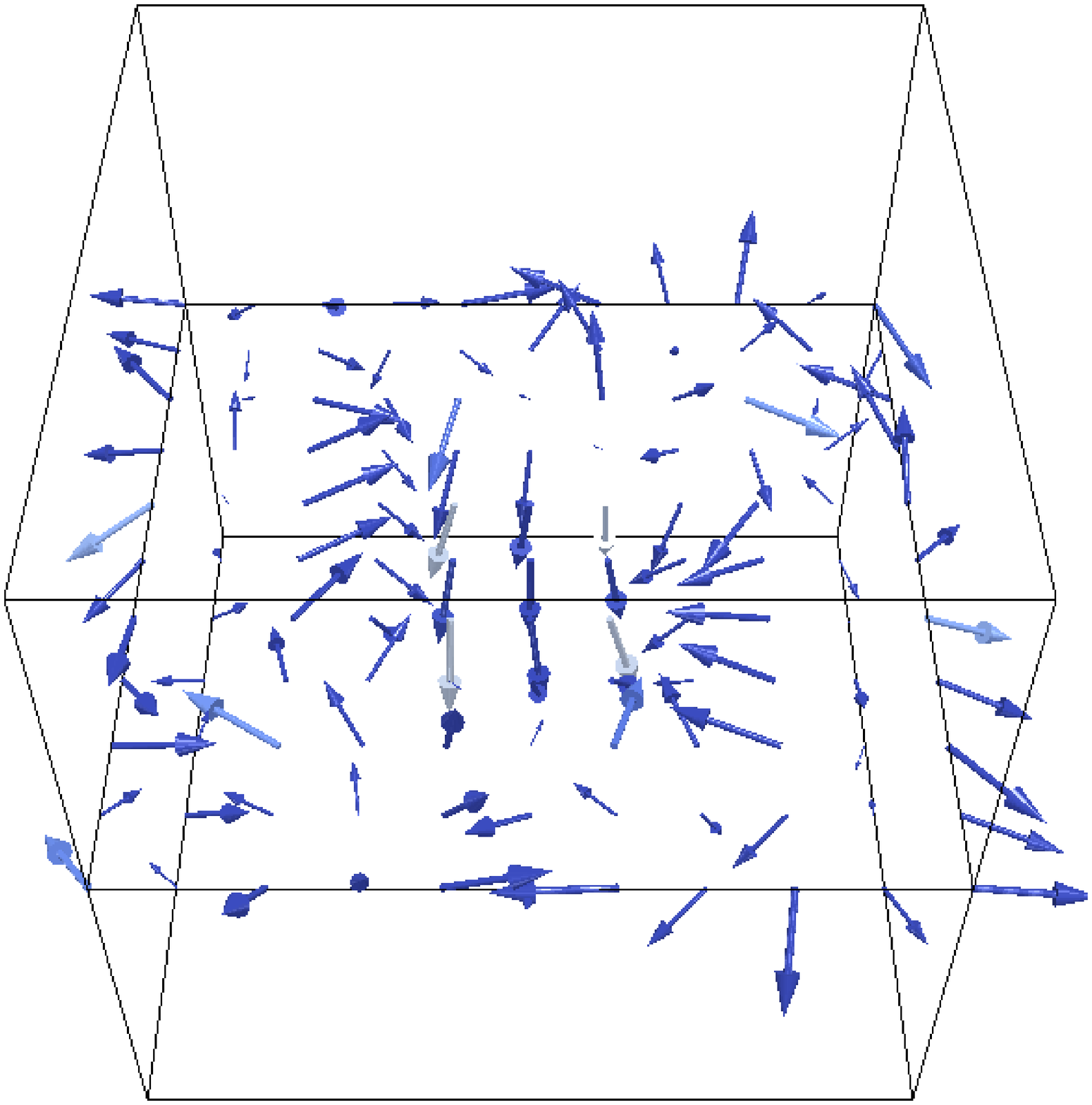}
\includegraphics[width=0.24\textwidth]{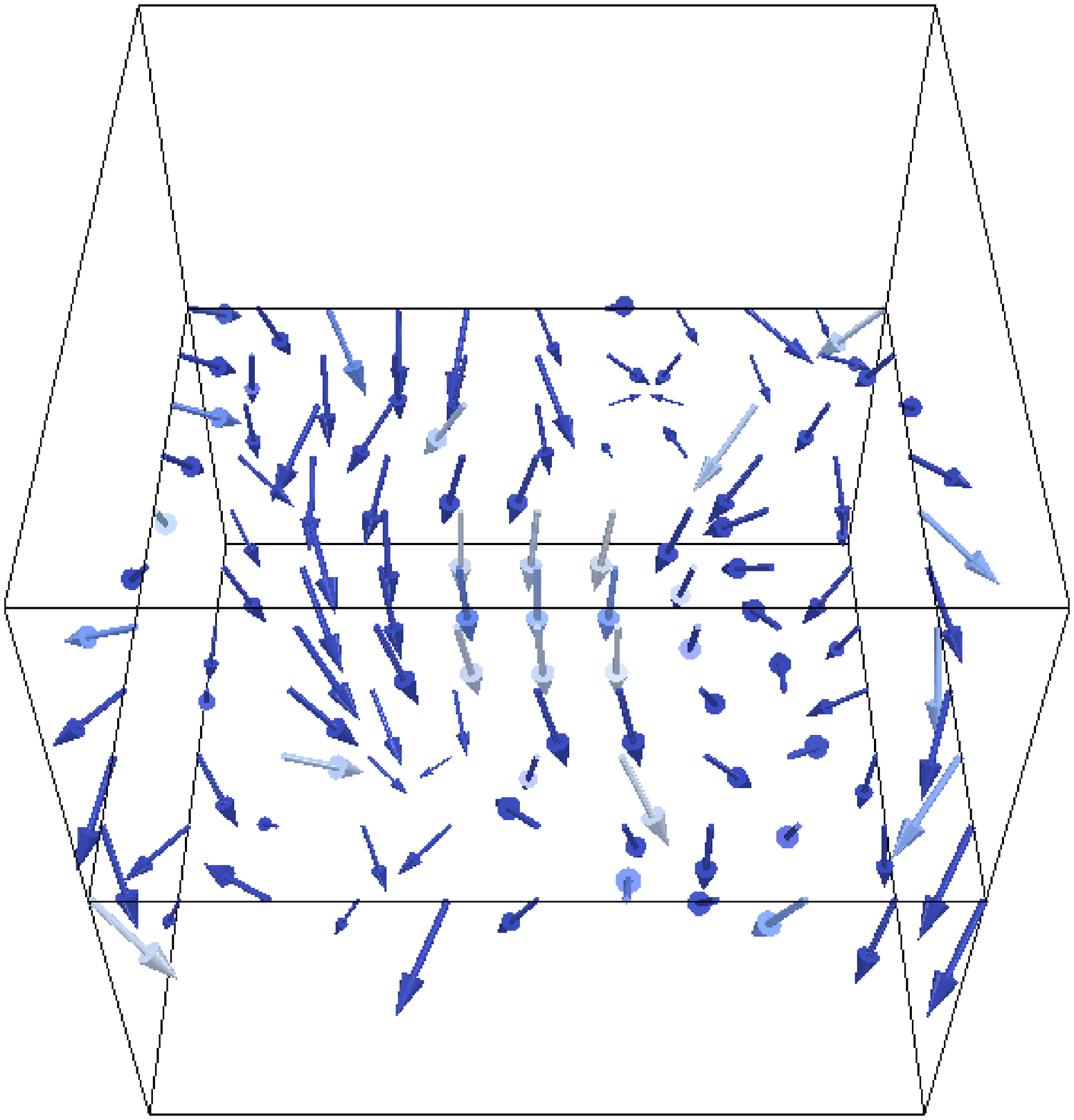}
\includegraphics[width=0.24\textwidth]{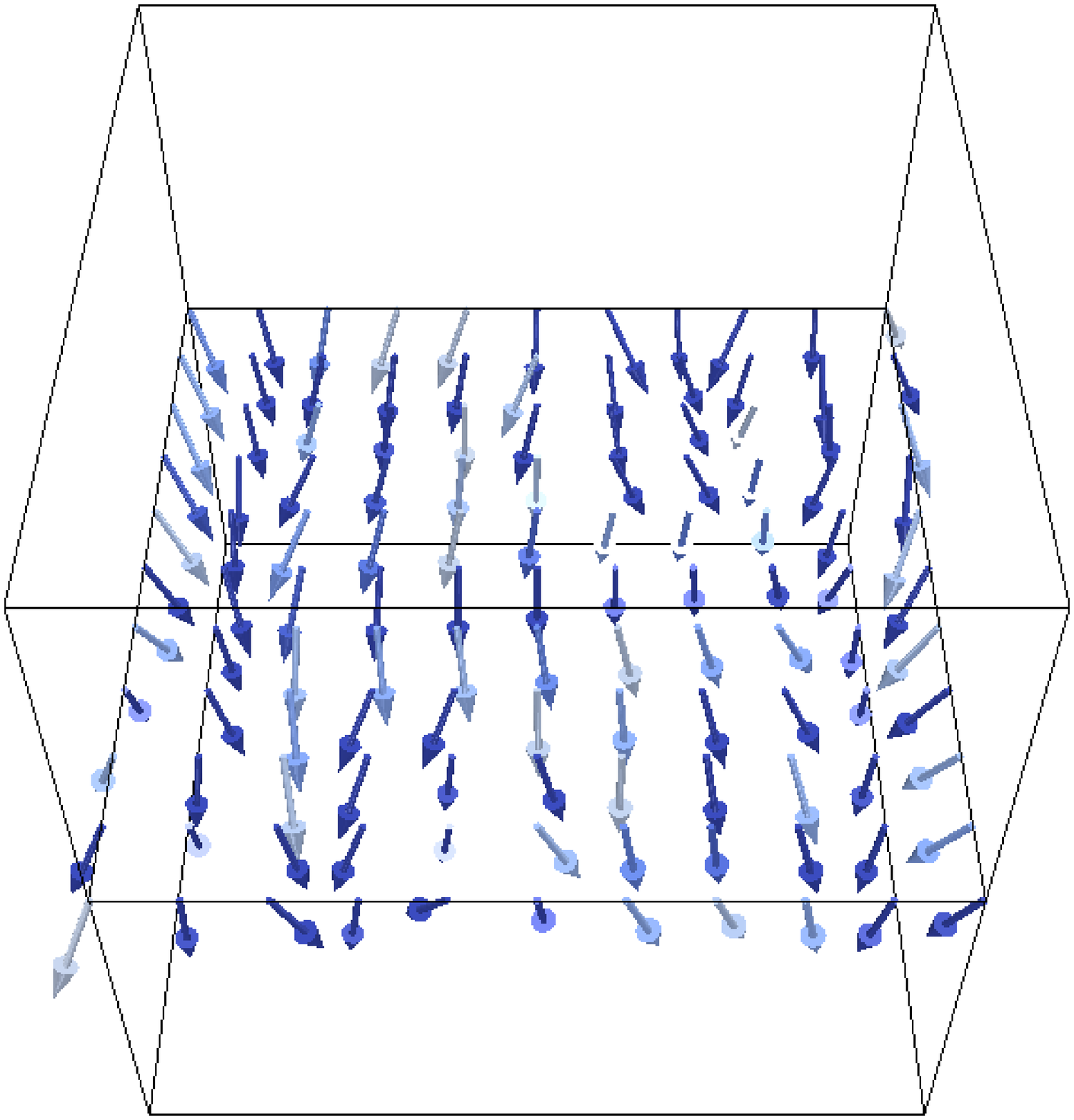}
\includegraphics[width=0.24\textwidth]{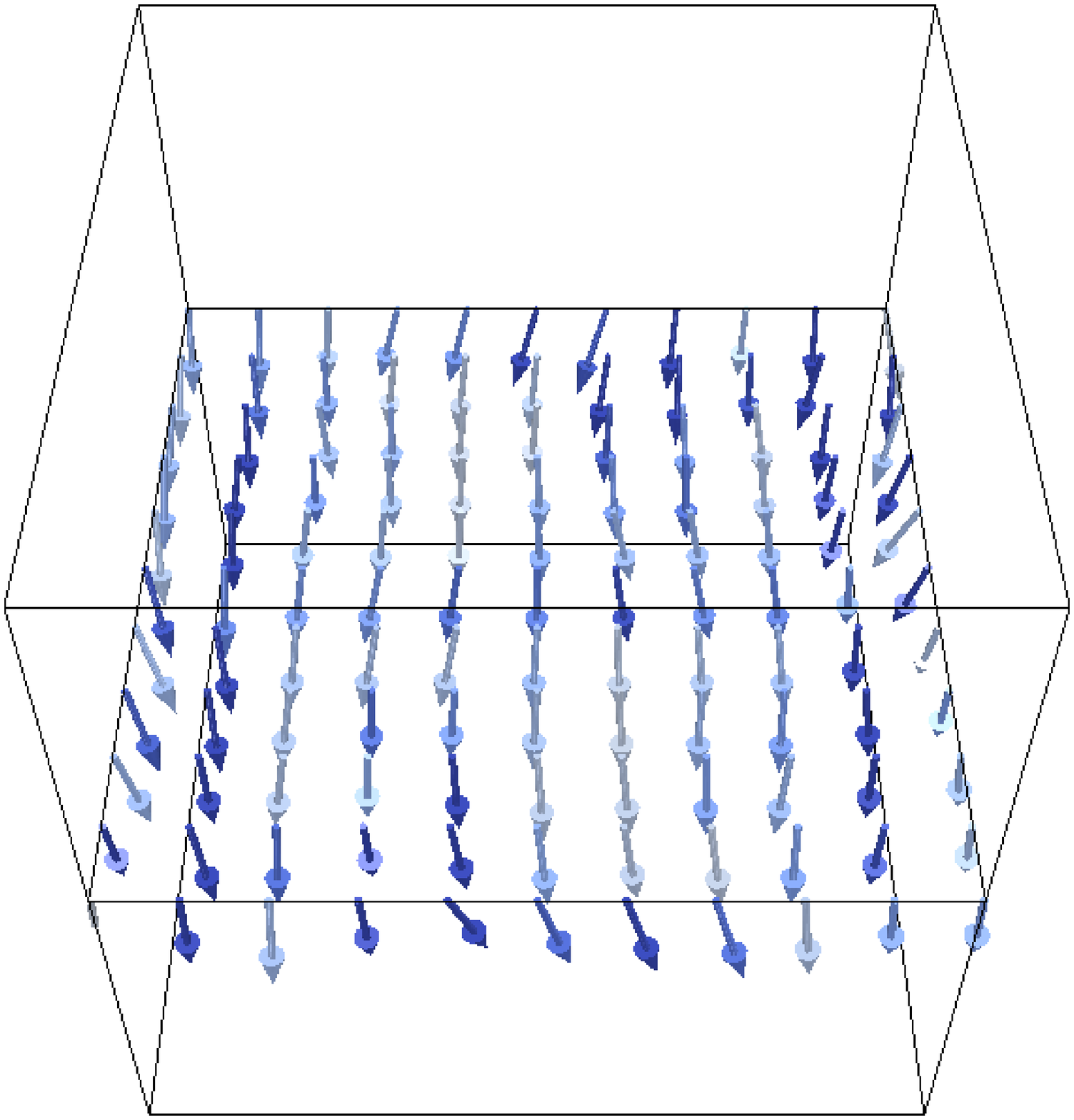}
\includegraphics[width=0.24\textwidth]{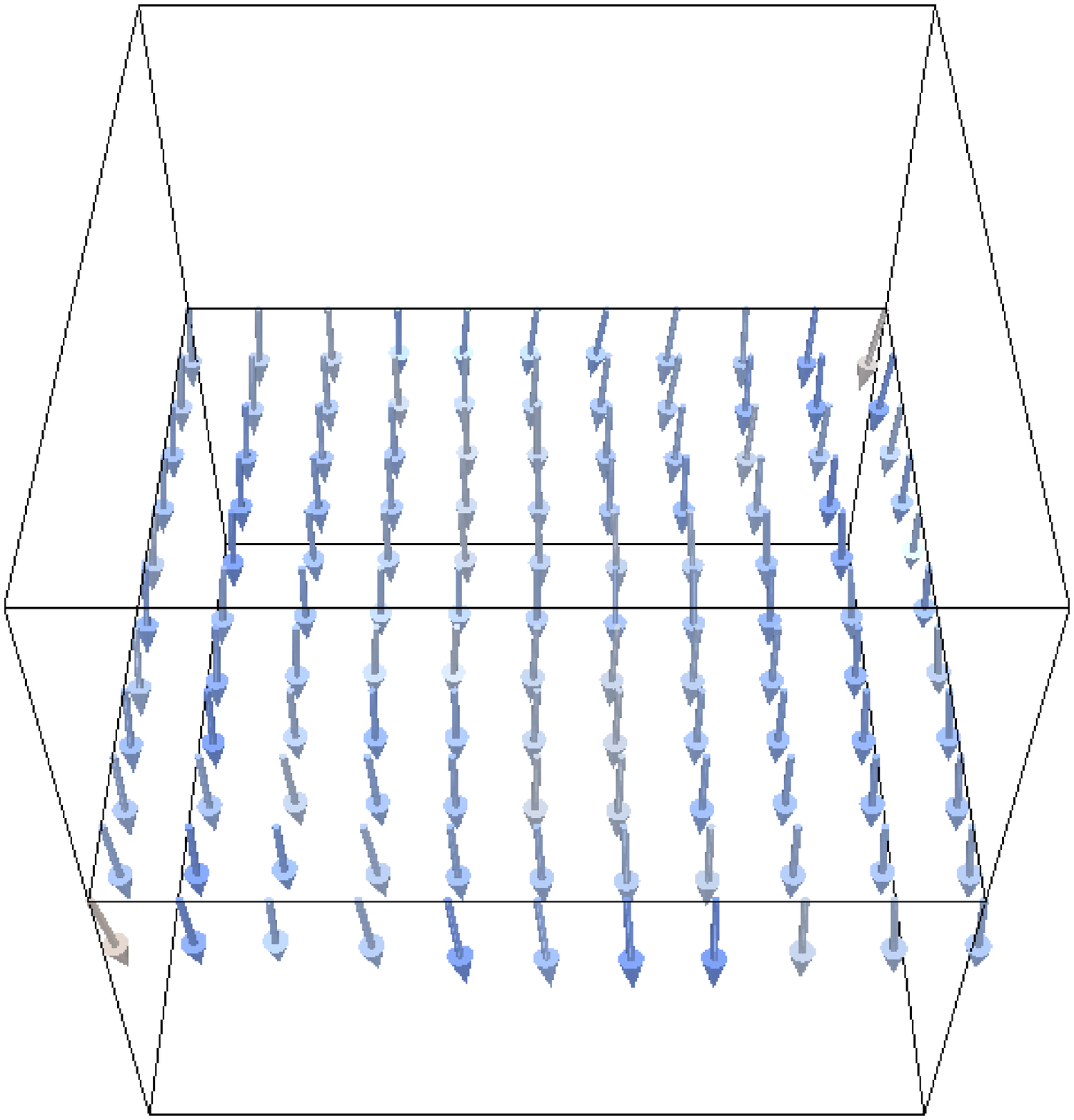}
\includegraphics[width=0.24\textwidth]{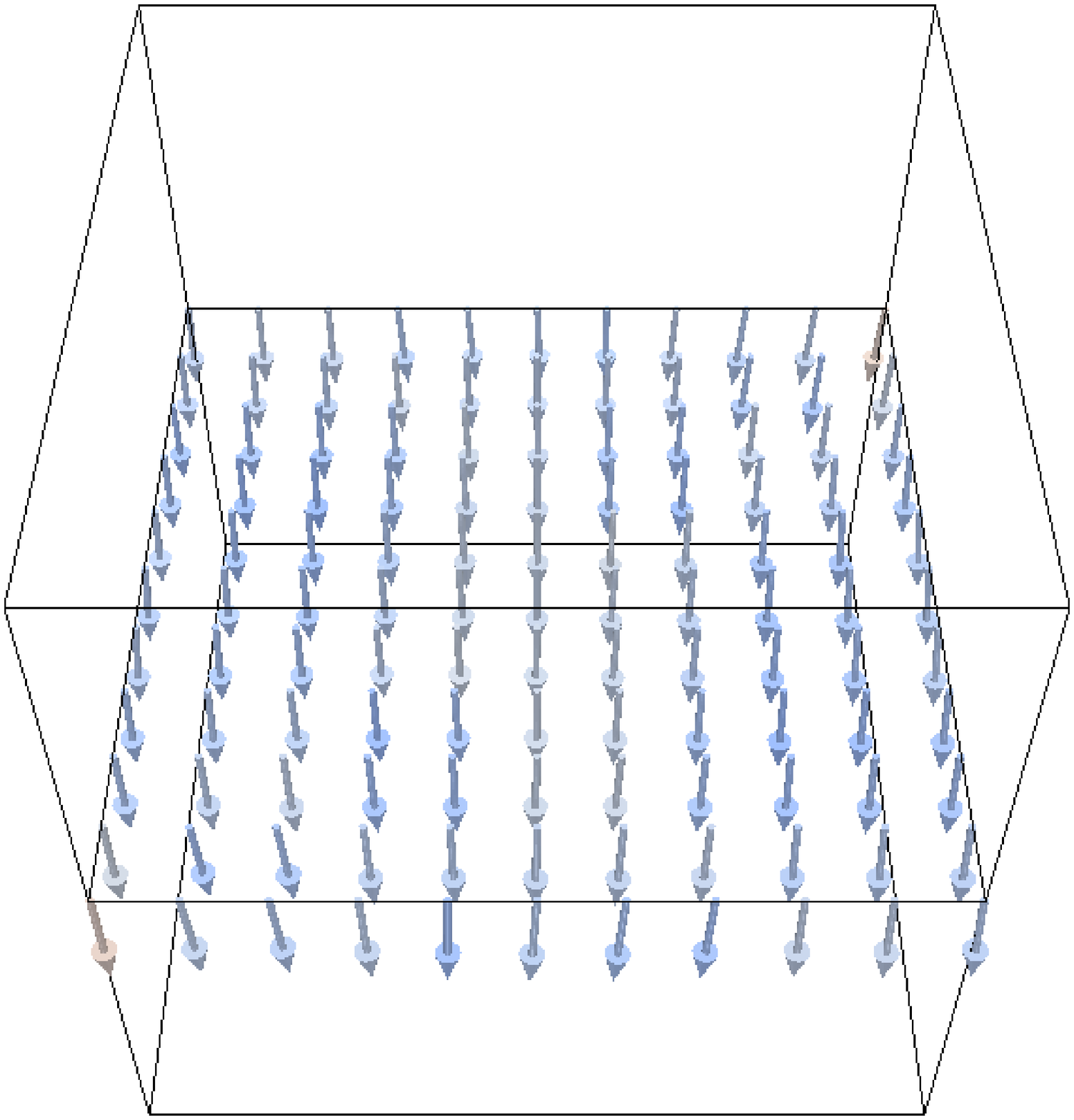}
\includegraphics[width=0.24\textwidth]{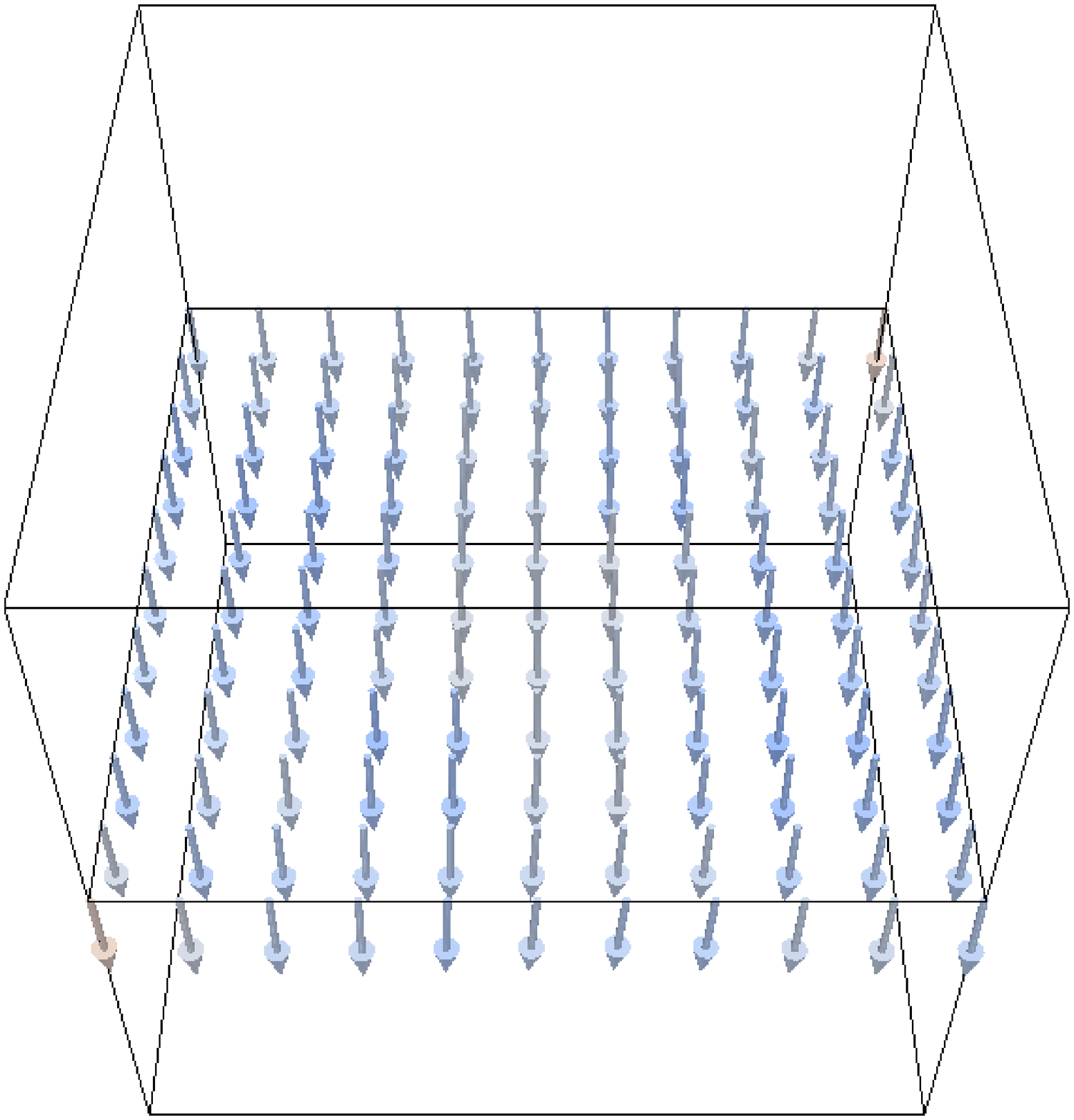}
\includegraphics[width=0.24\textwidth]{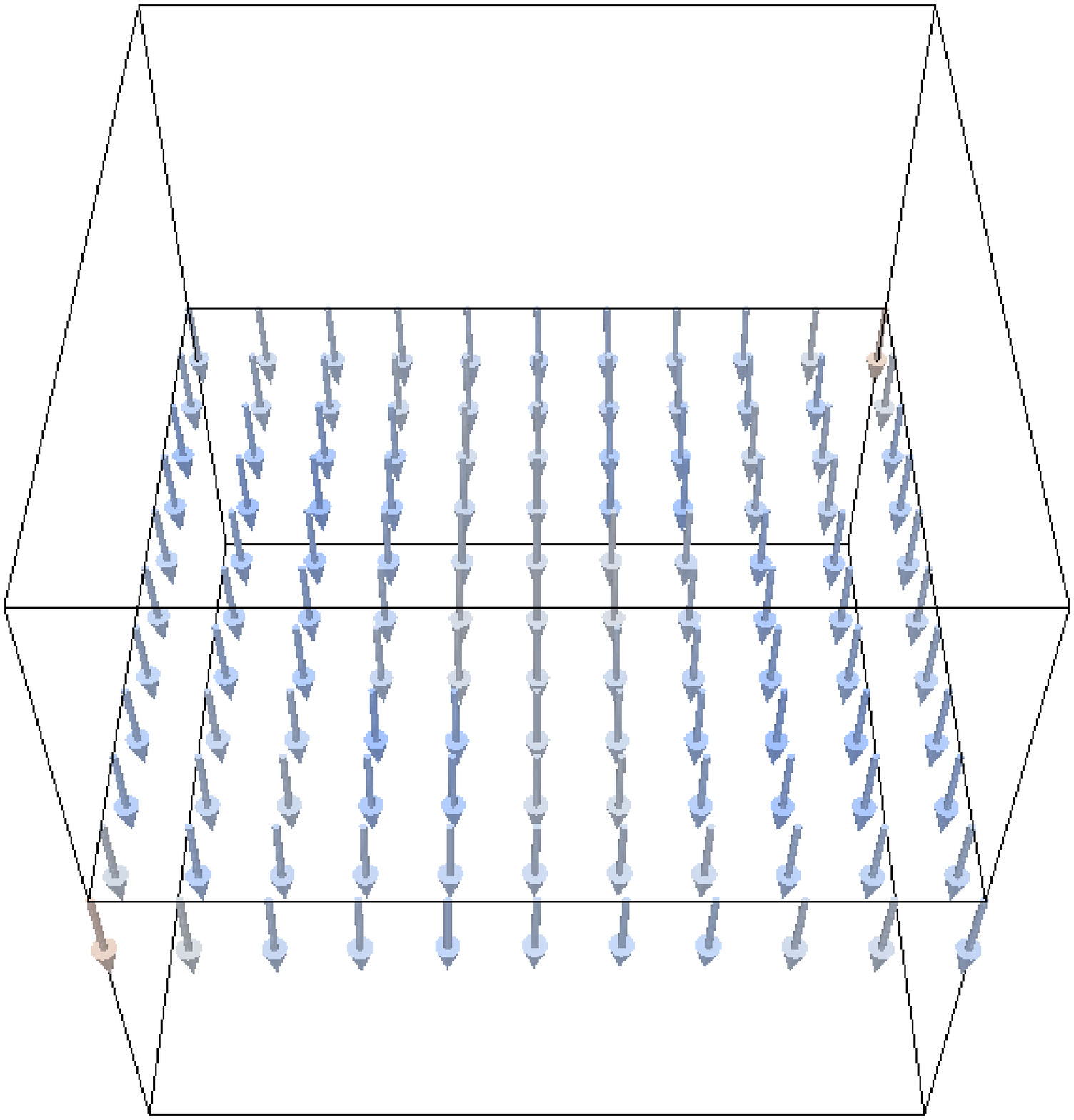}
\includegraphics[width=0.24\textwidth]{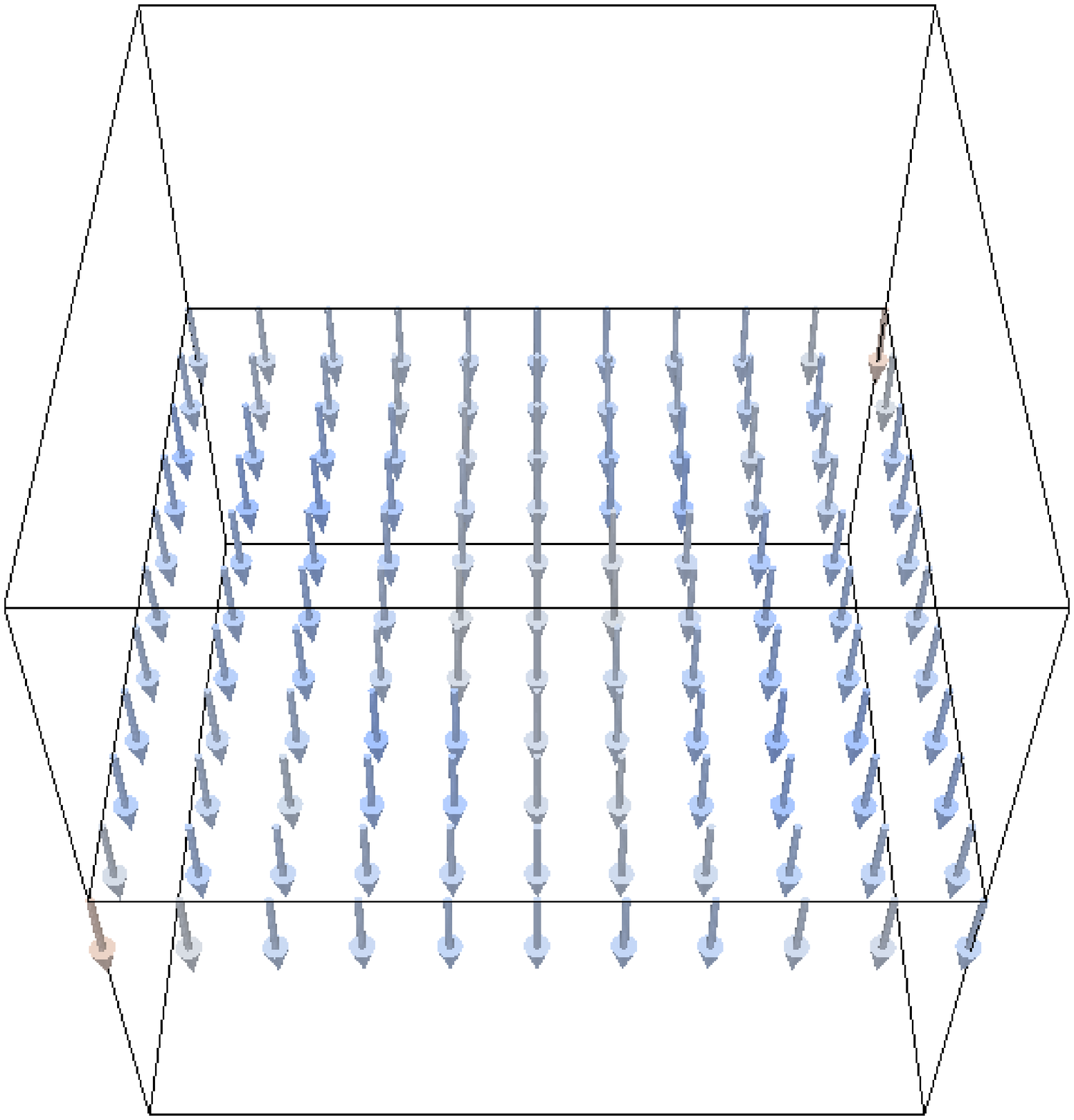}
\includegraphics[width=0.24\textwidth]{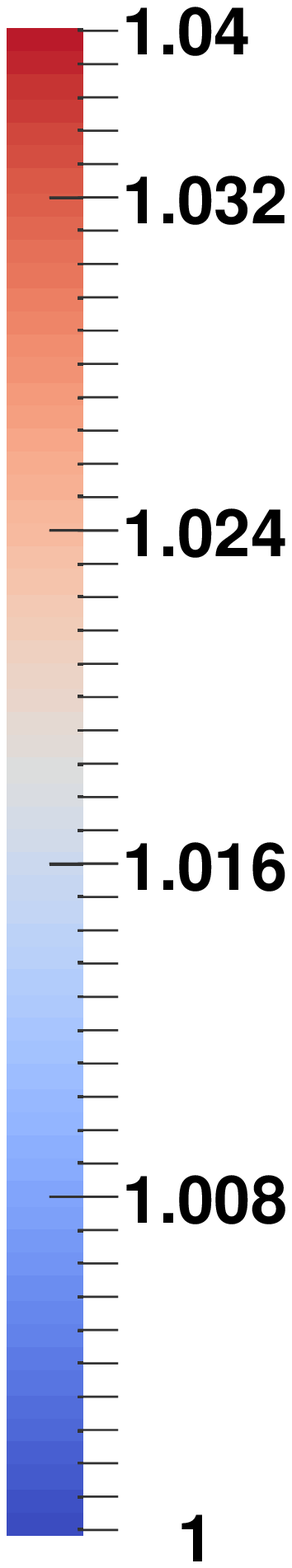}
\caption{Slice of the magnetization $\bm_{hk}(t_i)$ at $[0,1]^2\times \{1/2\}$ for $i=0,\ldots,10$ with $t_i=0.2i$. The color of the vectors represents the magnitude $|\bm_{hk}|$.
We observe that the magnetization aligns itself with the initial magnetic field $\bH^0$ by performing a damped precession.}
\label{fig:m}
\end{figure}

\begin{figure}
\includegraphics[width=0.24\textwidth]{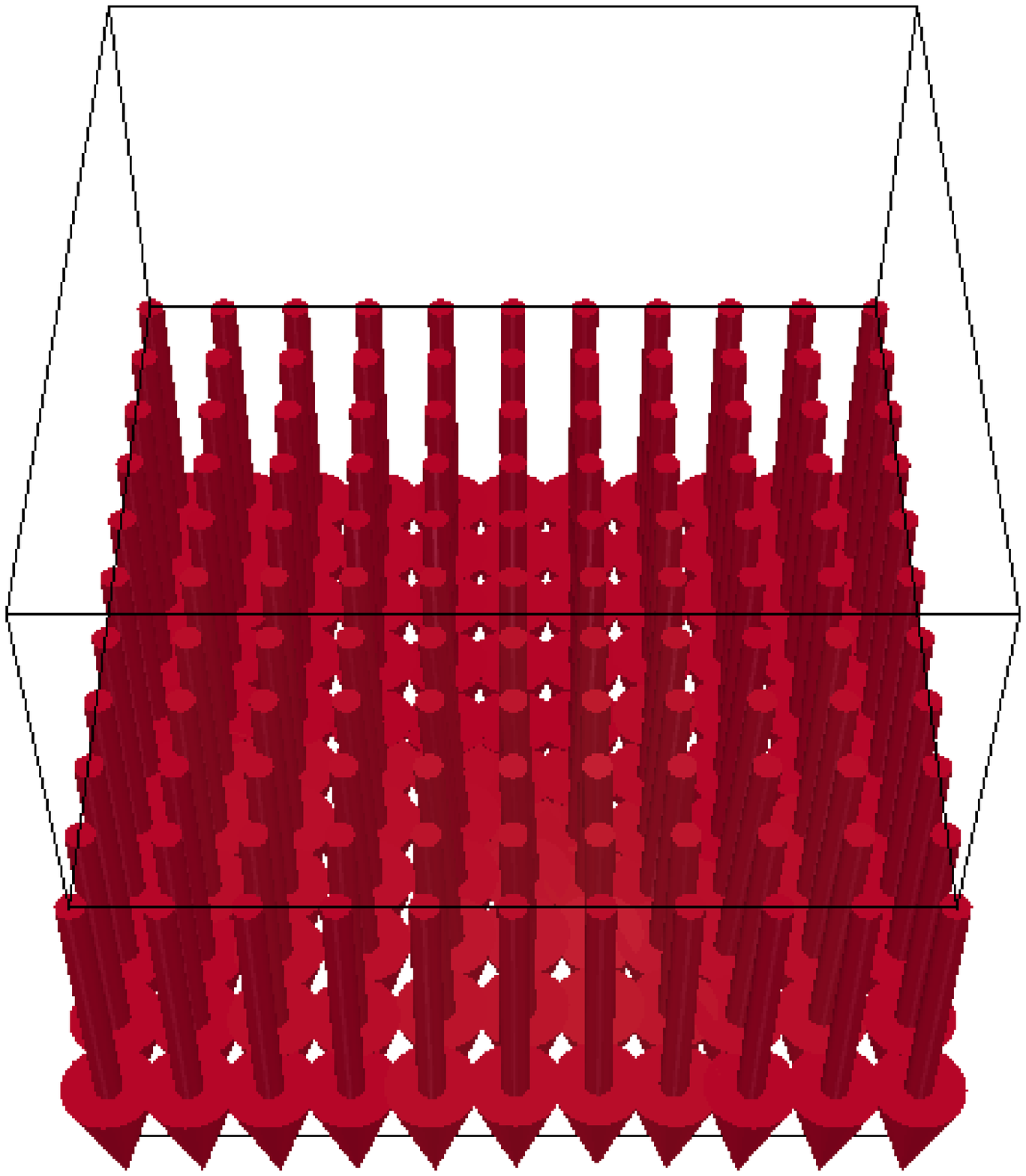}
\includegraphics[width=0.24\textwidth]{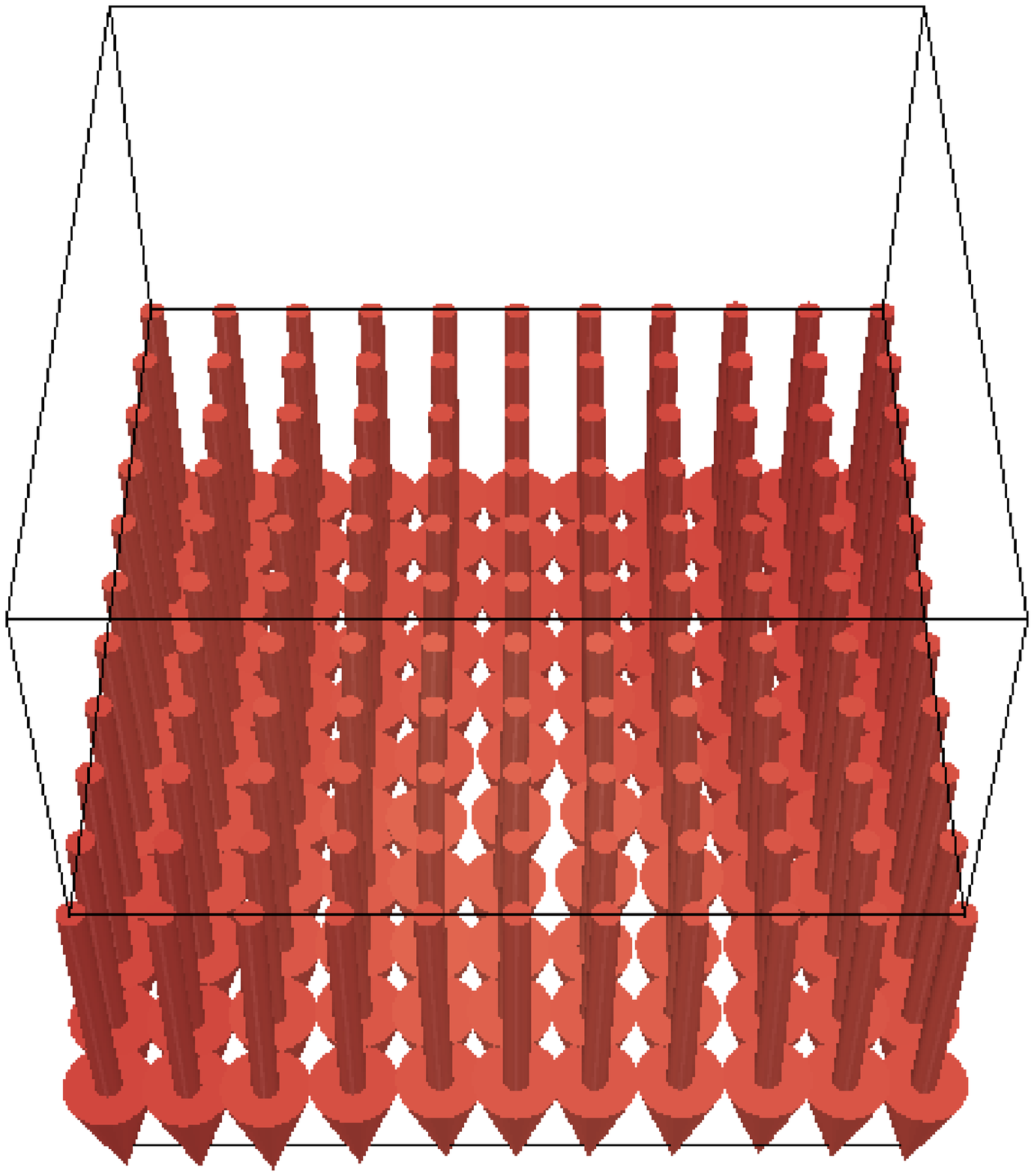}
\includegraphics[width=0.24\textwidth]{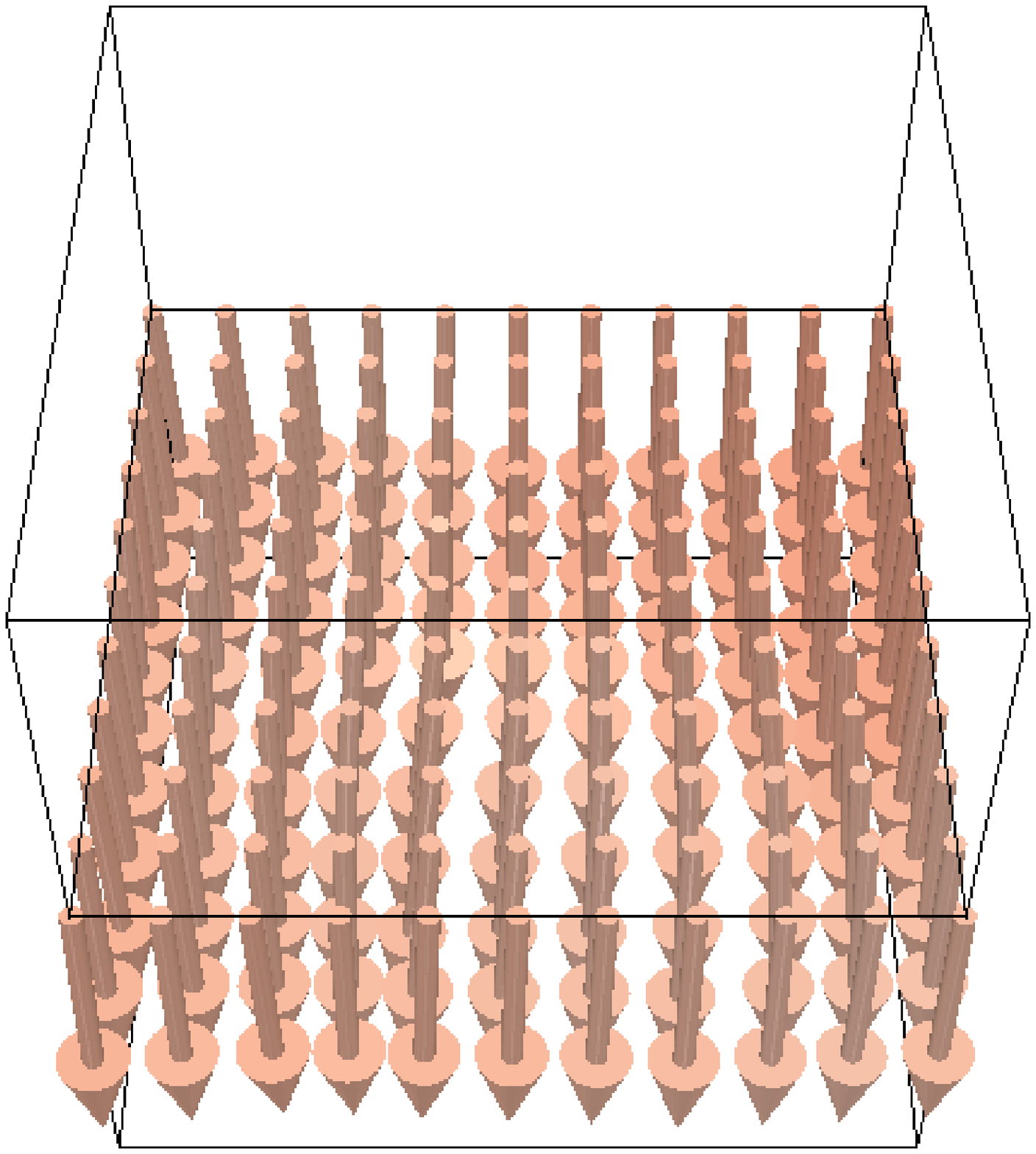}
\includegraphics[width=0.24\textwidth]{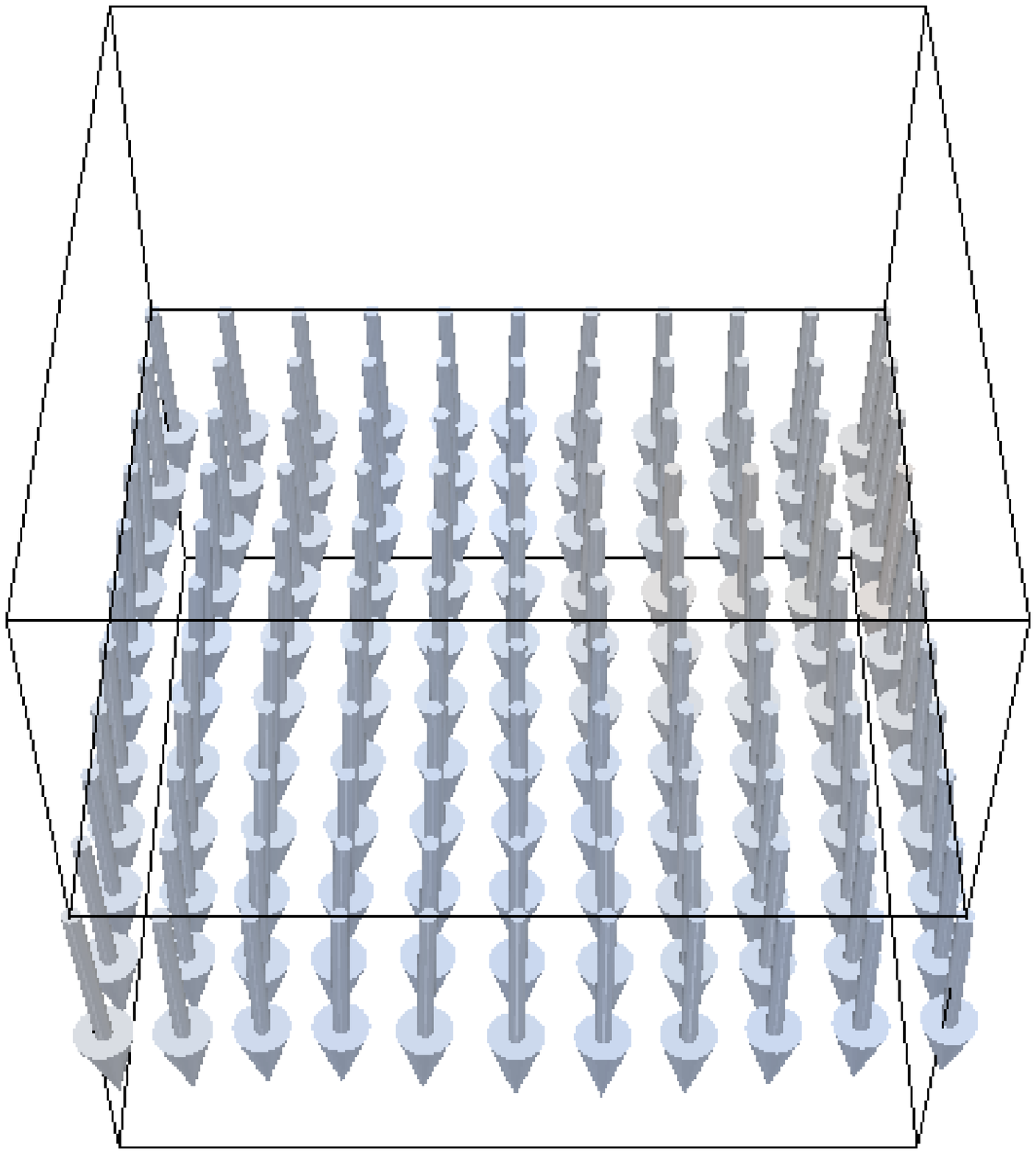}
\includegraphics[width=0.24\textwidth]{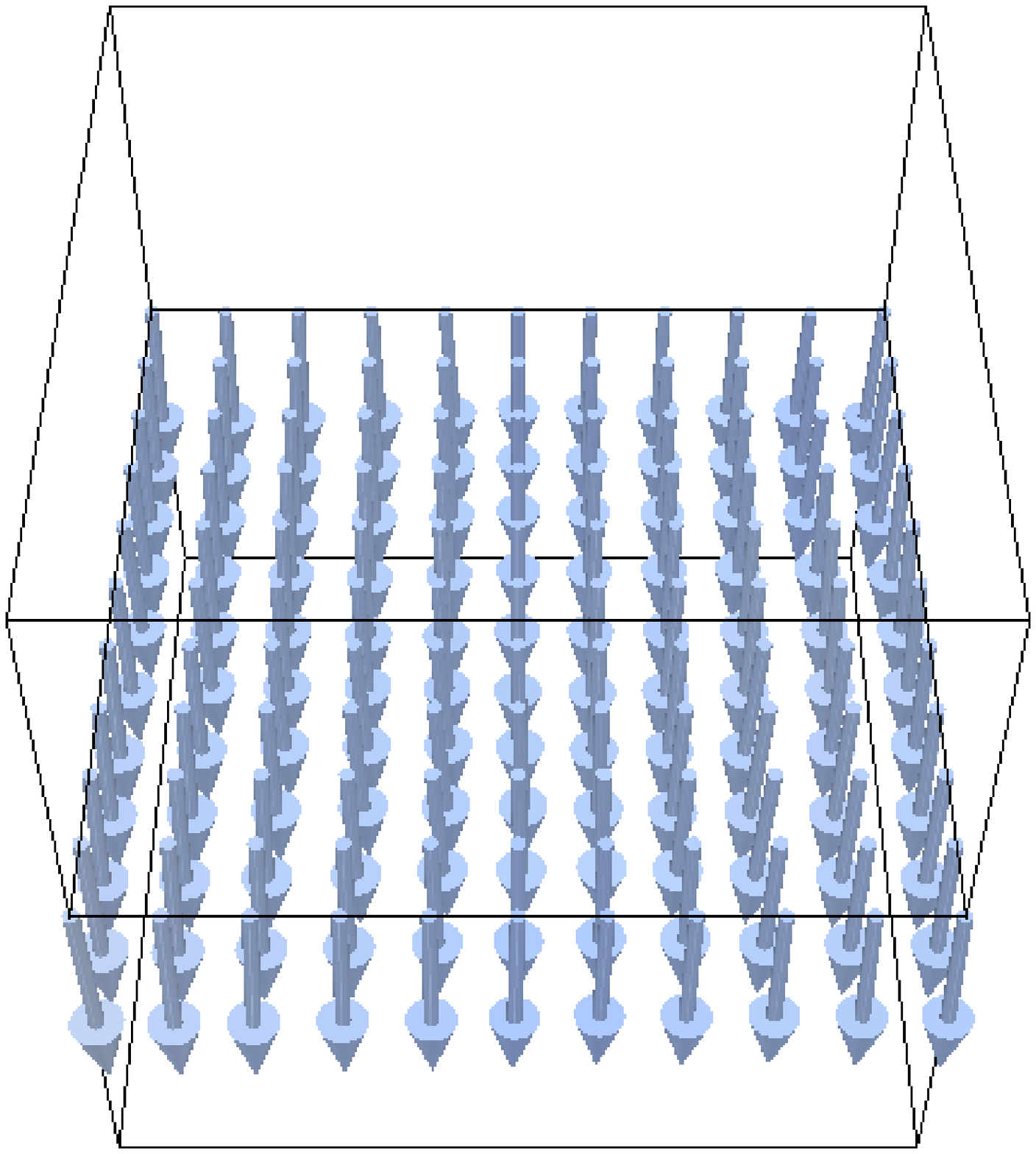}
\includegraphics[width=0.24\textwidth]{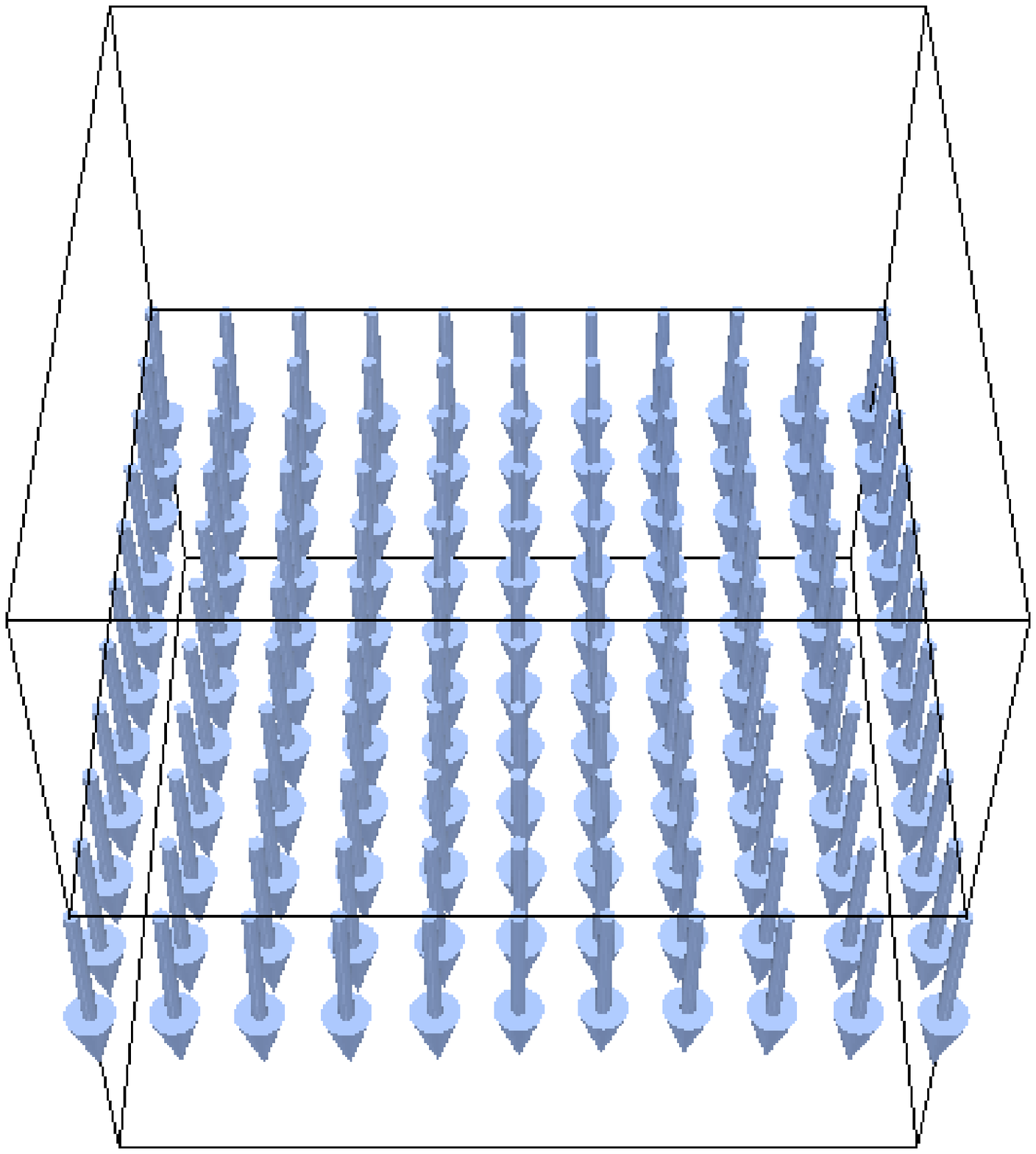}
\includegraphics[width=0.24\textwidth]{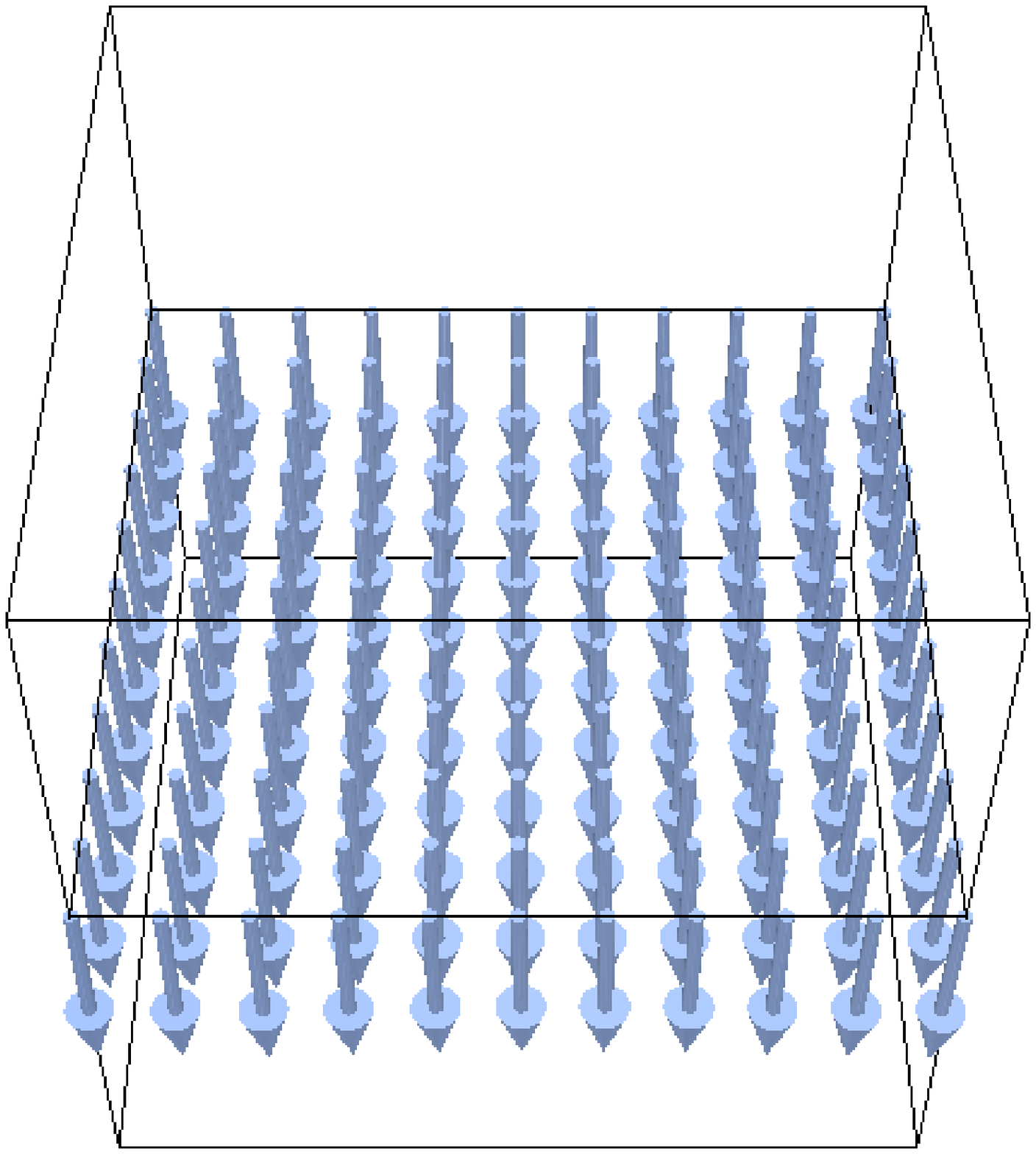}
\includegraphics[width=0.24\textwidth]{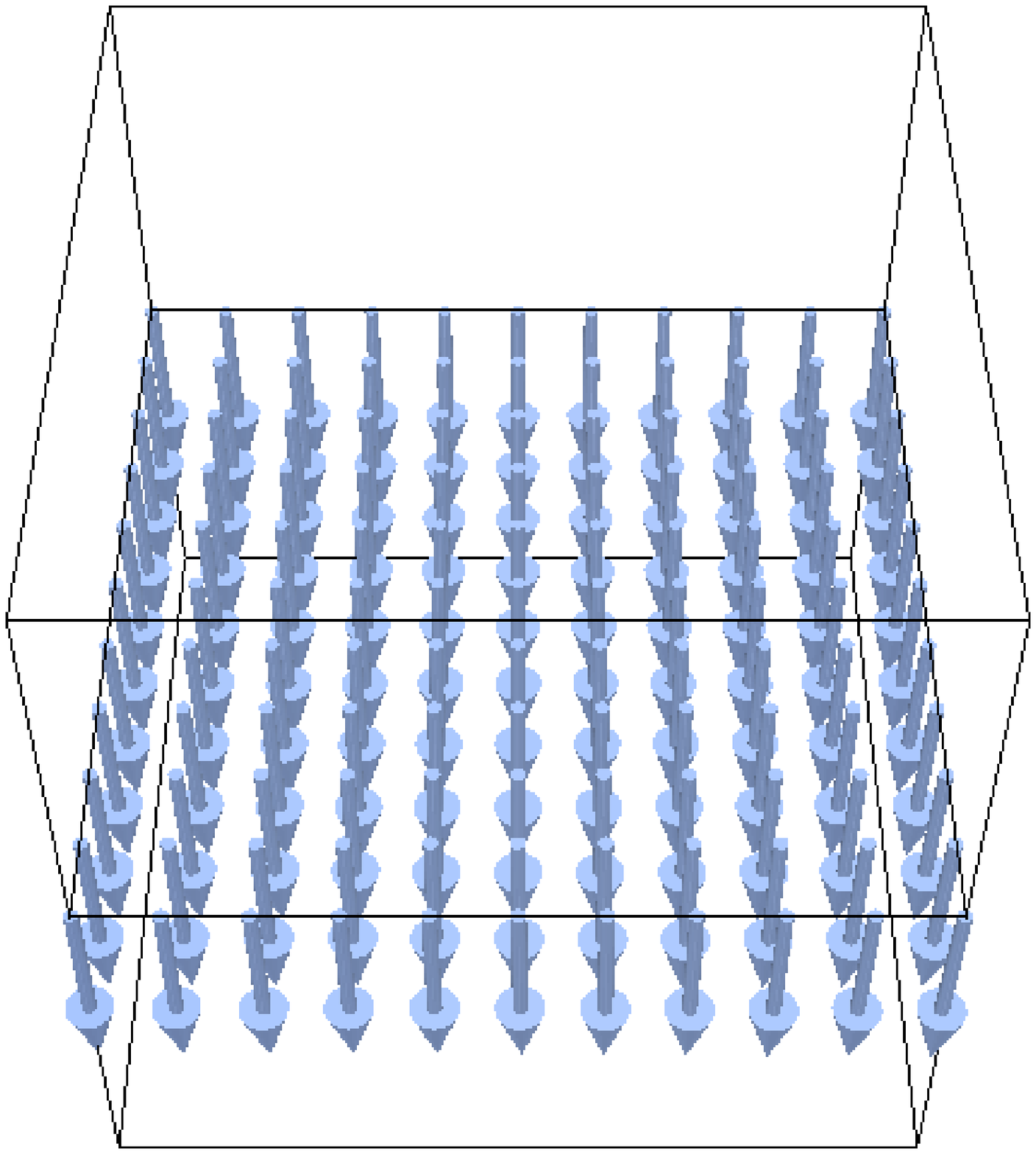}
\includegraphics[width=0.24\textwidth]{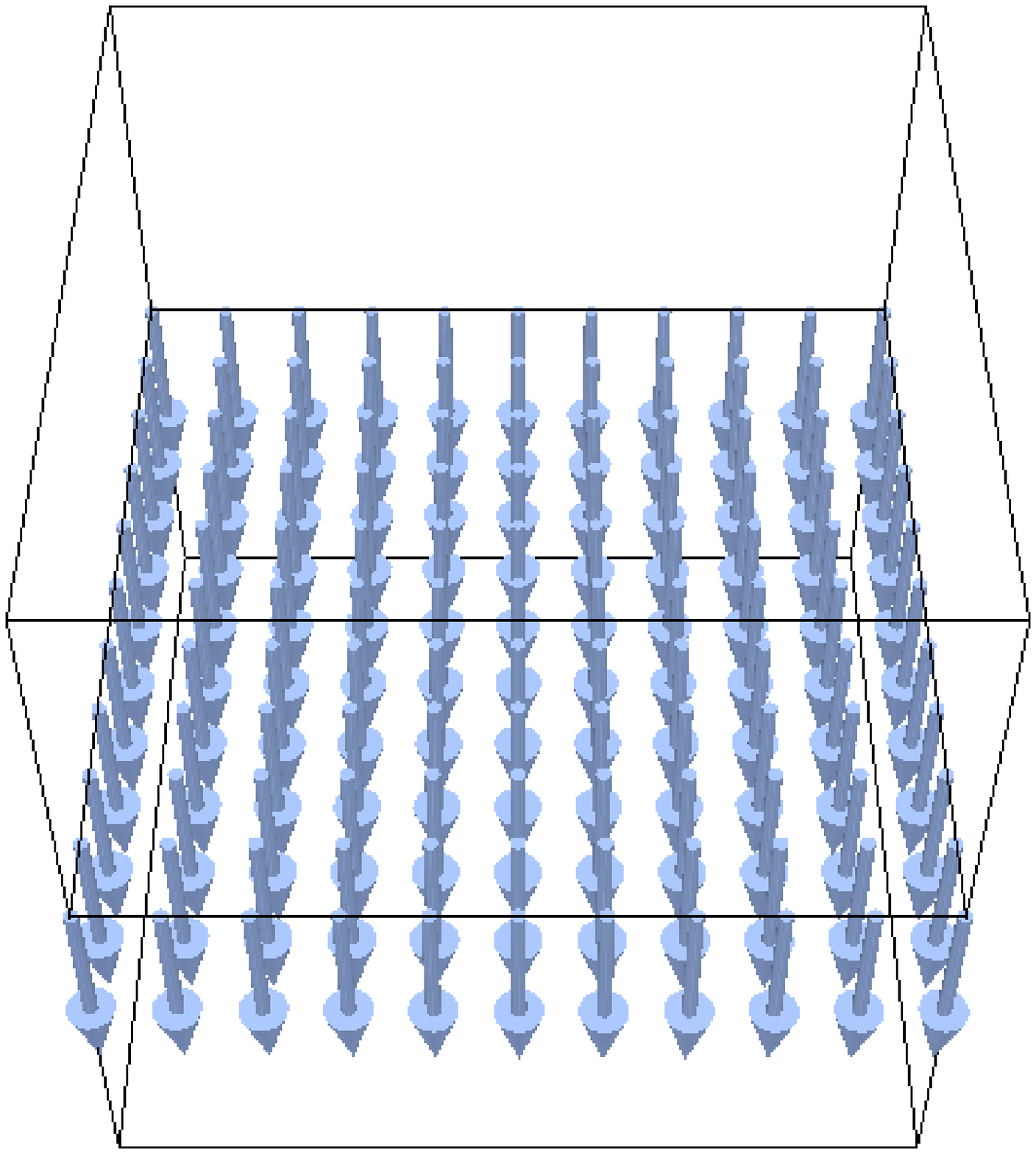}
\includegraphics[width=0.24\textwidth]{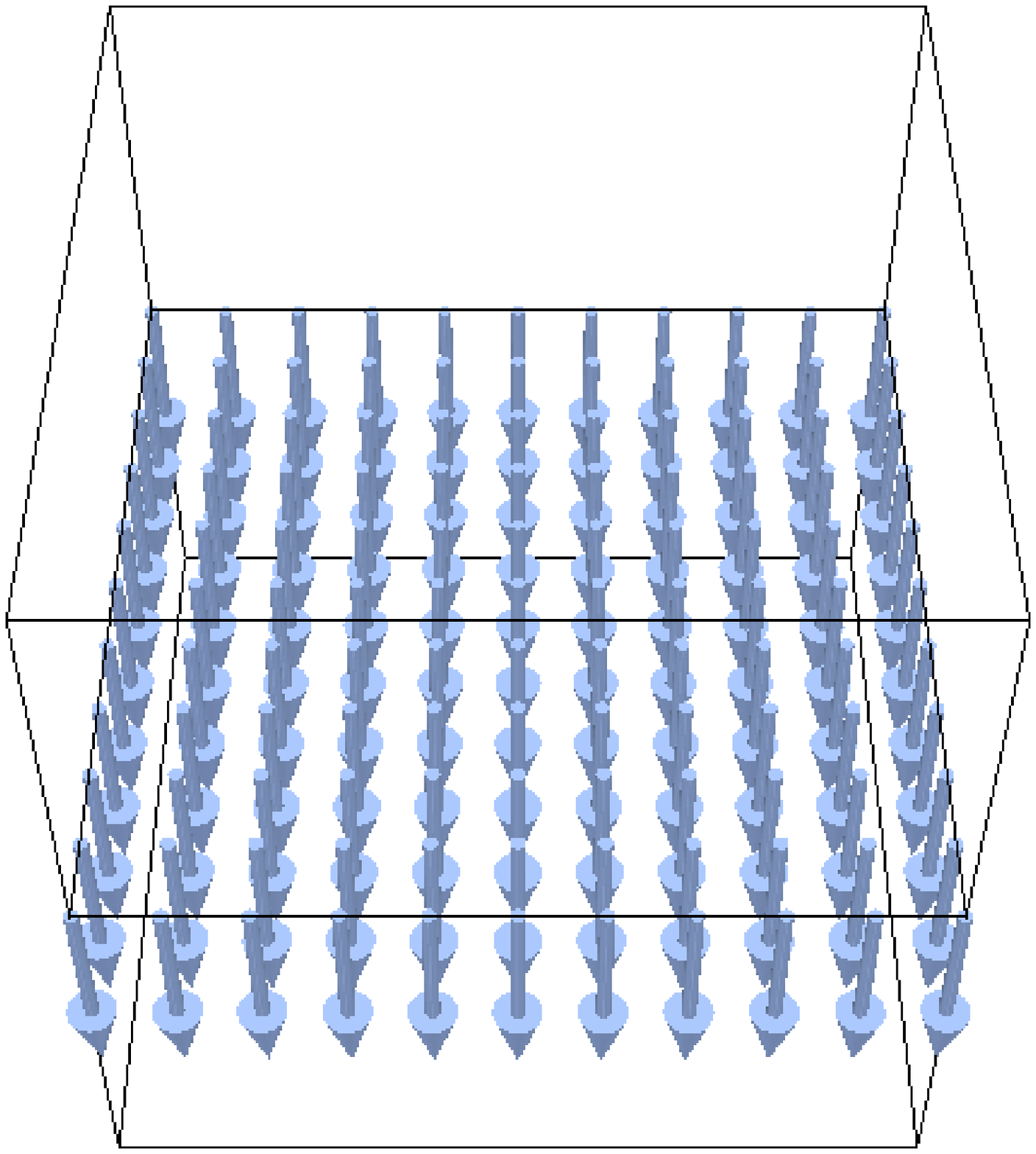}
\includegraphics[width=0.24\textwidth]{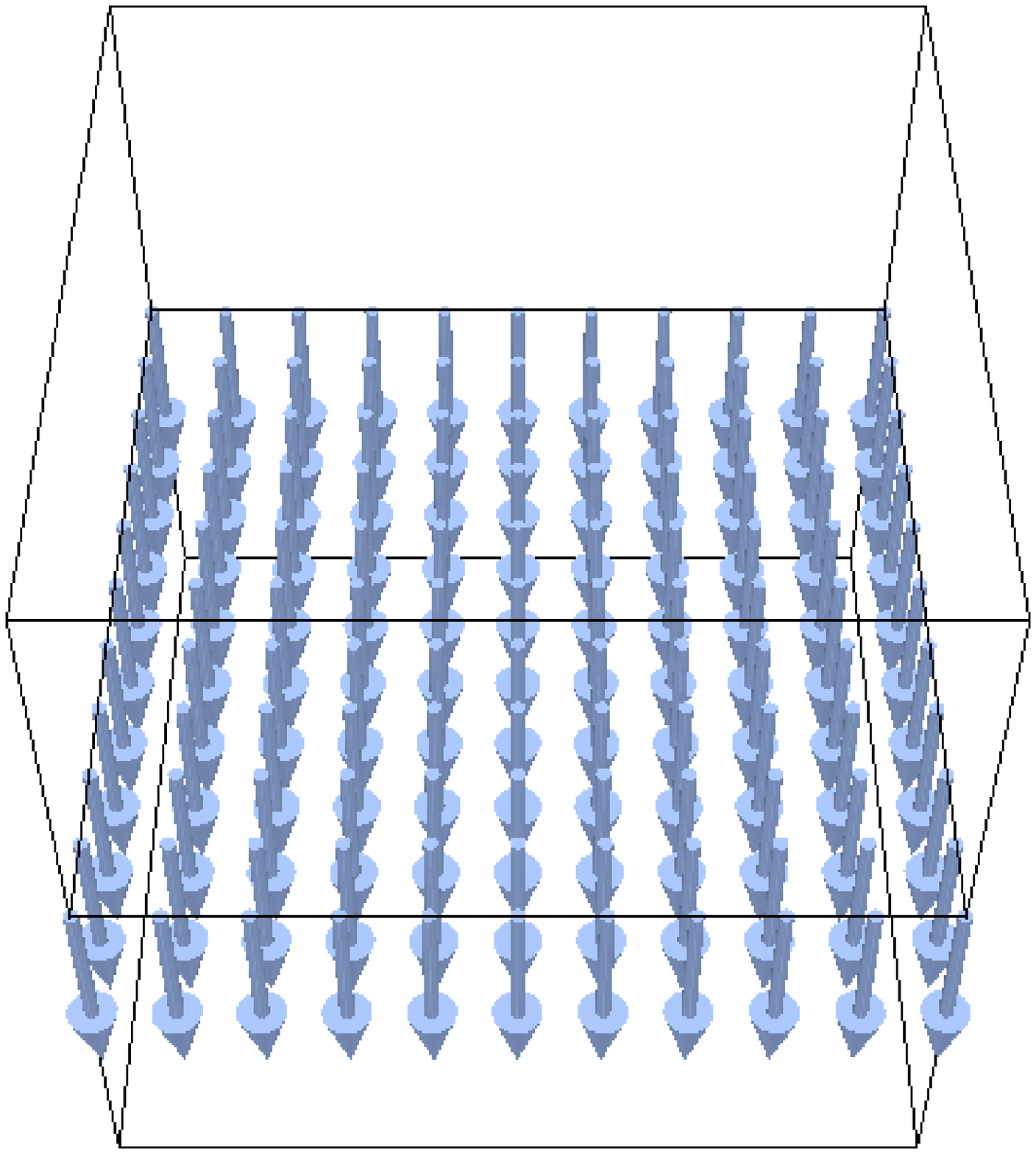}
\includegraphics[width=0.24\textwidth]{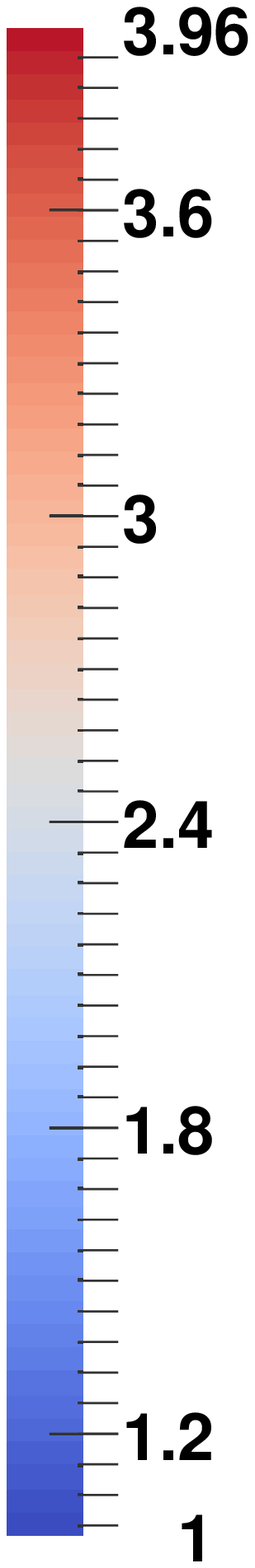}
\caption{Slice of the magnetic field $\bH_{hk}(t_i)$ at $[0,1]^2\times \{1/2\}$ for $i=0,\ldots,10$ with $t_i=0.2i$. The color of the vectors represents the magnitude $|\bH_{hk}|$.
We observe only a slight movement in the middle of the cube combined with an overall reduction of field strength.}
\label{fig:h}
\end{figure}

\bibliographystyle{myabbrv}
\bibliography{literature}

\begin{thebibliography}{10}

\bibitem{Abert_etal}
C.~Abert, G.~Hrkac, M.~Page, D.~Praetorius, M.~Ruggeri, and D.~Süss.
\newblock Spin-polarized transport in ferromagnetic multilayers: An
  unconditionally convergent {FEM} integrator.
\newblock {\em Comput. Math. Appl.},  {\bf 68} (2014), 639--654.

\bibitem{adams}
R.~A. Adams.
\newblock {\em Sobolev {S}paces}.
\newblock Academic Press [A subsidiary of Harcourt Brace Jovanovich,
  Publishers], New York-London, 1975.
\newblock Pure and Applied Mathematics, Vol. 65.

\bibitem{alouges}
F.~Alouges.
\newblock A new finite element scheme for {L}andau-{L}ifchitz equations.
\newblock {\em Discrete Contin. Dyn. Syst. Ser. S},  {\bf 1} (2008), 187--196.

\bibitem{alok}
F.~Alouges, E.~Kritsikis, J.~Steiner, and J.-C. Toussaint.
\newblock A convergent and precise finite element scheme for
  {L}andau-{L}ifschitz-{G}ilbert equation.
\newblock {\em Numer. Math.},  {\bf 128} (2014), 407--430.

\bibitem{as}
F.~Alouges and A.~Soyeur.
\newblock On global weak solutions for {L}andau-{L}ifshitz equations: existence
  and nonuniqueness.
\newblock {\em Nonlinear Anal.},  {\bf 18} (1992), 1071--1084.

\bibitem{afembem}
M.~Aurada, M.~Feischl, and D.~Praetorius.
\newblock Convergence of some adaptive {FEM}-{BEM} coupling for elliptic but
  possibly nonlinear interface problems.
\newblock {\em ESAIM Math. Model. Numer. Anal.},  {\bf 46} (2012), 1147--1173.

\bibitem{BBP}
L.~Ba{\v{n}}as, S.~Bartels, and A.~Prohl.
\newblock A convergent implicit finite element discretization of the
  {M}axwell--{L}andau--{L}ifshitz--{G}ilbert equation.
\newblock {\em SIAM J. Numer. Anal.},  {\bf 46} (2008), 1399--1422.

\bibitem{MLLG}
L.~Ba{\v{n}}as, M.~Page, and D.~Praetorius.
\newblock A convergent linear finite element scheme for the
  {M}axwell-{L}andau-{L}ifshitz-{G}ilbert equations.
\newblock {\em Electron. Trans. Numer. Anal.},  {\bf 44} (2015), 250--270.

\bibitem{bartels}
S.~Bartels.
\newblock Projection-free approximation of geometrically constrained partial
  differential equations.
\newblock {\em Math. Comp.},  {\bf } (2015).

\bibitem{bako}
S.~Bartels, J.~Ko, and A.~Prohl.
\newblock Numerical analysis of an explicit approximation scheme for the
  {L}andau-{L}ifshitz-{G}ilbert equation.
\newblock {\em Math. Comp.},  {\bf 77} (2008), 773--788.

\bibitem{bapr}
S.~Bartels and A.~Prohl.
\newblock Convergence of an implicit finite element method for the
  {L}andau-{L}ifshitz-{G}ilbert equation.
\newblock {\em SIAM J. Numer. Anal.},  {\bf 44} (2006), 1405--1419
  (electronic).

\bibitem{BL}
J.~Bergh and J.~L{\"o}fstr{\"o}m.
\newblock {\em Interpolation spaces. {A}n introduction}.
\newblock Springer-Verlag, Berlin-New York, 1976.
\newblock Grundlehren der Mathematischen Wissenschaften, No. 223.

\bibitem{dual}
A.~Bossavit.
\newblock Two dual formulations of the {$3$}-{D} eddy-currents problem.
\newblock {\em COMPEL},  {\bf 4} (1985), 103--116.

\bibitem{buffa}
A.~Buffa and P.~Ciarlet, Jr.
\newblock On traces for functional spaces related to {M}axwell's equations.
  {I}. {A}n integration by parts formula in {L}ipschitz polyhedra.
\newblock {\em Math. Methods Appl. Sci.},  {\bf 24} (2001), 9--30.

\bibitem{buffa2}
A.~Buffa and P.~Ciarlet, Jr.
\newblock On traces for functional spaces related to {M}axwell's equations.
  {II}. {H}odge decompositions on the boundary of {L}ipschitz polyhedra and
  applications.
\newblock {\em Math. Methods Appl. Sci.},  {\bf 24} (2001), 31--48.

\bibitem{CarFab98}
G.~Carbou and P.~Fabrie.
\newblock Time average in micromagnetism.
\newblock {\em J. Differential Equations},  {\bf 147} (1998), 383--409.

\bibitem{CimExist}
I.~Cimr{\'a}k.
\newblock Existence, regularity and local uniqueness of the solutions to the
  {M}axwell--{L}andau--{L}ifshitz system in three dimensions.
\newblock {\em J. Math. Anal. Appl.},  {\bf 329} (2007), 1080--1093.

\bibitem{cim}
I.~Cimr{\'a}k.
\newblock A survey on the numerics and computations for the {L}andau-{L}ifshitz
  equation of micromagnetism.
\newblock {\em Arch. Comput. Methods Eng.},  {\bf 15} (2008), 277--309.

\bibitem{mumag}
CTCMS.
\newblock Mmmg: Micromagnetic {M}odeling {A}ctivity {G}roup.
\newblock {\em http://www.ctcms.nist.gov/~rdm/mumag.org.html},  {\bf }.

\bibitem{FeiTraII}
M.~Feischl and T.~Tran.
\newblock The eddy current--{LLG} equations -- {P}art {II}: A priori error
  estimates.
\newblock {R}esearch {R}eport, UNSW, The University of New South Wales, 2016.

\bibitem{Gil}
T.~Gilbert.
\newblock A {L}agrangian formulation of the gyromagnetic equation of the
  magnetic field.
\newblock {\em Phys Rev},  {\bf 100} (1955), 1243--1255.

\bibitem{KruzikProhl06}
M.~Kru{\v{z}}{\'{\i}}k and A.~Prohl.
\newblock Recent developments in the modeling, analysis, and numerics of
  ferromagnetism.
\newblock {\em SIAM Rev.},  {\bf 48} (2006), 439--483.

\bibitem{LL}
L.~Landau and E.~Lifschitz.
\newblock On the theory of the dispersion of magnetic permeability in
  ferromagnetic bodies.
\newblock {\em Phys Z Sowjetunion},  {\bf 8} (1935), 153--168.

\bibitem{ellg}
K.-N. Le, M.~Page, D.~Praetorius, and T.~Tran.
\newblock On a decoupled linear {FEM} integrator for eddy-current-{LLG}.
\newblock {\em Appl. Anal.},  {\bf 94} (2015), 1051--1067.

\bibitem{thanh}
K.-N. Le and T.~Tran.
\newblock A convergent finite element approximation for the quasi-static
  {M}axwell-{L}andau-{L}ifshitz-{G}ilbert equations.
\newblock {\em Comput. Math. Appl.},  {\bf 66} (2013), 1389--1402.

\bibitem{Lio69}
J.~L. Lions.
\newblock {\em Quelques M\'ethodes de R\'esolution des Probl\`emes aux Limites
  Non Lin\'eaires}.
\newblock Dunod Gauthier-Villars, Paris, 1969.

\bibitem{fenics}
A.~Logg, K.-A. Mardal, and G.~N. Wells, editors.
\newblock {\em Automated solution of differential equations by the finite
  element method}, volume~84 of {\em Lecture Notes in Computational Science and
  Engineering}.
\newblock Springer, Heidelberg, 2012.
\newblock The FEniCS book.

\bibitem{Monk03}
P.~Monk.
\newblock {\em Finite {E}lement {M}ethods for {M}axwell's equations}.
\newblock Numerical Mathematics and Scientific Computation. Oxford University
  Press, New York, 2003.

\bibitem{prohl2001}
A.~Prohl.
\newblock {\em Computational Micromagnetism}.
\newblock Advances in Numerical Mathematics. B. G. Teubner, Stuttgart, 2001.

\bibitem{bempp}
W.~{\'S}migaj, T.~Betcke, S.~Arridge, J.~Phillips, and M.~Schweiger.
\newblock Solving boundary integral problems with {BEM}++.
\newblock {\em ACM Trans. Math. Software},  {\bf 41} (2015), Art. 6, 40.

\bibitem{vis}
A.~Visintin.
\newblock On {L}andau-{L}ifshitz' equations for ferromagnetism.
\newblock {\em Japan J. Appl. Math.},  {\bf 2} (1985), 69--84.

\end{thebibliography}
\end{document}